\newcommand{\de}{\mathrm{d}}
\begin{document}

\title{Nonconforming approximation methods for function reconstruction on general polygonal meshes via orthogonal polynomials
}

\titlerunning{Nonconforming approximation methods for function reconstruction}        
\author{Francesco Dell'Accio \and Allal Guessab \and
        Gradimir V. Milovanovi{\'c} \and 
        Federico Nudo
}


\institute{            Francesco Dell'Accio \at
             Department of Mathematics and Computer Science, University of Calabria, Rende (CS), Italy\\ 
             \email{francesco.dellaccio@unical.it} 
         \and 
Allal Guessab \at
             Avenue Al Walae, 73000, 
Dakhla, Morocco\\  \email{guessaballal7@gmail.com} 
         \and
             Gradimir V. Milovanovi{\'c} \at
              Serbian Academy of Sciences and Arts (SASA), 11000 Belgrade, Serbia \& University of Ni\v s, Faculty of Sciences and Mathematics, 18000 Ni\v s, Serbia \\
              \email{gvm@mi.sanu.ac.rs}
         \and 
            Federico Nudo \at
             Department of Mathematics and Computer Science, University of Calabria, Rende (CS), Italy \\
              \email{federico.nudo@unical.it}
}

\date{Version: August 09, 2025}

\maketitle

\begin{abstract}
In this work, we introduce new families of nonconforming approximation methods for reconstructing functions on general polygonal meshes. These methods are defined using degrees of freedom based on weighted moments of orthogonal polynomials and can  reproduce higher-degree polynomials. 
This setting naturally arises in applications where pointwise evaluations are unavailable and only integral measurements over subdomains are accessible. We develop a unisolvence theory and derive necessary and sufficient conditions for the associated approximation spaces to be unisolvent. Specifically, it
is shown that unisolvence depends on the parity of the product of the polynomial degree~$m$ and the number of polygon edges~$N$. When this condition is not satisfied, we introduce an enrichment strategy involving an additional linear functional and a suitably designed enrichment function to ensure unisolvence. Numerical experiments confirm the accuracy of the proposed method.
\end{abstract}

\keywords{Orthogonal polynomials\and Function reconstruction\and  Enriched nonconforming approximations\and Gegenbauer-Lobatto quadrature rule.}
\subclass{65D05}

\section{Introduction}
The reconstruction of unknown functions from integral data is a fundamental problem in approximation theory and numerical analysis. In many practical scenarios, direct pointwise evaluations of the target function are unavailable and only integral measurements over subdomains can be accessible. Such scenarios arise naturally in experimental physics, medical imaging and inverse problems~\cite{kak2001principles,palamodov2016reconstruction,recor1,recor2,guessab2019extended}. A prominent example is tomographic imaging, where the available data consist of line integrals of the unknown function along prescribed paths~\cite{natterer2001mathematics,palamodov2016reconstruction}. A classical and versatile strategy for such problems is to approximate the function by piecewise polynomial spaces defined over a partition of the computational domain $\Omega \subset \mathbb{R}^d$, $d \ge 1$. Local polynomial approximations are constructed on the subdomains and subsequently assembled into a global approximation. If the resulting global function is continuous across the interfaces, the scheme is called \emph{conforming}; otherwise, it is \emph{nonconforming}. Nonconforming approximation schemes often provide greater flexibility, particularly when continuity across element boundaries is not strictly required. These methods naturally fit problems where the available data consist of integral measurements rather than pointwise samples. Recently, enriched nonconforming approximation schemes have been successfully employed for function reconstruction from integral data~\cite{DellAccio2025new,dell2025truncated}. However, the low polynomial degree of standard approximation spaces may limit their effectiveness, especially when dealing with functions exhibiting sharp gradients or oscillatory behavior. A natural way to improve their approximation capabilities is to use higher-order polynomial spaces enriched with suitably chosen enrichment functions and additional degrees of freedom~\cite{nudosolo,nudosolo2,DellAccioCANWA}.

In this work, we introduce \emph{new families of nonconforming approximation methods that reproduce higher degree polynomials} on general polygonal meshes, aimed at solving function reconstruction problems from weighted integral data. The construction relies on edge degrees of freedom defined via orthogonal polynomials. We perform a detailed unisolvence analysis and show that the unisolvence of the associated approximation space depends on the parity of the product of the polynomial degree~$m$ and the number of polygon edges~$N$. When this condition is not satisfied, we propose an \emph{enrichment strategy} that enhances the approximation space with an additional linear functional and a suitably designed enrichment function to ensure unisolvence.

The paper is organized as follows. In Section~\ref{sec1}, we introduce boundary polynomial spaces defined by weighted edge moments and develop the main tools for the unisolvence analysis, including a general characterization theorem based on the parity of the product $mN$. We also distinguish between the cases of even and odd polynomial degree~$m$, and provide explicit necessary and sufficient conditions in each scenario. In Section~\ref{sec2}, we present an enrichment strategy to address cases where the parity condition is not satisfied, aiming to ensure unisolvence. In Section~\ref{sec3}, we introduce new families of nonconforming approximation methods of arbitrary order~$m$, combining the theoretical framework with explicit definitions of the degrees of freedom. Finally, in Section~\ref{sec4}, we provide numerical experiments on function reconstruction problems, which confirm the accuracy of the proposed method across different polynomial degrees.

\section{Boundary element triple based on orthogonal polynomials}
\label{sec1}

Let $\omega$ be a nonnegative, integrable function defined on an interval $[a,b]$, which may be either bounded or unbounded. We define the $n$-th moment of $\omega$ as
\begin{equation*}
    \mu_n = \int_a^b x^n \omega(x) {\de}x, \quad n \in \mathbb{N}_0=\mathbb{N}\cup\{0\}.
\end{equation*}
We assume that $\omega$ satisfies the following conditions:
\begin{itemize}
    \item the total mass is strictly positive
    \begin{equation*}
        \int_a^b \omega(x) {\de}x > 0;
    \end{equation*}
    \item for every $n \in \mathbb{N}_0$, the $n$-th moment is finite, i.e.
    \begin{equation*}
        \mu_n < \infty.
    \end{equation*}
\end{itemize}

A function $\omega$ satisfying these conditions is called a \emph{weight function}~\cite{chihara2011introduction}. A sequence of polynomials $\left\{\pi_n\right\}_{n \in \mathbb{N}_0}$ is said to be \emph{orthogonal on $[a,b]$ with respect to $\omega$} if
\begin{itemize}
    \item each $\pi_n$ is a polynomial of exact degree $n$, that is,
    \begin{equation*}
\operatorname{deg}\left(\pi_n\right)=n, \quad n\in\mathbb{N}_0; 
    \end{equation*}
    \item the weighted inner product 
\begin{equation*}
    \left\langle p, q \right\rangle_{\omega}=\int_{a}^b p(x) q(x) \omega(x) {\de}x, \quad   p,q\in\mathbb{P}([a,b])
\end{equation*}
    satisfies 
    \begin{equation*}
        \left\langle \pi_m, \pi_n \right\rangle_{\omega} = K_n \delta_{m n}, \quad  m,n\in\mathbb{N}_0,
    \end{equation*}
  where $K_n \neq 0$ and $\delta_{mn}$ denotes the Kronecker delta symbol. Here, $\mathbb{P}([a,b])$ is the space of real polynomials on $[a,b]$. 
\end{itemize}
If, in addition, $K_n=1$ for any $n\in\mathbb{N}_0$, the sequence $\left\{\pi_n\right\}_{n\in\mathbb{N}_0}$ is said to be \emph{orthonormal on $[a,b]$ with respect to $\omega$}. 

Let $K \subset \mathbb{R}^2$ be a polygon with vertices $\mathbf{v}_1, \dots, \mathbf{v}_N$ ordered counter-clockwise and let the edges be defined by
\begin{equation*}
  e_i = \left[\mathbf v_i,\mathbf v_{i+1}\right], \quad i=1,\dots,N,    
\end{equation*}
with the convention that $ \mathbf{v}_{N+1}=\mathbf v_1$. We introduce the function space
\begin{equation*}
    \mathcal{U}(\partial K)=\left\{u\in L^2(\partial K)\, :\, u_{{\mkern 1mu \vrule height 2ex\mkern2mu e_i}}\in C^0\left(e_i\right),\, i=1,\dots,N \right\}
\end{equation*}
that is, the space of square-integrable functions on $\partial K$  whose restrictions to each edge $e_i$ are continuous functions.
\begin{figure}
\centering
\includegraphics[width=\linewidth]{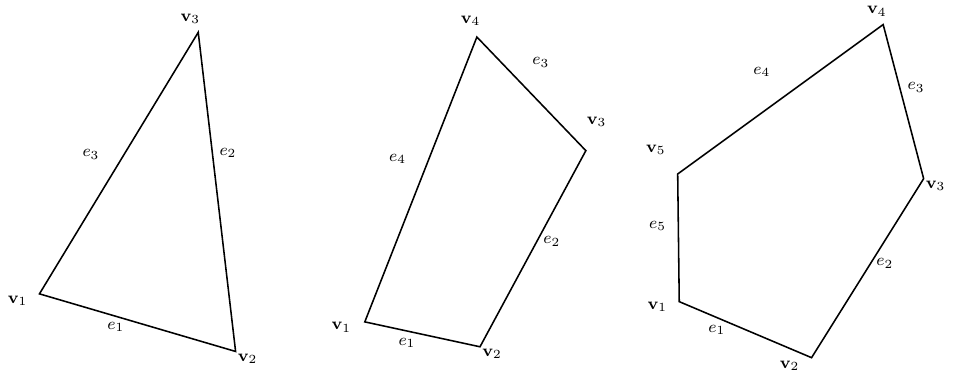}
  \caption{Plot of the polygon $K$ with $N=3$ (left), $N=4$ (center) and $N=5$ (right).}
    \label{fig:poligoni} 	
\end{figure}

For any $i = 1, \dots, N$, we define the restriction operator
\begin{equation*}
\gamma_{i}:u\in \mathcal{U}(\partial K)\rightarrow u_{|_{e_i}}\in C^0\left(e_i\right),   
\end{equation*}
which maps each function $u \in \mathcal{U}(\partial K)$ to its restriction on the edge $e_i$. For simplicity, we parametrize each edge $e_i$ using the affine map
\begin{equation*}
    \psi_i : x \in [a,b] \rightarrow \frac{b - x}{b - a} \mathbf{v}_{i} + \frac{x - a}{b - a} \mathbf{v}_{i+1}\in e_i
    \end{equation*}
and, for any $u \in \mathcal{U}(\partial K)$, we set
\begin{equation*}
    \widetilde{\gamma}_i(u)(x)=\gamma_i(u)\left(\psi_i(x)\right), \quad i = 1, \dots, N, \quad x\in[a,b].
\end{equation*}
Then, by construction
\begin{equation*}
   \widetilde{\gamma}_i(u)(a)=\gamma_i(u)\left(\mathbf{v}_{i}\right), \quad  \widetilde{\gamma}_i(u)(b)=\gamma_i(u)\left(\mathbf{v}_{i+1}\right).
   \end{equation*}

We now introduce the spaces of edgewise polynomial functions
\begin{equation*}
    \mathcal{S}_m(\partial K)=\left\{f\in L^2(\partial K) \, :\, f_{|_{e_i}}\in\mathbb{P}_m\left(e_i\right), \, i=1,\dots,N\right\},
\end{equation*}
and 
\begin{equation*}
    \mathcal{S}_m^0(\partial  K)=\mathcal{S}_m(\partial K)\cap C^0(\partial K).
\end{equation*}
In particular, $\mathcal{S}_m^0(\partial K)$ denotes the space of globally continuous functions on $\partial K$ whose restrictions to each edge are polynomials of degree at most $m$.  Here, $\mathbb{P}_m\left(e_i\right)$ denotes the space of real-valued polynomials of degree at most $m$, defined on $e_i$. Similarly, in the following, we denote by $\mathbb{P}_n([a,b])$ the space of real-valued polynomials of degree at most $n$ on the interval $[a,b]$, that is
\begin{equation*}
\mathbb{P}_n([a,b])= \left\{ p\in\mathbb{P}([a,b])\, :\, \deg(p) \leq n \right\}.    
\end{equation*}

Given a basis of the space $\mathcal{S}_{m-1}(\partial K)$
\begin{equation*}
    \mathcal{B}_m=\left\{b_j \, :\, j=1,\dots,mN\right\},
\end{equation*}
we introduce the set
\begin{equation}   \label{partialK}
    \Sigma_{\partial K}=\left\{ \mathcal{L}_j:f\in \mathcal{S}_m^0(\partial  K)\rightarrow \sum_{i=1}^N\int_{a}^b \widetilde{\gamma}_i\left(f\right)(x)\widetilde{\gamma}_i\left(b_j\right)(x)\omega(x) {\de}x \, :\, j=1,\dots,mN\right\},
\end{equation}
where $\omega$ is a weight function on $[a,b]$. Then, we define the boundary element triple as
\begin{equation}\label{triple}
\mathcal{A}_{m,N}=\left(\partial K,  \mathcal{S}_m^0(\partial  K), \Sigma_{\partial K}\right).
\end{equation}

\begin{remark}
   We observe that
\begin{equation*}
\operatorname{card}\left(\Sigma_{\partial K}\right)=\dim \left( \mathcal{S}_m^0(\partial  K)\right)=mN.
\end{equation*}
\end{remark}

In what follows, we distinguish between the general interval $[a,b]$ and the reference interval $[-1,1]$ by using the variables $x \in [a,b]$ and $t \in [-1,1]$, respectively.

\subsection{The general case} 

We present a characterization theorem that establishes a necessary and sufficient condition on the domain $K$ and the order $m$ under which the triple~\eqref{triple} defines a finite element.

\begin{theorem}\label{th1}
Let $K$ be a polygon with $N$ edges, and let $\left\{\pi_n\right\}_{n \in \mathbb{N}_0}$ be a sequence of orthogonal polynomials on $[a,b]$ with respect to a weight function $\omega$. For a fixed $m \in \mathbb{N}$, the following statements are equivalent:
    \begin{enumerate}
        \item[$1^\circ$] The polynomial $\pi_m$ satisfies
        \begin{equation}\label{cond1}
            \left( \frac{\pi_m(b)}{\pi_m(a)} \right)^N \neq 1;
        \end{equation}
        \item[$2^\circ$] The triple $\mathcal{A}_{m,N}$ defines a finite element.
    \end{enumerate}
\end{theorem}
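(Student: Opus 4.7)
The plan is to use the dimension identity $\dim \mathcal{S}_m^0(\partial K) = \operatorname{card}(\Sigma_{\partial K}) = mN$, which reduces unisolvence of $\mathcal{A}_{m,N}$ to the injectivity of $f \mapsto (\mathcal{L}_1(f),\dots,\mathcal{L}_{mN}(f))$. Since each $\mathcal{L}_j$ depends linearly on $b_j$ and $\{b_1,\dots,b_{mN}\}$ is a basis of $\mathcal{S}_{m-1}(\partial K)$, the kernel condition $\mathcal{L}_j(f)=0$ for all $j$ is equivalent to
\begin{equation*}
\sum_{i=1}^N \int_{a}^{b} \widetilde{\gamma}_i(f)(x)\,\widetilde{\gamma}_i(b)(x)\,\omega(x)\,\de x = 0, \qquad \forall\, b \in \mathcal{S}_{m-1}(\partial K).
\end{equation*}

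Next I would localize on a single edge. Because the space $\mathcal{S}_{m-1}(\partial K)$ carries no continuity constraint, for each fixed $i$ and each $q \in \mathbb{P}_{m-1}([a,b])$ there exists $b \in \mathcal{S}_{m-1}(\partial K)$ with $\widetilde{\gamma}_i(b) = q$ and $\widetilde{\gamma}_k(b) = 0$ for $k \neq i$. Substituting this test function yields
\begin{equation*}
\int_{a}^{b} \widetilde{\gamma}_i(f)(x)\,q(x)\,\omega(x)\,\de x = 0, \qquad \forall\, q \in \mathbb{P}_{m-1}([a,b]),
\end{equation*}
so $\widetilde{\gamma}_i(f) \in \mathbb{P}_m([a,b])$ is $\omega$-orthogonal to $\mathbb{P}_{m-1}([a,b])$. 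By the uniqueness (up to a scalar) of the $m$-th orthogonal polynomial, this forces $\widetilde{\gamma}_i(f) = c_i\,\pi_m$ for some $c_i \in \mathbb{R}$, for every $i = 1,\dots,N$.

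Finally, I would exploit the continuity of $f \in \mathcal{S}_m^0(\partial K)$ at each vertex $\mathbf{v}_{i+1}$, which gives $\widetilde{\gamma}_i(f)(b) = \widetilde{\gamma}_{i+1}(f)(a)$ and therefore $c_i\,\pi_m(b) = c_{i+1}\,\pi_m(a)$ for $i = 1,\dots,N$, with the cyclic convention $c_{N+1}=c_1$. Since the zeros of any orthogonal polynomial on $[a,b]$ lie in the open interval $(a,b)$, both $\pi_m(a)$ and $\pi_m(b)$ are nonzero, so $r := \pi_m(b)/\pi_m(a)$ is well defined and nonzero. The recursion reads $c_{i+1} = r\,c_i$, and cycling around the $N$ edges yields $c_1 = r^N c_1$, i.e.\ $(r^N - 1)\,c_1 = 0$. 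Thus the kernel is trivial if and only if $r^N \neq 1$, which is exactly \eqref{cond1}, proving both implications simultaneously.

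I expect the main obstacle to be purely expository rather than technical: cleanly packaging the passage from the finite spanning set $\{b_j\}$ to an arbitrary $b \in \mathcal{S}_{m-1}(\partial K)$, justifying the edge-localized test functions (which requires the absence of continuity in $\mathcal{S}_{m-1}(\partial K)$, in contrast to $\mathcal{S}_m^0(\partial K)$), and correctly invoking the classical fact that the roots of $\pi_m$ are contained in $(a,b)$ to ensure that the ratio $\pi_m(b)/\pi_m(a)$ is a legitimate nonzero number.
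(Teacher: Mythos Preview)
Your proposal is correct and follows essentially the same route as the paper: reduce the annihilation conditions to edgewise $\omega$-orthogonality against $\mathbb{P}_{m-1}$, deduce $\widetilde{\gamma}_i(f)=c_i\pi_m$, and use the cyclic vertex-continuity constraints to obtain $(r^N-1)c_1=0$. The only cosmetic differences are that you treat both implications at once via a clean characterization of the kernel, and you make explicit why $\pi_m(a),\pi_m(b)\ne 0$ (zeros of $\pi_m$ lie in $(a,b)$), a point the paper uses tacitly in this proof.
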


\begin{proof}
First, we assume that $\pi_m$ satisfies the condition~\eqref{cond1} and we prove that $\mathcal{A}_{m,N}$ defines a finite element. To this end, let $p \in  \mathcal{S}_m^0(\partial  K)$ be such that
\begin{equation}\label{conds}
    \mathcal{L}_j(p) = 0, \quad j=1,\dots,mN. 
\end{equation}
We aim to show that $p=0$.

We notice that, conditions~\eqref{conds} are equivalent to requiring that
\begin{equation*}
  (\forall\, q \in \mathbb{P}_{m-1}([a,b]))\quad  \int_{a}^{b} \widetilde{\gamma}_i(p)(x)q(x) \omega(x) {\de}x = 0, \quad i = 1, \dots, N.
\end{equation*}
Therefore, for any $i = 1, \ldots, N$, there exists a constant $k_i \in \mathbb{R}$ such that
\begin{equation*}
    \widetilde{\gamma}_i(p) = k_i \pi_m.
\end{equation*}

Since $p \in  \mathcal{S}_m^0(\partial  K)$ is continuous, continuity at the shared vertex between adjacent edges $e_i$ and $e_{i+1}$ implies
\begin{equation*}
    \widetilde{\gamma}_i(p)(b) = k_i \pi_m(b) = k_{i+1} \pi_m(a) = \widetilde{\gamma}_{i+1}(p)(a),
\end{equation*}
which yields
\begin{equation*}
    k_{i+1} = \frac{\pi_m(b)}{\pi_m(a)} k_i, \quad i=1,\dots,N-1.
\end{equation*}
By iterating this relation, we obtain
\begin{equation}\label{Kn}
    k_i = \left( \frac{\pi_m(b)}{\pi_m(a)} \right)^{i-1} k_1, \quad i = 2, \dots, N.
\end{equation}

Imposing the continuity also between the edges $e_N$ and $e_1$, we have 
\begin{equation*}
\widetilde{\gamma}_N(p)(b)=    \widetilde{\gamma}_1(p)(a),
\end{equation*}
which yields
\begin{equation*}
k_N \pi_m(b)=k_1 \pi_m(a).
\end{equation*}
Thus,
\begin{equation}\label{aus111}
    k_N=\frac{\pi_m(a)}{\pi_m(b)}k_1. 
\end{equation}
On the other hand, from~\eqref{Kn}, we also have
\begin{equation}\label{ausmaaa}
    k_N=\left(\frac{\pi_m(b)}{\pi_m(a)}\right)^{N-1}k_1.
\end{equation}
Combining~\eqref{aus111} and~\eqref{ausmaaa}, we obtain 
\begin{equation*}
    k_1 \left[\left( \frac{\pi_m(b)}{\pi_m(a)} \right)^{N}-1\right]=0.
\end{equation*}  
Since $\pi_m$ satisfies~\eqref{cond1}, we conclude that $k_1 = 0$. From~\eqref{Kn}, it follows that  
\begin{equation*}
    k_2 = \cdots = k_N = 0.
\end{equation*}  
Consequently, $\widetilde{\gamma}_i(p) = 0$, for any $i = 1, \ldots, N$, which implies $p = 0$. Hence, $\mathcal{A}_{m,N}$ defines a finite element.

Conversely, we suppose by contradiction that $\pi_m$ satisfies
\begin{equation*}
    \left( \frac{\pi_m(b)}{\pi_m(a)} \right)^N = 1.
\end{equation*}
We will show that, in this case, the triple $\mathcal{A}_{m,N}$ does not define a finite element. Indeed, for any $k_1 \neq 0$, by defining $k_i$ according to~\eqref{Kn}, we can construct a nonzero function $p \in  \mathcal{S}_m^0(\partial  K)$ such that
\begin{equation*}
    \mathcal{L}_j(p) = 0, \quad j=1,\dots,mN.
\end{equation*}
Therefore, the space
\begin{equation*}
    \left\{ p \in  \mathcal{S}_m^0(\partial  K) \, : \, \mathcal{L}_j(p) = 0, \, j = 1, \ldots, mN \right\}
\end{equation*}
has dimension one. As a result, the unisolvence property fails and the triple $\mathcal{A}_{m,N}$ does not define a finite element. 
This concludes the proof.  
\end{proof}

In what follows, we present two corollaries illustrating how condition~\eqref{cond1} can be reformulated in specific settings:
\begin{itemize}
    \item first, using orthogonal polynomials defined on a symmetric interval with respect to an even weight function;
    \item second, using Jacobi polynomials.
\end{itemize}

\begin{corollary}[\textbf{The even weight}]\label{cor:symmcase}   Let $K$ be a polygon with $N$ edges, and let $\left\{\pi_n\right\}_{n \in \mathbb{N}_0}$ be a sequence of orthogonal polynomials on $[-a,a]$, $a>0,$ with respect to an even weight function $\omega$. For a fixed $m \in \mathbb{N}$, the following statements are equivalent:
\begin{enumerate}
    \item[$1^\circ$] $mN$ is odd;
    \item[$2^\circ$] The triple $\mathcal{A}_{m,N}$ defines a finite element.
\end{enumerate}
\end{corollary}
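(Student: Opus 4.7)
The plan is to apply Theorem~\ref{th1} to the symmetric interval $[-a,a]$ and to exploit the parity of the orthogonal polynomials induced by the evenness of $\omega$. By Theorem~\ref{th1}, the triple $\mathcal{A}_{m,N}$ defines a finite element if and only if
\begin{equation*}
\left(\frac{\pi_m(a)}{\pi_m(-a)}\right)^N \neq 1.
\end{equation*}
The ratio is well-defined because the zeros of $\pi_m$ lie in the open interval $(-a,a)$, so $\pi_m(\pm a) \neq 0$.

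The main step is the classical parity relation $\pi_m(-x) = (-1)^m \pi_m(x)$. I would establish it as follows: set $q(x) := (-1)^m \pi_m(-x)$. Then $q$ is a polynomial of degree $m$ with the same leading coefficient as $\pi_m$. Performing the substitution $y = -x$ in the weighted inner product and using the evenness of $\omega$ yields
\begin{equation*}
\langle q, x^k \rangle_\omega = (-1)^{m+k}\langle \pi_m, y^k\rangle_\omega = 0, \quad k = 0, \dots, m-1,
\end{equation*}
so $q$ is orthogonal to $\mathbb{P}_{m-1}([-a,a])$. Hence $q - \pi_m$ belongs to $\mathbb{P}_{m-1}([-a,a])$ and is itself orthogonal to every element of $\mathbb{P}_{m-1}([-a,a])$, which forces $q = \pi_m$.

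Evaluating the parity relation at $x = a$ gives $\pi_m(-a) = (-1)^m \pi_m(a)$, and therefore
\begin{equation*}
\left(\frac{\pi_m(a)}{\pi_m(-a)}\right)^N = (-1)^{mN}.
\end{equation*}
This differs from $1$ precisely when $mN$ is odd, so the equivalence in the corollary follows at once from Theorem~\ref{th1}. The only mildly technical step is the parity relation; the rest is a direct substitution.
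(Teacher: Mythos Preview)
Your proof is correct and follows essentially the same approach as the paper: both establish the parity relation $\pi_m(-a)=(-1)^m\pi_m(a)$ from the evenness of $\omega$ on the symmetric interval, compute $\bigl(\pi_m(a)/\pi_m(-a)\bigr)^N=(-1)^{mN}$, and invoke Theorem~\ref{th1}. The only difference is that you supply a self-contained proof of the parity relation (and note explicitly that $\pi_m(\pm a)\neq 0$), whereas the paper simply cites it as a known property.
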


\begin{proof}
Since the interval $[-a,a]$ is symmetric with respect to zero and the weight function $\omega$ is even, each orthogonal polynomial $\pi_n$ is either even or odd, depending on whether $n$ is even or odd, respectively. Therefore,
\begin{equation*}
        \pi_n(-a)= (-1)^n\pi_n(a), \quad n\in\mathbb{N}_0.
\end{equation*}
Using this identity, we obtain
\begin{equation*}
    \left( \frac{\pi_m(a)}{\pi_m(-a)} \right)^N= (-1)^{mN}.
\end{equation*}
The result follows directly from Theorem~\ref{th1}.
\end{proof}

\begin{remark} 
Let $m$ be an even integer, and let $K$ be a polygon with $N$ edges. Let $\left\{\pi_n\right\}_{n \in \mathbb{N}_0}$ be a sequence of orthogonal polynomials on $[-a,a]$, with respect to an even weight function $\omega$. In this setting, the product $mN$ is even and, by Corollary~\ref{cor:symmcase}, the triple $\mathcal{A}_{m,N}$ does not define a finite element. 
\end{remark}

\begin{corollary}[\textbf{The Jacobi Case}]\label{cor:jacobicase}  
 Let $K$ be a polygon with $N$ edges, and let $\bigl\{\pi_n^{(\alpha,\beta)}\bigr\}_{n \in \mathbb{N}_0}$ be the sequence of Jacobi polynomials on $[-1,1]$ with parameters $\alpha, \beta > -1$, orthogonal with respect to the Jacobi weight function 
\[\omega^{(\alpha,\beta)}(t)=(1-t)^{\alpha}(1+t)^{\beta}.\]
For a fixed $m \in \mathbb{N}$, the following statements hold:
\begin{enumerate}
    \item[$1^\circ$] If $\alpha \neq \beta$, the triple $\mathcal{A}_{m,N}$ defines a finite element for all values of $m$ and $N$;
    \item[$2^\circ$] If $\alpha = \beta$, the triple $\mathcal{A}_{m,N}$ defines a finite element if and only if $mN$ is odd.
\end{enumerate}
\end{corollary}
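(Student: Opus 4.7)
The plan is to apply Theorem~\ref{th1} directly: $\mathcal{A}_{m,N}$ is a finite element if and only if $\left(\pi_m^{(\alpha,\beta)}(1)/\pi_m^{(\alpha,\beta)}(-1)\right)^N \neq 1$, so I first need an explicit expression for this ratio. Using the classical endpoint values
\[
P_m^{(\alpha,\beta)}(1) = \binom{m+\alpha}{m}, \qquad P_m^{(\alpha,\beta)}(-1) = (-1)^m \binom{m+\beta}{m},
\]
the ratio factors as $(-1)^m \rho_m$, where $\rho_m := \binom{m+\alpha}{m}/\binom{m+\beta}{m}$ is a strictly positive real number (both binomial coefficients are positive since $\alpha,\beta>-1$). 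Hence condition~\eqref{cond1} of Theorem~\ref{th1} becomes the elementary inequality $(-1)^{mN}\rho_m^N \neq 1$.

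Next I would split the analysis into two cases. In the symmetric case $\alpha=\beta$ one has $\rho_m=1$, and the inequality reduces to $(-1)^{mN}\neq 1$, i.e., $mN$ odd, which is statement $2^\circ$. For the asymmetric case $\alpha\neq\beta$, I argue by sign: if $mN$ is odd, then $(-1)^{mN}\rho_m^N = -\rho_m^N$ is negative and cannot equal $1$; if $mN$ is even, the condition collapses to $\rho_m^N=1$, and since $\rho_m>0$ is real and $N\geq 1$, this forces $\rho_m=1$. So statement $1^\circ$ reduces to proving that $\rho_m\neq 1$ whenever $\alpha\neq\beta$.

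The key substantive step is to verify that the function $f(x) := \binom{m+x}{m} = \prod_{k=1}^m (x+k)/k$ is strictly monotone in $x$ on $(-1,\infty)$. I plan to use logarithmic differentiation: $(\log f)'(x) = \sum_{k=1}^m 1/(x+k)$, which is a sum of strictly positive terms for every $x>-1$. Thus $f$ is strictly increasing on $(-1,\infty)$, and $\alpha\neq\beta$ with $\alpha,\beta>-1$ gives $f(\alpha)\neq f(\beta)$, i.e., $\rho_m\neq 1$, completing statement $1^\circ$.

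I do not foresee any serious obstacles beyond recognizing this monotonicity argument; everything else amounts to plugging the explicit values of Jacobi polynomials at $\pm 1$ into Theorem~\ref{th1} and performing a short sign analysis.
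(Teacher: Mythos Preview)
Your proof is correct and follows essentially the same route as the paper: both apply Theorem~\ref{th1} together with the explicit endpoint values of $\pi_m^{(\alpha,\beta)}$ at $\pm 1$, reduce the question to whether the positive ratio $\rho_m=\prod_{k=1}^m(\alpha+k)/(\beta+k)$ equals $1$, and then show it does not when $\alpha\neq\beta$. The only minor differences are that the paper argues term by term (each factor $(\alpha+k)/(\beta+k)>1$ when $\alpha>\beta$) whereas you use the equivalent logarithmic-derivative monotonicity of $x\mapsto\binom{m+x}{m}$, and that for the symmetric case $\alpha=\beta$ the paper invokes Corollary~\ref{cor:symmcase} on even weights while you read off $\rho_m=1$ directly; your version is slightly more self-contained, but the substance is the same.
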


\begin{proof}
We first consider the case $\alpha \neq \beta$. Without loss of generality, we may assume that
\begin{equation}\label{alphabeta}
\alpha > \beta.    
\end{equation}
Using the fact that the values of Jacobi polynomials at the endpoints $\pm 1$ can be expressed in terms of Gamma functions as (see~\cite{szego1975orthogonal})
\begin{equation}\label{jacobi_endpoints}
\begin{aligned}
\pi_m^{(\alpha,\beta)}(1) &= \frac{\Gamma(\alpha+m+1)}{\Gamma(\alpha+1)\Gamma(m+1)}, \\
\pi_m^{(\alpha,\beta)}(-1) &= (-1)^m \frac{\Gamma(\beta+m+1)}{\Gamma(\beta+1)\Gamma(m+1)},
\end{aligned}
\end{equation}
we obtain
\begin{equation}\label{ex11s}
    \left( \frac{\pi^{(\alpha,\beta)}_m(1)}{\pi^{(\alpha,\beta)}_m(-1)} \right)^N= (-1)^{mN} \left(\frac{\Gamma(\alpha+m+1)\Gamma(\beta+1)}{\Gamma(\alpha+1)\Gamma(\beta+m+1)}\right)^N.
\end{equation}
Since the Gamma function satisfies~\cite{Abramowitz:1948:HOM}
\begin{equation}\label{gammaprop}
    \Gamma(x+1)=x\Gamma(x), \quad x\in\mathbb{R},
\end{equation}
it follows that
\begin{equation}\label{firstcond}
\frac{\Gamma(\alpha+m+1)}{\Gamma(\alpha+1)} = (\alpha+m)\cdots (\alpha+2)
(\alpha+1),
\end{equation}
and similarly
\begin{equation}\label{seccond}
\frac{\Gamma(\beta+1)}{\Gamma(\beta+m+1)} =
\frac{1}{(\beta+m)\cdots(\beta+2)(\beta+1)}.
\end{equation}
Substituting~\eqref{firstcond} and~\eqref{seccond} into~\eqref{ex11s}, we get
\begin{equation}\label{codnsl}
    \left( \frac{\pi^{(\alpha,\beta)}_m(1)}{\pi^{(\alpha,\beta)}_m(-1)} \right)^N= (-1)^{mN} \left(\frac{(\alpha+m)\cdots (\alpha+2)
(\alpha+1)}{(\beta+m)\cdots(\beta+2)(\beta+1)}\right)^N= (-1)^{mN}\prod_{k=1}^{m}\left(\frac{\alpha + k}{\beta + k}\right)^N.
\end{equation}
By assumption~\eqref{alphabeta}, it follows that 
\begin{equation*}
    \alpha + k > \beta + k, \quad \quad k=1,\dots,m,
\end{equation*}
and hence
\begin{equation*}
\frac{\alpha + k}{\beta + k} > 1, \quad k=1,\dots,m.
\end{equation*}
Therefore,
\begin{equation*}
\prod_{k=1}^{m}\left(\frac{\alpha + k}{\beta + k}\right)^N> 1, 
\end{equation*}
since $N\in\mathbb{N}$.  

Then, by~\eqref{codnsl}, condition~\eqref{cond1} of Theorem~\ref{th1} is satisfied for all values of $m$ and $N$, and the result follows for $\alpha \neq \beta$.

Now we consider the case $\alpha=\beta$. Let $\lambda > -1/2$ be such that
\begin{equation*}
    \alpha=\beta=\lambda-\frac{1}{2}.
\end{equation*}
In this case, the Jacobi polynomials reduce to the Gegenbauer polynomials, denoted by $\bigl\{C_n^{(\lambda)}\bigr\}_{n\in\mathbb{N}_0}$. These polynomials are orthogonal on $[-1,1]$ with respect to the even weight function
\begin{equation*}
        \omega^{(\lambda)}(t)= \left(1 - t^2\right)^{\lambda - 1/2}.
\end{equation*} 
The result then follows from Corollary~\ref{cor:symmcase}.  
\end{proof}

We now present an alternative formulation of Theorem~\ref{th1} that highlights the special role played by the parity of the product $mN$.
\begin{theorem}\label{cor:gencase}  Let $K$ be a polygon with $N$ edges, and let $\left\{\pi_n\right\}_{n \in \mathbb{N}_0}$ be a sequence of orthogonal polynomials on $[a,b]$ with respect to a weight function $\omega$. For a fixed $m \in \mathbb{N}$, the following statements hold:
\begin{enumerate}
  \item[$1^\circ$] If $mN$ is odd, then the triple $\mathcal{A}_{m,N}$ defines a finite element; 
  \item [$2^\circ$] If $mN$ is even, then the following statements are equivalent: 
  \begin{enumerate}
  \item[{\rm(a)}]  The triple $\mathcal{A}_{m,N}$ defines a finite element;
  \item[{\rm(b)}]  $\left|\pi_m(b)\right|\neq \left|\pi_m(a)\right|$;
  \item[{\rm(c)}] The polynomial $\pi_m\pi_m'$ is not orthogonal to the constant functions on $[a,b]$ with respect to the constant weight function $\omega(x) = 1$.
  \end{enumerate} 
\end{enumerate}
\end{theorem}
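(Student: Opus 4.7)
The plan is to reduce both statements to Theorem~\ref{th1} by means of a sign analysis of the values $\pi_m(a)$ and $\pi_m(b)$. The classical starting fact I would invoke is that $\pi_m$ has exactly $m$ simple zeros, all lying in the open interval $(a,b)$. Consequently $\pi_m(a)$ and $\pi_m(b)$ are both nonzero, and since $\pi_m$ is of constant sign outside of its zero set, one has $\operatorname{sgn}\pi_m(a)=(-1)^{m}\operatorname{sgn}\pi_m(b)$. Setting $r:=\pi_m(b)/\pi_m(a)$, this says $r<0$ when $m$ is odd and $r>0$ when $m$ is even.

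For $1^\circ$, suppose $mN$ is odd. Then $m$ is odd, so $r<0$; since $N$ is odd as well, $r^{N}<0$, and in particular $r^{N}\neq 1$. Condition~\eqref{cond1} of Theorem~\ref{th1} is therefore satisfied, and $\mathcal{A}_{m,N}$ defines a finite element.

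For $2^\circ$, assume $mN$ is even. A quick parity check in each of the two sub-cases ($m$ even, or $m$ odd with $N$ even) shows that $r^{N}>0$, hence the equation $r^{N}=1$ is equivalent to $r=\pm 1$, which in turn is equivalent to $|\pi_m(b)|=|\pi_m(a)|$. Applying Theorem~\ref{th1} then yields the equivalence (a)$\Leftrightarrow$(b). For (b)$\Leftrightarrow$(c), I would observe that $\pi_m\pi_m'=\tfrac{1}{2}(\pi_m^{2})'$, so the fundamental theorem of calculus gives
\begin{equation*}
\int_{a}^{b}\pi_m(x)\pi_m'(x)\,{\de}x=\tfrac{1}{2}\bigl(\pi_m(b)^{2}-\pi_m(a)^{2}\bigr),
\end{equation*}
which vanishes precisely when $|\pi_m(b)|=|\pi_m(a)|$.

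I do not expect a serious obstacle. Once the endpoint sign rule $\operatorname{sgn}\pi_m(a)=(-1)^{m}\operatorname{sgn}\pi_m(b)$ is in place, the remainder is elementary parity bookkeeping together with the one-line antiderivative identity above. The only point that merits a moment of care is that $r^{N}=1$ must be read as forcing $r=\pm 1$ rather than some other root of unity, which is legitimate because $r$ is a nonzero real number.
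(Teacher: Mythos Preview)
Your proposal is correct and follows essentially the same approach as the paper: both arguments use the location of the zeros of $\pi_m$ in $(a,b)$ to obtain the sign relation $\operatorname{sgn}\pi_m(a)=(-1)^m\operatorname{sgn}\pi_m(b)$, reduce $1^\circ$ and the equivalence (a)$\Leftrightarrow$(b) to the criterion of Theorem~\ref{th1} by a parity analysis of $r^N$, and handle (b)$\Leftrightarrow$(c) via the identity $\int_a^b \pi_m\pi_m'=\tfrac12(\pi_m(b)^2-\pi_m(a)^2)$. The only cosmetic differences are that the paper writes out the factorization $\pi_m(x)=\gamma_m^{(m)}\prod_k(x-\xi_{k,m})$ explicitly and organizes $2^\circ$ as a cycle (a)$\Rightarrow$(b)$\Rightarrow$(c)$\Rightarrow$(a), whereas you argue the equivalences directly.
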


\begin{proof}
We begin with the case where $mN$ is odd. It is well known that the zeros of the orthogonal polynomial $\pi_m$ are real, simple and lie strictly inside $(a,b)$, see~\cite[pp.~99]{mastroianni2008interpolation}. Hence we can express the polynomial $\pi_m$ as
\begin{equation*}
    \pi_m(x)=\gamma_m^{(m)}\prod_{k=1}^m \left(x-\xi_{k,m}\right), \, \text{ with } \gamma_m^{(m)} \neq 0 \text{ and } \xi_{k,m} \in (a,b), \quad k=1,\dots,m.
\end{equation*}
Evaluating at the endpoints, we find
\begin{eqnarray*}
    \pi_m(b)&=&\gamma_m^{(m)}\prod_{k=1}^m \left(b-\xi_{k,m}\right),\\ \pi_m(a)&=&\gamma_m^{(m)}\prod_{k=1}^m \left(a-\xi_{k,m}\right)=(-1)^m\gamma_m^{(m)} \prod_{k=1}^m \left(\xi_{k,m}-a\right).
\end{eqnarray*}
Therefore,
\begin{equation*}
    \left(\frac{\pi_m(b)}{\pi_m(a)}\right)^N=(-1)^{mN}C_m^N, 
\end{equation*}
where 
\begin{equation*}
    C_m^N=\prod_{k=1}^m\left(\frac{b-\xi_{k,m}}{\xi_{k,m}-a}\right)^N>0.
\end{equation*}
Since $mN$ is odd, we have 
\[
    \left( \frac{\pi_m(b)}{\pi_m(a)} \right)^N = (-1)^{mN} C_m^N=-C_m^N < 0,
\]
and thus condition~\eqref{cond1} of Theorem~\ref{th1} is satisfied. As a result $\mathcal{A}_{m,N}$ defines a finite element.

Now we consider the case where $mN$ is even. We prove the equivalence of statements (a), (b), and (c).

(a) $\Rightarrow$ (b). We assume that the triple $\mathcal{A}_{m,N}$ defines a finite element. Then, by Theorem~\ref{th1}, we have
\begin{equation}\label{cosssa}
    \left( \frac{\pi_m(b)}{\pi_m(a)} \right)^N \neq 1. 
\end{equation}
      Since $mN$ is even, either $m$ or $N$ is even. If $N$ is even, then~\eqref{cosssa} implies
    \begin{equation*}
        \frac{\pi_m(b)}{\pi_m(a)}\neq \pm 1
    \end{equation*}
and hence
    \begin{equation*}
        \pi_m(b)\neq \pm \pi_m(a).
    \end{equation*}
If instead $m$ is even, then $\pi_m(a)$ and $\pi_m(b)$ have the same sign, so
\begin{equation*}
    \pi_m(b)\neq -\pi_m(a)
\end{equation*}
and~\eqref{cosssa} implies
    \begin{equation*}
        \pi_m(b)\neq \pi_m(a).
    \end{equation*}
In both cases, we conclude that
    \begin{equation*}
        \left| \pi_m(b) \right| \neq \left| \pi_m(a) \right|.
    \end{equation*}

(b) $\Rightarrow$ (c). We suppose that $\left| \pi_m(b) \right| \neq \left| \pi_m(a) \right|$. Then, in particular
\begin{equation}\label{ssaaaa}
        \left(\pi_m(b)\right)^2\neq \left(\pi_m(a)\right)^2.
    \end{equation}
By the Fundamental Theorem of Calculus, we have
    \begin{equation} \label{condintds}
        \left(\pi_m(b)\right)^2 - \left(\pi_m(a)\right)^2 = \int_a^b \bigl[ \left(\pi_m(x)\right)^2 \bigr]^{\prime} {\de}x = 2 \int_a^b \pi_m(x) \pi_m^{\prime}(x)  {\de}x.
    \end{equation}
Combining~\eqref{ssaaaa} and~\eqref{condintds}, it follows that
    \begin{equation*}
             \int_a^b \pi_m(x) \pi_m^\prime(x) {\de}x \neq 0.
    \end{equation*}
   Therefore, the polynomial $\pi_m\pi_m'$ is not orthogonal to the constant functions on $[a,b]$, with respect to the constant weight function $\omega(x) = 1$.
   
(c) $\Rightarrow$ (a). We suppose by contradiction that $\mathcal{A}_{m,N}$ does not define a finite element. Then, again by Theorem~\ref{th1}, we have
    \begin{equation*}
        \left( \frac{\pi_m(b)}{\pi_m(a)} \right)^N = 1.
    \end{equation*}
   It follows that 
    \begin{equation*}
        \left(\pi_m(b)\right)^2 = \left(\pi_m(a)\right)^2,
    \end{equation*}
    and from~\eqref{condintds}, we deduce
    \begin{equation*}
        \int_a^b \pi_m(x) \pi_m^{\prime}(x) {\de}x = 0,
    \end{equation*}
    which implies that $\pi_m\pi_m'$ is orthogonal to the constant functions on $[a,b]$ with respect to the constant weight function $\omega(x) = 1$, in contradiction with assumption (c). This proves that $\mathcal{A}_{m,N}$ must define a finite element.

This concludes the proof.  
\end{proof}

\begin{remark}
The results above can be summarized as follows:
\begin{enumerate}
    \item For any sequence of orthogonal polynomials, the triple $\mathcal{A}_{m,N}$ defines a finite element whenever $mN$ is odd. If $mN$ is even, then $\mathcal{A}_{m,N}$ defines a finite element if and only if 
    \begin{equation*}
        \left|\pi_m(b)\right| \neq \left|\pi_m(a)\right|,
    \end{equation*}
     or, equivalently, if and only if the polynomial $\pi_m\pi'_m$ is not orthogonal to the constant functions on $[a, b]$ with respect to the constant weight function $\omega(x) = 1$.
    \item If the orthogonal polynomials are associated with an even weight function, then $\mathcal{A}_{m,N}$ defines a finite element if and only if $mN$ is odd.
    \item In the case of Jacobi polynomials with parameters $\alpha, \beta > -1$, the triple $\mathcal{A}_{m,N}$ defines a finite element for all values of $m$ and $N$ whenever $\alpha \neq \beta$. If instead $\alpha = \beta$, then $\mathcal{A}_{m,N}$ defines a finite element if and only if $mN$ is odd.
\end{enumerate}
\end{remark}

\begin{remark}
If $\left\{\pi_n\right\}_{n\in\mathbb{N}_0}$ denotes the sequence of Legendre polynomials, then in this particular case, the statements of Theorem~\ref{th1} coincide with the results presented in~\cite{bouihat2018new}. Indeed, it is well known that the Legendre polynomials satisfy the identity
\begin{equation*}
\pi_m(-t) = (-1)^m \pi_m(t), \quad t\in[-1,1],
\end{equation*}
which implies that $\pi_m$ is even when $m$ is even and odd when $m$ is odd. Therefore,
\begin{equation*}
    \left(\frac{\pi_m(b)}{\pi_m(a)}\right)^N = (-1)^{mN}
\end{equation*}
and by Theorem~\ref{th1} we have that the triple $\mathcal{A}_{m,N}$ defines a finite element if and only if $mN$ is odd.

Alternatively, this result can also be derived from Theorem~\ref{cor:gencase}. Indeed, if $mN$ is odd, then the triple defines a finite element. On the other hand, if $mN$ is even,  since $\pi'_m$ has opposite parity to $\pi_m$, the product $\pi_m\pi_m'$ is odd on the symmetric interval $[-1,1]$, and hence
\begin{equation*}
    \int_{-1}^{1} \pi_m(t)\pi_m'(t) {\de}t = 0.
\end{equation*}
Therefore, when $mN$ is even, condition~(c) of Theorem~\ref{cor:gencase} is not satisfied. Thus, in this case, the triple $\mathcal{A}_{m,N}$ does not define a finite element. 

In conclusion, Theorem~\ref{cor:gencase} recovers the result established in~\cite{bouihat2018new} for Legendre polynomials and extends it to a broader class of orthogonal polynomials. In this sense, the results presented here generalize those of~\cite{bouihat2018new}.
\end{remark}

In what follows, we distinguish between two cases depending on the parity of~$m$. To this end, we introduce the following notation, which will be used throughout the next subsections.

Let $\left\{\pi_n\right\}_{n\in\mathbb{N}_0}$ be a sequence of orthogonal polynomials on the interval $[-1,1]$ with respect to a weight function $\omega$. Given constants $a,b$ with $a < b$, we define the rescaled polynomials on $[a,b]$ by
\begin{equation}\label{scalimps}
    \pi_{n,a,b}(x) = \pi_n\left(\varphi(x)\right), \quad x \in [a,b], \quad n \in \mathbb{N}_0,
\end{equation}
where the transformation
\begin{equation}\label{rescab}
\varphi:x\in[a,b]\rightarrow \varphi(x)= \frac{a + b - 2x}{a - b}\in[-1,1], 
\end{equation}
maps the interval $[a,b]$ onto $[-1,1]$.  Let $\lambda > -1/2$ be a fixed parameter. We consider the Gegenbauer-Lobatto quadrature rule associated with the weight function
\begin{equation*}
     \omega^{(\lambda)}(\varphi(x))= \left(1 - \left(\varphi(x)\right)^2\right)^{\lambda - 1/2},
\end{equation*}
defined on the interval $[a,b]$ and given by
\begin{equation}\label{gegenlobquad}
     \int_{a}^b f(x) \omega^{(\lambda)}(\varphi(x)){\de}x \approx \omega^{(a)}f(a) + \omega^{(b)} f(b) + \sum_{i=1}^m \omega_i f\bigl(\xi_{i,a,b}^{(\lambda)}\bigr),
\end{equation}
where $\xi_{1,a,b}^{(\lambda)}, \dots, \xi_{m,a,b}^{(\lambda)}$ denote the $m$ interior nodes, which are the zeros of the derivative of the rescaled Gegenbauer polynomial  $\bigl(C_{m+1,a,b}^{(\lambda)}\bigr)^{\prime}$. As shown in~\cite{szego1975orthogonal}, the derivative of Gegenbauer polynomials satisfies
\begin{equation*}
         \bigl({C}_{m+1,a,b}^{(\lambda)}(x)\bigr)^{\prime}=\bigl({C}_{m+1}^{(\lambda)}(\varphi(x))\bigr)^{\prime}=\frac{m+2\lambda+1}{b-a}{C}_{m,a,b}^{(\lambda+1)}(x), \quad x\in[a,b],
    \end{equation*}
where we have used the identity
\begin{equation}\label{varphiprime}
    \varphi^{\prime}(x)=\frac{2}{b-a}.
\end{equation}
Therefore, $\bigl({C}_{m+1,a,b}^{(\lambda)}\bigr)^{\prime}$ and ${C}_{m,a,b}^{(\lambda+1)}$ have the same zeros. 
It is well known that the quadrature rule~\eqref{gegenlobquad} is exact for all polynomials of degree up to $2m + 1$; see~\cite{Gautschi:1997:NA}.

With this notation, we now state a lemma that will be useful in our analysis.

\begin{lemma}\label{lemmaimpsGLs}
The weights at the endpoints in the Gegenbauer-Lobatto quadrature formula~\eqref{gegenlobquad} satisfy
\begin{equation*}
    \omega^{(a)} = \omega^{(b)} = \frac{b-a}{4 \bigl(C_{m}^{(\lambda+1)}(1)\bigr)^2}\int_{-1}^1 \bigl(C_{m}^{(\lambda+1)}(t)\bigr)^2\omega^{(\lambda)}(t){\de}t.
\end{equation*}
\end{lemma}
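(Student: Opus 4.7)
The plan is to exploit two features of the Gegenbauer--Lobatto rule: the symmetry of the weight $\omega^{(\lambda)}$ about the midpoint of the interval, and the exactness of the rule up to degree $2m+1$. I will first reduce to the reference interval $[-1,1]$ via the affine map $\varphi$ of \eqref{rescab}, since by \eqref{varphiprime} any Gauss--Lobatto weight on $[a,b]$ equals $\tfrac{b-a}{2}$ times the corresponding weight on $[-1,1]$ for the same data. Thus it suffices to prove the analogous identity on $[-1,1]$ with endpoint weights $w^{(-1)}$, $w^{(1)}$ and nodes at the zeros of $C_m^{(\lambda+1)}$.

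Next, I would establish $w^{(-1)} = w^{(1)}$ by a symmetry argument: since $\omega^{(\lambda)}$ is even on $[-1,1]$ and the interior nodes $\{\xi_i^{(\lambda)}\}$ are symmetric about $0$ (being zeros of the Gegenbauer polynomial $C_m^{(\lambda+1)}$, whose parity alternates with the index), the rule obtained by the substitution $t\mapsto -t$ is another Gauss--Lobatto quadrature with the same nodes and exactness; by uniqueness of interpolatory weights this forces the endpoint weights to coincide.

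To evaluate $w^{(1)}$ explicitly, I pick the test polynomial
\begin{equation*}
    f(t) = (1+t)\bigl(C_m^{(\lambda+1)}(t)\bigr)^2,
\end{equation*}
which has degree $2m+1$ and therefore is integrated exactly. It vanishes at $t=-1$ and at every interior node $\xi_i^{(\lambda)}$ (those being the zeros of $C_m^{(\lambda+1)}$), while at $t=1$ it equals $2\bigl(C_m^{(\lambda+1)}(1)\bigr)^2$. Hence the quadrature identity collapses to
\begin{equation*}
    \int_{-1}^{1}(1+t)\bigl(C_m^{(\lambda+1)}(t)\bigr)^2\omega^{(\lambda)}(t)\,{\de}t = 2\,w^{(1)}\bigl(C_m^{(\lambda+1)}(1)\bigr)^2.
\end{equation*}
The $t$-term in the left-hand integrand is odd, so it integrates to zero by the evenness of $\omega^{(\lambda)}$ and of $\bigl(C_m^{(\lambda+1)}\bigr)^2$, leaving only the contribution of the constant $1$. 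Solving for $w^{(1)}$ gives
\begin{equation*}
    w^{(1)} = \frac{1}{2\bigl(C_m^{(\lambda+1)}(1)\bigr)^2}\int_{-1}^{1}\bigl(C_m^{(\lambda+1)}(t)\bigr)^2\omega^{(\lambda)}(t)\,{\de}t,
\end{equation*}
and scaling back by the factor $\tfrac{b-a}{2}$ from $\varphi$ produces the claimed formula.

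The steps are all routine once the right test function is chosen; the only real subtlety is ensuring that its degree stays within the exactness range $2m+1$ of the Gauss--Lobatto rule, which is why I use $(1+t)$ rather than a higher-degree factor isolating the endpoint $1$. I do not anticipate any significant obstacle beyond verifying the parity of $C_m^{(\lambda+1)}$ (which is even if $m$ is even and odd if $m$ is odd, hence $\bigl(C_m^{(\lambda+1)}\bigr)^2$ is even in either case) to justify the vanishing of the odd integral.
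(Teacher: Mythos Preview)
Your proposal is correct and follows essentially the same approach as the paper: both exploit exactness on the degree-$(2m+1)$ test polynomial $(1+t)\bigl(C_m^{(\lambda+1)}(t)\bigr)^2$ (the paper's $f_1$ on $[a,b]$ is precisely this after the affine change of variables) and use the evenness of $\omega^{(\lambda)}$ and $\bigl(C_m^{(\lambda+1)}\bigr)^2$ to drop the odd part. The only cosmetic differences are that the paper works directly on $[a,b]$ and establishes $\omega^{(a)}=\omega^{(b)}$ by subtracting two test functions rather than invoking your symmetry/uniqueness argument; both routes are equally valid.
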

\begin{proof}
We consider two test functions
\begin{equation*}
    f_1(x)=(x-a)\bigl({C}_{m,a,b}^{(\lambda+1)}(x)\bigr)^2, \quad f_2(x)=(b-x)\bigl({C}_{m,a,b}^{(\lambda+1)}(x)\bigr)^2, \quad x\in[a,b].
\end{equation*}
We observe that
\begin{equation*}
    \int_{a}^b \left(f_2(x)-f_1(x)\right) \omega^{(\lambda)}(\varphi(x)){\de}x=(a-b)\int_{a}^b \varphi(x)\bigl({C}_{m,a,b}^{(\lambda+1)}(x)\bigr)^2\omega^{(\lambda)}(\varphi(x)){\de}x,
\end{equation*}
where $\varphi$ is defined in~\eqref{rescab}. 
Using $\varphi$ as a change of variables with
\begin{equation*}
    \varphi^{\prime}(x)=\frac{2}{b-a},
\end{equation*}
we get
\begin{align*} 
 \int_{a}^b \left(f_2(x)-f_1(x)\right) \omega^{(\lambda)}(\varphi(x)){\de}x &=  (a-b)\int_{a}^b \varphi(x)\bigl({C}_{m,a,b}^{(\lambda+1)}(x)\bigr)^2\omega^{(\lambda)}(\varphi(x)){\de}x\\
& =-\frac{(b-a)^2}{2}\int_{a}^b \varphi(x)\bigl({C}_{m}^{(\lambda+1)}(\varphi(x))\bigr)^2\omega^{(\lambda)}(\varphi(x))\varphi^{\prime}(x){\de}x\notag\\
    &=-\frac{(b-a)^2}{2}\int_{-1}^1 t \bigl({C}_{m}^{(\lambda+1)}(t)\bigr)^2\omega^{(\lambda)}(t){\de}t=0,
\end{align*}
since the integrand is odd and the interval is symmetric with respect to zero. Hence
\begin{equation}\label{imspaus}
    \int_{a}^b \left(f_2(x)-f_1(x)\right)\omega^{(\lambda)}(\varphi(x)) {\de}x=0.
\end{equation}

On the other hand, since $f_1,f_2 \in \mathbb{P}_{2m+1}([a,b])$ and the quadrature formula~\eqref{gegenlobquad} is exact for such polynomials, we have
\begin{eqnarray*}
    \int_{a}^b f_1(x) \omega^{(\lambda)}(\varphi(x)){\de}x&=&\omega^{(b)}(b-a) \bigl({C}_{m,a,b}^{(\lambda+1)}(b)\bigr)^2,\\
        \int_{a}^b f_2(x) \omega^{(\lambda)}(\varphi(x)){\de}x&=&\omega^{(a)}(b-a) \bigl({C}_{m,a,b}^{(\lambda+1)}(a)\bigr)^2.
\end{eqnarray*}
Therefore, since
\begin{equation*}
\bigl({C}_{m,a,b}^{(\lambda+1)}(a)\bigr)^2= \bigl({C}_{m}^{(\lambda+1)}(-1)\bigr)^2=\bigl({C}_{m}^{(\lambda+1)}(1)\bigr)^2=\bigl({C}_{m,a,b}^{(\lambda+1)}(b)\bigr)^2,
\end{equation*}
by~\eqref{imspaus} and the linearity of the integral, we obtain
\begin{equation*}
     0=\int_{a}^b \bigl(f_2(x)-f_1(x)\bigr) \omega^{(\lambda)}(\varphi(x)){\de}x=\bigl(\omega^{(a)}-\omega^{(b)}\bigr)(b-a)\bigl(C_{m,a,b}^{(\lambda+1)}(b)\bigr)^2.
\end{equation*}
Hence, we conclude that $\omega^{(b)} = \omega^{(a)}$.

Now we compute the explicit value of  $\omega^{(b)}$. Using $\varphi$ as change of variables, we find
\begin{align}\notag
    \int_{a}^b f_1(x)\omega^{(\lambda)}(\varphi(x)){\de}x&= \int_{a}^b (x-a)\bigl(C_{m,a,b}^{(\lambda+1)}(x)\bigr)^2\omega^{(\lambda)}(\varphi(x)){\de}x\\\notag
   &= -\frac{1}{2}\int_{a}^b (2a-2x)\bigl(C_{m,a,b}^{(\lambda+1)}(x)\bigr)^2\omega^{(\lambda)}(\varphi(x)){\de}x\\
   \notag
   &= \frac{b-a}{2}\int_{a}^b \left(\frac{(a+b-2x)+(a-b)}{a-b}\right)\bigl(C_{m,a,b}^{(\lambda+1)}(x)\bigr)^2\omega^{(\lambda)}(\varphi(x)){\de}x\\
   \notag
   &= \frac{b-a}{2}\int_{a}^b \left(\varphi(x)+1\right)\bigl(C_{m,a,b}^{(\lambda+1)}(x)\bigr)^2\omega^{(\lambda)}(\varphi(x)){\de}x\\
   \notag&=\frac{(b-a)^2}{4} \int_{a}^b (\varphi(x)+1)\bigl(C_{m}^{(\lambda+1)}(\varphi(x))\bigr)^2\omega^{(\lambda)}(\varphi(x))\varphi^{\prime}(x){\de}x\\\notag&=\frac{(b-a)^2}{4} \int_{-1}^1 (t+1)\bigl(C_{m}^{(\lambda+1)}(t)\bigr)^2\omega^{(\lambda)}(t){\de}t\\\label{newnewnew}
   &=\frac{(b-a)^2}{4} \int_{-1}^1 \bigl(C_{m}^{(\lambda+1)}(t)\bigr)^2\omega^{(\lambda)}(t){\de}t,
\end{align}
using, as before, the fact that
\begin{equation*}
    \int_{-1}^1 t\bigl(C_{m}^{(\lambda+1)}(t)\bigr)^2\omega^{(\lambda)}(t){\de}t=0. 
\end{equation*}
On the other hand, since $f_1 \in \mathbb{P}_{2m+1}([a,b])$ and the quadrature formula~\eqref{gegenlobquad} is exact for all polynomials of degree up to $2m+1$, we have
\begin{align}\label{new4}
    \int_{a}^b f_1(x)\omega^{(\lambda)}(\varphi(x)){\de}x&=\int_{a}^b (x-a)\bigl(C_{m,a,b}^{(\lambda+1)}(x)\bigr)^2\omega^{(\lambda)}(\varphi(x)){\de}x\notag\\ 
    &=\omega^{(b)}(b-a)\bigl(C_{m}^{(\lambda+1)}(1)\bigr)^2.
\end{align}
By combining \eqref{newnewnew} and \eqref{new4}, we deduce that
\begin{equation*}
\omega^{(b)}(b-a)\bigl(C_{m}^{(\lambda+1)}(1)\bigr)^2=\frac{(b-a)^2}{4} \int_{-1}^1 \bigl(C_{m}^{(\lambda+1)}(t)\bigr)^2\omega^{(\lambda)}(t){\de}t
\end{equation*}
and then
\begin{equation*}
    \omega^{(b)}=\frac{(b-a)}{4\bigl(C_{m}^{(\lambda+1)}(1)\bigr)^2}\int_{-1}^1 \bigl(C_m^{(\lambda+1)}(t)\bigr)^2\omega^{(\lambda)}(t) {\de}t. 
\end{equation*}
This concludes the proof. 
\end{proof}

We now present a set of equivalent conditions characterizing the non-orthogonality between the derivative of a rescaled orthogonal polynomial and a weighted Gegenbauer-type function. These conditions also provide an explicit characterization of the coefficients of the polynomial $\pi_m$ when expressed in the monomial basis.

\begin{theorem}\label{cor:symmcaseintgen}
Let $\left\{\pi_{n}\right\}_{n \in \mathbb{N}_0}$ be a sequence of orthogonal polynomials on $[-1,1]$, with respect to a weight function $\omega$. For any interval $[a,b]$, let 
$\left\{\pi_{n,a,b}\right\}_{n\in\mathbb{N}_0}$, with $\pi_{n,a,b}(x)=\pi_n(\varphi(x))$, $n\in\mathbb{N}_0$, 
denote the corresponding sequence of rescaled polynomials on $[a,b]$, defined in~\eqref{scalimps} via the change of variables $\varphi$. Then, for a fixed $k\in\{0,1,\ldots,m\}$, the following statements are equivalent:
\begin{enumerate}
\item[$1^\circ$] $\pi^{(k)}_{m,a,b}(b)+(-1)^{m+k+1}\pi_{m,a,b}^{(k)}(a)\neq 0$;
\item[$2^\circ$] The polynomial ${\pi}^{(k)}_{m,a,b}(x)$ is not orthogonal to the function $\left(\varphi(x)\right)^{k+1} {C}_{m,a,b}^{(\lambda+1)}(x)$ on $[a,b]$, with respect to the weight function 
 \begin{equation*}
     \omega^{(\lambda)}(\varphi(x)) = \left(1-(\varphi(x))^2\right)^{\lambda-1/2};
 \end{equation*}
 \item[$3^\circ$] Let each polynomial $\pi_n$ be written in the monomial basis as
\begin{equation}\label{express}
        \pi_{n}(t) = \gamma_n^{(n)} t^n + \gamma_{n-1}^{(n)} t^{n-1} + \cdots + \gamma_0^{(n)} = \sum_{\iota=0}^n \gamma_{\iota}^{(n)}t^{\iota}, \quad n \in \mathbb{N}_0.
    \end{equation}
    Then
    \begin{itemize}
        \item if $m$ is odd, it holds that
        \begin{equation*}
   \sum_{\substack{\iota = k \\ \nu \ \mathrm{even}}}^{m} \frac{\nu!}{(\nu-k)!} \gamma_{\nu}^{(m)} \neq 0;
        \end{equation*}
        \item if $m$ is even, it holds that
        \begin{equation*}
         \sum_{\substack{\nu = k \\ \nu \ \mathrm{odd}}}^{m} \frac{\nu!}{(\nu-k)!} \gamma_{\nu}^{(m)} \neq 0.
        \end{equation*}
    \end{itemize}
\end{enumerate}
\end{theorem}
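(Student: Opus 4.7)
The plan is to prove the two equivalences $1^\circ \Leftrightarrow 2^\circ$ and $1^\circ \Leftrightarrow 3^\circ$ separately, since the third condition is essentially an unfolding of the first one in the monomial basis, while the second requires the Gegenbauer--Lobatto machinery developed in Lemma~\ref{lemmaimpsGLs}.

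For $1^\circ \Leftrightarrow 2^\circ$, the main idea is to apply the Gegenbauer--Lobatto quadrature rule~\eqref{gegenlobquad} to the integrand
\begin{equation*}
F(x)=\pi^{(k)}_{m,a,b}(x)\,(\varphi(x))^{k+1}\,{C}_{m,a,b}^{(\lambda+1)}(x).
\end{equation*}
A degree count gives $\deg F=(m-k)+(k+1)+m=2m+1$, which is exactly the order of exactness of~\eqref{gegenlobquad}. Since the interior nodes $\xi_{i,a,b}^{(\lambda)}$ are the zeros of ${C}_{m,a,b}^{(\lambda+1)}$, all interior contributions vanish, and only the endpoint terms survive. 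Evaluating at the endpoints using $\varphi(a)=-1$, $\varphi(b)=1$, together with the parity identity ${C}_{m}^{(\lambda+1)}(-1)=(-1)^m{C}_{m}^{(\lambda+1)}(1)\neq 0$ and the equality $\omega^{(a)}=\omega^{(b)}$ from Lemma~\ref{lemmaimpsGLs}, reduces the weighted integral to
\begin{equation*}
\omega^{(b)}\,{C}_{m}^{(\lambda+1)}(1)\left[\pi^{(k)}_{m,a,b}(b)+(-1)^{m+k+1}\pi^{(k)}_{m,a,b}(a)\right].
\end{equation*}
Since the prefactor is nonzero, the non-orthogonality in $2^\circ$ is equivalent to the endpoint condition in $1^\circ$.

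For $1^\circ \Leftrightarrow 3^\circ$, the plan is to compute $\pi^{(k)}_{m,a,b}(b)$ and $\pi^{(k)}_{m,a,b}(a)$ explicitly via the chain rule. Because $\varphi$ is affine with constant derivative $\varphi'(x)=2/(b-a)$, one has
\begin{equation*}
\pi^{(k)}_{m,a,b}(x)=\bigl(\tfrac{2}{b-a}\bigr)^{k}\pi_m^{(k)}(\varphi(x)).
\end{equation*}
Using the expansion~\eqref{express} and differentiating termwise, the endpoint combination becomes
\begin{equation*}
\pi^{(k)}_{m,a,b}(b)+(-1)^{m+k+1}\pi^{(k)}_{m,a,b}(a)=\bigl(\tfrac{2}{b-a}\bigr)^{k}\sum_{\nu=k}^{m}\frac{\nu!}{(\nu-k)!}\gamma_{\nu}^{(m)}\bigl[1+(-1)^{m+\nu+1}\bigr].
\end{equation*}
The bracket vanishes unless $m+\nu$ is odd, which forces $\nu$ to have opposite parity from $m$. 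Isolating the surviving terms then gives precisely the two cases stated in $3^\circ$.

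The only step that requires any real care is the first implication: one must check that the degree count matches the exactness of the Lobatto rule and that the endpoint values of $\omega^{(\lambda)}(\varphi(x))$ (which are zero when $\lambda>1/2$) are correctly absorbed into the quadrature weights $\omega^{(a)},\omega^{(b)}$ rather than appearing as separate factors; this is exactly what Lemma~\ref{lemmaimpsGLs} guarantees. Once that is in place, both directions of each equivalence follow by direct algebraic manipulation, and the three statements are chained through $1^\circ$.
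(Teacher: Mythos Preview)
Your proposal is correct and follows essentially the same route as the paper: for $1^\circ\Leftrightarrow 2^\circ$ you apply the Gegenbauer--Lobatto rule~\eqref{gegenlobquad} to the degree-$(2m+1)$ integrand, kill the interior terms via the zeros of $C_{m,a,b}^{(\lambda+1)}$, and reduce to endpoint values using Lemma~\ref{lemmaimpsGLs} and the parity of $C_m^{(\lambda+1)}$; for $1^\circ\Leftrightarrow 3^\circ$ you use the chain rule with the affine $\varphi$ and expand in the monomial basis, exactly as the paper does. Your remark about the endpoint weight values being absorbed into $\omega^{(a)},\omega^{(b)}$ is a good sanity check that the paper leaves implicit.
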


\begin{proof}
We prove the equivalence of the three statements.
\vspace*{2mm}

$1^\circ \Leftrightarrow 2^\circ$  For any $k=0,1,\ldots,m$, the polynomial 
\begin{equation*}
    \pi_{m,a,b}^{(k)}(x) \left(\varphi(x)\right)^{k+1} C_{m,a,b}^{(\lambda+1)}(x) \in \mathbb{P}_{2m+1}.
\end{equation*}
Since the quadrature formula~\eqref{gegenlobquad} is exact for polynomials of degree up to $2m+1$,  it follows from Lemma~\ref{lemmaimpsGLs} that
\begin{eqnarray}\label{imeps}
&&\int_{a}^b \pi^{(k)}_{m,a,b}(x) \left(\varphi(x)\right)^{k+1} {C}_{m,a,b}^{(\lambda+1)}(x)\omega^{(\lambda)}(\varphi(x)){\de}x\\ \notag
&&\qquad\quad=\omega^{(b)}\left(\pi_{m,a,b}^{(k)}(a)\left(\varphi(a)\right)^{k+1} {C}_{m}^{(\lambda+1)}\left(\varphi(a)\right)+ \pi^{(k)}_{m,a,b}(b)\left(\varphi(b)\right)^{k+1} {C}_{m}^{(\lambda+1)}\left(\varphi(b)\right)\right)\\\notag
&&\qquad\quad=\omega^{(b)}\left(\pi_{m,a,b}^{(k)}(a) (-1)^{k+1} {C}_{m}^{(\lambda+1)}(-1)+ \pi_{m,a,b}^{(k)}(b)  {C}_{m}^{(\lambda+1)}(1)\right)\\\notag
    &&\qquad\quad={C}_{m}^{(\lambda+1)}(1)\omega^{(b)}\left((-1)^{m+k+1}\pi_{m,a,b}^{(k)}(a) + \pi_{m,a,b}^{(k)}(b)  \right),
\end{eqnarray}
where in the last step we have used the parity property
\begin{equation*}
    {C}_{m}^{(\lambda+1)}(-1) = (-1)^m {C}_{m}^{(\lambda+1)}(1).
\end{equation*}
Therefore, the integral~\eqref{imeps} is nonzero if and only if
\begin{equation*}
\pi_{m,a,b}^{(k)}(b)+(-1)^{m+k+1}\pi_{m,a,b}^{(k)}(a)\neq 0.    
\end{equation*}

$1^\circ \Leftrightarrow 3^\circ$
Since $\pi_{m,a,b}(x) = \pi_m(\varphi(x))$ and $ \varphi$ is affine, the chain rule gives
\begin{equation*}
\pi_{m,a,b}^{(k)}(x) = \pi_m^{(k)}(\varphi(x)) \left (\varphi'(x)\right)^k, 
\end{equation*}
then, using~\eqref{varphiprime}, it results
\begin{equation*}
\pi_{m,a,b}^{(k)}(b)+(-1)^{m+k+1}\pi_{m,a,b}^{(k)}(a)= \left(\pi_m^{(k)}(1)+(-1)^{m+k+1}\pi_m^{(k)}(-1)  \right) \left(\frac{2}{b-a}\right)^k,
\end{equation*}
which is nonzero if and only if  
\begin{equation}\label{codaa}
\pi_m^{(k)}(1)+(-1)^{m+k+1} \pi_m^{(k)}(-1) \neq 0.
\end{equation}
Using the monomial expansion~\eqref{express}, we can write
\begin{align*}
   \pi^{(k)}_{m}(1)+(-1)^{m+k+1}\pi_{m}^{(k)}(-1) &=\sum_{\nu=k}^m \frac{\nu!}{(\nu-k)!} \gamma_{\nu}^{(m)}+\sum_{\nu=k}^m \frac{\nu!}{(\nu-k)!}\gamma_{\nu}^{(m)}(-1)^{m+\nu+1}\\
    &=\sum_{\nu=k}^m \frac{\nu!}{(\nu-k)!} \gamma_{\nu}^{(m)}\left(1+(-1)^{m+\nu+1}\right).
\end{align*}

Hence, condition~\eqref{codaa} is satisfied if and only if the following holds:
\begin{itemize}
\item If $m$ is odd, the sum of the even-index coefficients is nonzero, i.e.,
\begin{equation*}
   \sum_{\substack{\nu = k \\ \nu \ \mathrm{even}}}^{m} \frac{\nu!}{(\nu-k)!} \gamma_{\nu}^{(m)} \neq 0;
\end{equation*}
\item If $m$ is even, the sum of the odd-index coefficients is nonzero, i.e.,
\begin{equation*}
    \sum_{\substack{\nu = k \\ \nu \ \mathrm{odd}}}^{m} \frac{\nu!}{(\nu-k)!} \gamma_{\nu}^{(m)} \neq 0.
\end{equation*}
\end{itemize}

The proof is completed.  
\end{proof}

When we restrict ourselves to the case $k = m - 1$, Theorem~\ref{cor:symmcaseintgen} gives several equivalent conditions that connect the analytic and algebraic properties of orthogonal polynomials. These include relations with their derivatives, recurrence coefficients, and monomial expansions.


\begin{theorem}\label{vrthm1}
Let $\left\{\pi_{n}\right\}_{n \in \mathbb{N}_0}$ be a sequence of orthogonal polynomials on $[-1,1]$, with respect to a weight function $\omega$, expressed in the monomial basis as
\begin{equation}\label{moncodn}
        \pi_n(t) = \gamma_n^{(n)} t^n + \gamma_{n-1}^{(n)} t^{n-1} + \cdots + \gamma_0^{(n)} = \gamma_n^{(n)} \prod_{k=1}^n \left(t - \xi_{k,n}\right), \quad \gamma_n^{(n)} \neq 0, \quad n \in \mathbb{N}_0.
    \end{equation}
    Let $\left\{\widetilde{\pi}_n\right\}_{n \in \mathbb{N}_0}$ denote the corresponding sequence of monic orthogonal polynomials, defined by
    \begin{equation}\label{monicpol}
        \widetilde{\pi}_n(t) = \frac{1}{\gamma_n^{(n)}} \pi_n(t) = t^n + \widetilde{\gamma}_{n-1}^{(n)} t^{n-1} + \cdots + \widetilde{\gamma}_0^{(n)}, \quad n \in \mathbb{N}_0,
    \end{equation}
    where
    \begin{equation}\label{condortpolss}
        \widetilde{\gamma}_k^{(n)} = \frac{\gamma_k^{(n)}}{\gamma_n^{(n)}}, \quad k = 0,1, \dots, n-1.
    \end{equation}
Then, for every $m\in\mathbb{N}$, the following statements are equivalent:
    \begin{enumerate}
\item[$1^\circ$]  $\pi_{m}^{(m-1)}(1)+\pi_{m}^{(m-1)}(-1) \neq 0$;
\vspace*{2mm}
\item[$2^\circ$] The polynomial $\pi_{m}^{(m-1)}(t)$
 is not orthogonal to the function $t^{m} C_{m}^{(\lambda+1)}(t)$ on $[-1,1]$ with respect to the weight function
        \begin{equation*}
            \omega^{(\lambda)}(t) = \left(1 - t^2\right)^{\lambda - 1/2};
        \end{equation*}
\item[$3^\circ$] The coefficient $\gamma_{m-1}^{(m)} \neq 0$;
\vspace*{2mm}
\item[$4^\circ$] The following condition holds
        \begin{equation*}
            \sum_{k=1}^m \xi_{k,m} \neq 0;
        \end{equation*}
\item[$5^\circ$] The coefficients $c_k$ in the three-term recurrence relation for the monic polynomials $\left\{\widetilde{\pi}_n\right\}_{n \in \mathbb{N}_0}$
    \begin{equation}\label{threemonic}
            \widetilde{\pi}_n(t) = \left(t - c_n\right) \widetilde{\pi}_{n-1}(t) - d_n \widetilde{\pi}_{n-2}(t), \quad n \geq 1,
        \end{equation}
         with $\widetilde{\pi}_{-1}(t) = 0$, $\widetilde{\pi}_0(t) = 1$, satisfy
        \begin{equation*}
            \sum_{k=1}^m c_k \neq 0.
        \end{equation*}
    \end{enumerate}
\end{theorem}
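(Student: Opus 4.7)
The plan is to prove the chain of equivalences $1^\circ \Leftrightarrow 2^\circ \Leftrightarrow 3^\circ \Leftrightarrow 4^\circ \Leftrightarrow 5^\circ$ by reducing each link to a short elementary argument. For $1^\circ \Leftrightarrow 2^\circ$ I would simply specialize Theorem~\ref{cor:symmcaseintgen} by choosing $[a,b]=[-1,1]$, so that the change of variables $\varphi$ is the identity and $\pi_{m,a,b}^{(k)} = \pi_m^{(k)}$, together with $k = m-1$; the sign $(-1)^{m+k+1}=(-1)^{2m}=1$ reduces statement $1^\circ$ of Theorem~\ref{cor:symmcaseintgen} exactly to $1^\circ$ here and similarly for $2^\circ$.

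Next, $1^\circ \Leftrightarrow 3^\circ$ would follow from differentiating $\pi_m$ exactly $m-1$ times: only the two leading monomials in \eqref{moncodn} survive, producing the affine polynomial $\pi_m^{(m-1)}(t) = m!\,\gamma_m^{(m)}\,t + (m-1)!\,\gamma_{m-1}^{(m)}$, whence
\[
\pi_m^{(m-1)}(1)+\pi_m^{(m-1)}(-1) = 2(m-1)!\,\gamma_{m-1}^{(m)},
\]
and this vanishes iff $\gamma_{m-1}^{(m)}=0$. The equivalence $3^\circ \Leftrightarrow 4^\circ$ would then be Vieta's formula applied to the factorization in~\eqref{moncodn}: expanding the product yields $\gamma_{m-1}^{(m)} = -\gamma_m^{(m)}\sum_{k=1}^m \xi_{k,m}$, and since $\gamma_m^{(m)} \neq 0$ by assumption, the two vanishing conditions coincide.

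The main substantive step is $4^\circ \Leftrightarrow 5^\circ$, which requires passing from root sums to recurrence coefficients through the monic polynomials $\widetilde{\pi}_n$. Using~\eqref{monicpol} together with~\eqref{condortpolss} gives $\widetilde{\gamma}_{m-1}^{(m)} = \gamma_{m-1}^{(m)}/\gamma_m^{(m)} = -\sum_{k=1}^m \xi_{k,m}$ by Vieta, so it suffices to show $\widetilde{\gamma}_{m-1}^{(m)} = -\sum_{k=1}^m c_k$. To this end I would equate the coefficients of $t^{n-1}$ on both sides of the three-term recurrence~\eqref{threemonic}: since $d_n\widetilde{\pi}_{n-2}(t)$ has degree $n-2$, it contributes nothing at order $t^{n-1}$, and the $t\widetilde{\pi}_{n-1}(t)$ term shifts the index, producing the clean recursion $\widetilde{\gamma}_{n-1}^{(n)} = \widetilde{\gamma}_{n-2}^{(n-1)} - c_n$ for $n\ge 1$. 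Starting from $\widetilde{\pi}_0 \equiv 1$, whose ``subleading'' coefficient is taken to be $0$, a straightforward telescoping yields $\widetilde{\gamma}_{m-1}^{(m)} = -\sum_{k=1}^m c_k$, giving $\sum_{k=1}^m \xi_{k,m} = \sum_{k=1}^m c_k$ and therefore the equivalence.

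The only point requiring technical care is the bookkeeping in this last step, namely correctly extracting the subleading coefficient from~\eqref{threemonic} and properly anchoring the telescoping at $n=1$; all other links are immediate from formulas already established in the excerpt or from elementary properties of polynomials.
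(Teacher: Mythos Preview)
Your proposal is correct and follows essentially the same route as the paper: the same chain $1^\circ\Leftrightarrow 2^\circ$ (specializing Theorem~\ref{cor:symmcaseintgen} at $a=-1$, $b=1$, $k=m-1$), $1^\circ\Leftrightarrow 3^\circ$ (direct $(m-1)$-fold differentiation giving $2(m-1)!\,\gamma_{m-1}^{(m)}$), $3^\circ\Leftrightarrow 4^\circ$ (Vieta), and $4^\circ\Leftrightarrow 5^\circ$ (comparing the $t^{n-1}$ coefficients in~\eqref{threemonic} and telescoping). The paper records the recurrence step as $c_n=\widetilde{\gamma}_{n-2}^{(n-1)}-\widetilde{\gamma}_{n-1}^{(n)}$ and then sums, which is exactly your computation rearranged; your bookkeeping at $n=1$ with $\widetilde{\gamma}_{-1}^{(0)}=0$ matches the paper's convention.
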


\begin{proof}
    We prove the equivalence of the five statements.
\vspace*{2mm}

$1^\circ \Leftrightarrow 2^\circ$
      This follows from Theorem~\ref{cor:symmcaseintgen} with $a = -1$, $b = 1$ and $k = m - 1$.
\vspace*{2mm}

$1^\circ \Leftrightarrow 3^\circ$
     By differentiating~\eqref{moncodn}, we have
    \begin{align*}
    \pi_{m}^{(m-1)}(1)+\pi_{m}^{(m-1)}(-1) 
      &= \bigl(m!\gamma_m^{(m)}+(m-1)! \gamma_{m-1}^{(m)}\bigr)+\bigl(-m!\gamma_m^{(m)}+(m-1)! \gamma_{m-1}^{(m)}\bigr) \\
      &= 2(m-1)! \gamma_{m-1}^{(m)},
    \end{align*}
     which is nonzero if and only if $\gamma_{m-1}^{(m)} \neq 0$.
     
$3^\circ \Leftrightarrow 4^\circ$ Using Vieta's formulas~\cite{vinberg2003course}, which relate the coefficients of a polynomial to sums and products of its roots, we find that $\pi_m$ satisfies
\begin{equation*}
\sum_{k=1}^m \xi_{k,m}=-\frac{\gamma_{m-1}^{(m)}}{\gamma_{m}^{(m)}},
\end{equation*}
 so $\gamma_{m-1}^{(m)} \neq 0$ if and only if 
\begin{equation*}
\sum_{k=1}^m \xi_{k,m} \neq 0.
\end{equation*}

$4^\circ \Leftrightarrow 5^\circ$
 The monic polynomials satisfy the three-term recurrence relation
\begin{equation*}
\widetilde{\pi}_n(t)=\left(t-c_n\right)\widetilde{\pi}_{n-1}(t)-d_n\widetilde{\pi}_{n-2}(t), \quad n\in\mathbb{N},
\end{equation*}
with the initial conditions $\widetilde{\pi}_{-1}(t)=0$ and $\widetilde{\pi}_0(t)=1$. Comparing the coefficients of $t^{n-1}$ on both sides gives
\begin{equation*}
    c_n=\widetilde{\gamma}_{n-2}^{(n-1)}-\widetilde{\gamma}_{n-1}^{(n)}, \quad n\in\mathbb{N}.
\end{equation*}
By summing the first $k$ equalities, setting $\widetilde{\gamma}_{-1}^{(0)} = 0$, yields
\begin{equation*}
\sum_{k=1}^m c_k=\sum_{k=1}^m \left(\widetilde{\gamma}_{k-2}^{(k-1)}-\widetilde{\gamma}_{k-1}^{(k)}\right),
\end{equation*}
which simplifies to 
\begin{equation*}
   \sum_{k=1}^m c_k= -\widetilde{\gamma}_{m-1}^{(m)}.
\end{equation*}
Hence, using~\eqref{condortpolss}, we have 
\begin{equation*}
\sum_{k=1}^m c_k= -\frac{\gamma_{m-1}^{(m)}}{\gamma_{m}^{(m)}}. 
\end{equation*}
Then $\gamma_{m-1}^{(m)}\neq 0$ if and only if
\begin{equation*}
\sum_{k=1}^m c_k \neq 0.
\end{equation*}

    This concludes the proof.  
\end{proof}

\subsection{The case of even polynomial degree} 

We now focus on the case where $m$ is even, which corresponds to one of the configurations where
$mN$ is even and unisolvence may fail (see Theorem~\ref{cor:gencase}). In the following, we present a simplified version of Theorem~\ref{cor:symmcaseintgen} that holds in this specific setting.

\begin{lemma}\label{dddpart}
Let $m\in\mathbb{N}$ be an even positive integer. Let $\left\{\pi_{n}\right\}_{n \in \mathbb{N}_0}$ be a sequence of orthogonal polynomials on $[-1,1]$, with respect to a weight function $\omega$. For any interval $[a,b]$, let 
$\left\{\pi_{n,a,b}\right\}_{n\in\mathbb{N}_0}$, with $\pi_{n,a,b}(x)=\pi_n(\varphi(x))$, $n\in\mathbb{N}_0$,
denote the corresponding sequence of rescaled polynomials on $[a,b]$, defined in~\eqref{scalimps} via the change of variables $\varphi$. Then, for a fixed $k=0,1,\dots,m$, the following statements are equivalent:
\begin{enumerate}
\item[$1^\circ$] $\pi^{(k)}_{m,a,b}(b)+(-1)^{k+1} \pi_{m,a,b}^{(k)}(a)  \neq 0$;
\vspace*{1mm}

\item[$2^\circ$] The polynomial 
\begin{equation*}
    x\mapsto \left(\varphi(x)\right)^{k-1} \left( {\varphi(x) \pi}^{(k+1)}_{m,a,b}(x)-\frac{2k}{a-b}\pi^{(k)}_{m,a,b}(x)\right)
\end{equation*} 
is not orthogonal to the constant functions on $[a,b]$, with respect to the constant weight function $\omega(x)=1$;
\vspace*{1mm}

 \item[$3^\circ$] Let each polynomial $\pi_n$ be written in the monomial basis as
\begin{equation*}
        \pi_{n}(t) = \gamma_n^{(n)} t^n + \gamma_{n-1}^{(n)} t^{n-1} + \cdots + \gamma_0^{(n)} = \sum_{\iota=0}^n \gamma_{\iota}^{(n)}t^{\iota}, \quad n \in \mathbb{N}_0.
    \end{equation*}
Then, the sum of the odd-indexed coefficients in the $k$-th derivative of $\pi_m$ is nonzero, that is
        \begin{equation*}
         \sum_{\substack{\nu = k \\ \nu \ \mathrm{odd}}}^{m} \frac{\nu!}{(\nu-k)!} \gamma_{\nu}^{(m)} \neq 0.
        \end{equation*}
 \end{enumerate}
\end{lemma}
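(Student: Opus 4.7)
The plan is to establish the two equivalences $1^\circ \Leftrightarrow 3^\circ$ and $1^\circ \Leftrightarrow 2^\circ$ separately, with the first being a direct specialization of Theorem~\ref{cor:symmcaseintgen} and the second amounting to recognizing that the polynomial appearing in statement $2^\circ$ is a total derivative.

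For $1^\circ \Leftrightarrow 3^\circ$, I will simply invoke Theorem~\ref{cor:symmcaseintgen}. Since $m$ is assumed even, one has $(-1)^{m+k+1}=(-1)^{k+1}$, so condition $1^\circ$ of that theorem reads exactly $\pi^{(k)}_{m,a,b}(b)+(-1)^{k+1}\pi^{(k)}_{m,a,b}(a)\neq 0$, which is our condition $1^\circ$. Moreover, for even $m$ the third statement in Theorem~\ref{cor:symmcaseintgen} is precisely the non-vanishing of the sum of the odd-indexed coefficients, which is exactly $3^\circ$ here. No additional computation is required.

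For $1^\circ \Leftrightarrow 2^\circ$, the main step is the observation that, using $\varphi'(x)=2/(b-a)=-2/(a-b)$, the product rule gives
\begin{equation*}
\frac{\de}{\de x}\bigl[(\varphi(x))^k \pi^{(k)}_{m,a,b}(x)\bigr]
= k(\varphi(x))^{k-1}\varphi'(x)\pi^{(k)}_{m,a,b}(x) + (\varphi(x))^k \pi^{(k+1)}_{m,a,b}(x),
\end{equation*}
which rearranges to exactly the polynomial
\begin{equation*}
(\varphi(x))^{k-1}\Bigl(\varphi(x)\pi^{(k+1)}_{m,a,b}(x)-\tfrac{2k}{a-b}\pi^{(k)}_{m,a,b}(x)\Bigr)
\end{equation*}
appearing in $2^\circ$. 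Since $\varphi(a)=-1$ and $\varphi(b)=1$, the Fundamental Theorem of Calculus then yields
\begin{equation*}
\int_a^b (\varphi(x))^{k-1}\Bigl(\varphi(x)\pi^{(k+1)}_{m,a,b}(x)-\tfrac{2k}{a-b}\pi^{(k)}_{m,a,b}(x)\Bigr)\de x
= \pi^{(k)}_{m,a,b}(b)+(-1)^{k+1}\pi^{(k)}_{m,a,b}(a).
\end{equation*}
Non-orthogonality to constants with respect to $\omega(x)=1$ is equivalent to the left-hand side being nonzero, which gives the stated equivalence with $1^\circ$.

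There is no genuinely hard step: the argument is essentially a bookkeeping exercise once one spots the total-derivative structure of the integrand in $2^\circ$. The only place where care is needed is tracking the signs arising from $\varphi'(x)=-2/(a-b)$ and from $\varphi(a)=-1$, so as to land on the precise sign $(-1)^{k+1}$ in $1^\circ$; and in citing Theorem~\ref{cor:symmcaseintgen} correctly for the evenness of $m$ so that the $(-1)^{m+k+1}$ factor collapses and the $3^\circ$ branch selects odd-indexed coefficients.
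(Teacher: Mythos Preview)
Your proposal is correct and essentially follows the paper's own proof. The only cosmetic difference is in the $1^\circ\Leftrightarrow 2^\circ$ step: the paper splits the integral and uses integration by parts to produce the boundary terms and a cancellation, whereas you recognize upfront that the integrand in $2^\circ$ is the exact $x$-derivative of $(\varphi(x))^{k}\pi^{(k)}_{m,a,b}(x)$ and apply the Fundamental Theorem of Calculus directly --- this is the same computation, presented a bit more cleanly.
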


\begin{proof} 
We fix $k\in\{0,1,\ldots,m\}$. We now prove the equivalence of the three statements.
\vspace*{1mm}

$1^\circ \Leftrightarrow 2^\circ$ We consider the integral
    \begin{equation*}
       \int_{a}^b \left(\varphi(x)\right)^{k-1}\left(\varphi(x) \pi_{m,a,b}^{(k+1)}(x)-\frac{2k}{a-b}\pi_{m,a,b}^{(k)}(x)\right){\de}x.
    \end{equation*}
    By linearity, this is equal to
\begin{equation}
   \label{snsak}
   \int_{a}^b \left(\varphi(x)\right)^{k} \pi_{m,a,b}^{(k+1)}(x){\de}x-\frac{2k}{a-b}\int_{a}^b\left(\varphi(x)\right)^{k-1}\pi_{m,a,b}^{(k)}(x){\de}x.
\end{equation} 
Integrating the first term by parts, we obtain
\begin{eqnarray}\notag
&& \int_{a}^b \left(\varphi(x)\right)^{k} \pi_{m,a,b}^{(k+1)}(x){\de}x\\ 
\notag &&\qquad\qquad=\left[ \left(\varphi(x)\right)^{k} \pi_{m,a,b}^{(k)}(x)\right]_{x=a}^{x=b}-k\int_{a}^b\left(\varphi(x)\right)^{k-1}\varphi^{\prime}(x) \pi_{m,a,b}^{(k)}(x){\de}x\\\label{smal}
&&\qquad\qquad=\pi_{m,a,b}^{(k)}(b)+ (-1)^{k+1}\pi_{m,a,b}^{(k)}(a)+\frac{2k}{a-b}\int_{a}^b\left(\varphi(x)\right)^{k-1}\pi_{m,a,b}^{(k)}(x){\de}x,
\end{eqnarray} 
 where in the last step we have used that
 \begin{equation*}
     \varphi^{\prime}(x)=\frac{2}{b-a}.
 \end{equation*} 
 Substituting~\eqref{smal} in~\eqref{snsak}, we can write
 \begin{equation*}
     \int_{a}^b \left(\varphi(x)\right)^{k-1}\left(\varphi(x) \pi_{m,a,b}^{(k+1)}(x)-\frac{2k}{a-b}\pi_{m,a,b}^{(k)}(x)\right){\de}x=\pi_{m,a,b}^{(k)}(b)+ (-1)^{k+1}\pi_{m,a,b}^{(k)}(a).
 \end{equation*}
  Therefore, the integral is nonzero if and only if
\begin{equation*}
   \pi_{m,a,b}^{(k)}(b)+ (-1)^{k+1}\pi_{m,a,b}^{(k)}(a)\neq 0.
\end{equation*}
The first equivalence is proved.

Since $m$ is even, the equivalence $1^\circ \Leftrightarrow 3^\circ$ follows by Theorem~\ref{cor:symmcaseintgen}.  
\end{proof}

In the special case $k=0$,
the previous lemma provides a direct criterion for when the triple $\mathcal{A}_{m,N}$ defines a finite element. This result is especially valuable, as condition  (c) in Theorem~\ref{cor:gencase} may be challenging to verify directly, in general.

\begin{corollary}\label{ddd}
    Let $m\in\mathbb{N}$ be an even positive integer. Let $\left\{\pi_{n}\right\}_{n \in \mathbb{N}_0}$ be a sequence of orthogonal polynomials on $[-1,1]$ with respect to a weight function $\omega$. For any interval $[a,b]$, let 
$\left\{\pi_{n,a,b}\right\}_{n\in\mathbb{N}_0}$, with $\pi_{n,a,b}(x)=\pi_n(\varphi(x))$, $n\in\mathbb{N}_0$,
denote the corresponding sequence of rescaled polynomials on $[a,b]$, defined in~\eqref{scalimps} via the change of variables $\varphi$. Then, the following statements are equivalent:
    \begin{enumerate}
        \item[$1^\circ$] The triple $\mathcal{A}_{m,N}$ defines a finite element;
        \vspace*{1mm}
        \item[$2^\circ$] $ \pi_{m,a,b}(b)-\pi_{m,a,b}(a)\neq 0$;
        \vspace*{1mm}
        \item[$3^\circ$] The polynomial $\pi_{m,a,b}^{\prime}$ is not orthogonal to the constant functions on $[a,b]$, with respect to the constant weight function $\omega(x) = 1$;
        \vspace*{1mm}
        \item[$4^\circ$] The following relation holds
        \begin{equation*}
         \sum_{\substack{\nu = 0 \\ \nu \ \mathrm{odd}}}^{m}  \gamma_{\nu}^{(m)} \neq 0.
        \end{equation*}
    \end{enumerate}
\end{corollary}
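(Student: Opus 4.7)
The plan is to split the four-way equivalence into a mechanical part, $2^\circ \Leftrightarrow 3^\circ \Leftrightarrow 4^\circ$, obtained by specializing Lemma~\ref{dddpart} to $k=0$, and a nontrivial bridge $1^\circ \Leftrightarrow 2^\circ$ obtained by combining Theorem~\ref{cor:gencase} with a short sign analysis that exploits the hypothesis that $m$ is even.

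For the first, mechanical, part I would invoke Lemma~\ref{dddpart} at $k=0$ and match its three conditions against $2^\circ$, $3^\circ$, $4^\circ$ of the corollary. Condition $1^\circ$ of the lemma reads $\pi_{m,a,b}(b) - \pi_{m,a,b}(a) \neq 0$, which is $2^\circ$ of the corollary. In condition $2^\circ$ of the lemma, the term $\frac{2k}{a-b}\pi_{m,a,b}^{(k)}$ vanishes at $k=0$ and the prefactor $(\varphi(x))^{-1}\varphi(x)$ simplifies to $1$, so the object reduces to $\pi_{m,a,b}'(x)$ and we recover $3^\circ$ of the corollary. Finally, condition $3^\circ$ of the lemma at $k=0$ becomes $\sum_{\nu \text{ odd}} \gamma_\nu^{(m)} \neq 0$, since $\nu!/(\nu-0)! = 1$, which is exactly $4^\circ$.

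The substantive step is $1^\circ \Leftrightarrow 2^\circ$. The triple $\mathcal{A}_{m,N}$ is built from the rescaled orthogonal system $\{\pi_{n,a,b}\}$ on $[a,b]$, so Theorem~\ref{cor:gencase} applies with $\pi_m$ there replaced by $\pi_{m,a,b}$. Since $m$ is even, $mN$ is automatically even, and the theorem identifies $1^\circ$ with the absolute-value condition $|\pi_{m,a,b}(b)| \neq |\pi_{m,a,b}(a)|$, equivalently $|\pi_m(1)| \neq |\pi_m(-1)|$. It then remains to check that, for even $m$, this coincides with the signed condition $\pi_m(1) \neq \pi_m(-1)$ in $2^\circ$. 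Writing $\pi_m(t) = \gamma_m^{(m)}\prod_{k=1}^m (t - \xi_{k,m})$ with all zeros strictly inside $(-1,1)$, a direct sign count gives $\pi_m(1) = \gamma_m^{(m)}\prod_{k=1}^m(1-\xi_{k,m})$ and $\pi_m(-1) = (-1)^m\gamma_m^{(m)}\prod_{k=1}^m(1+\xi_{k,m})$, both sharing the sign of $\gamma_m^{(m)}$ because $(-1)^m=1$, and both nonzero since $\pm 1$ are not roots of $\pi_m$. Hence $\pi_m(1) = -\pi_m(-1)$ is impossible, and the signed and unsigned conditions agree.

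The main obstacle to anticipate is precisely this last sign issue: in general $|\pi_m(1)|\neq|\pi_m(-1)|$ is strictly stronger than $\pi_m(1)\neq\pi_m(-1)$, so a direct invocation of Theorem~\ref{cor:gencase} would only deliver the weaker absolute-value statement. The assumption that $m$ is even enters twice, first to place us in the $mN$-even branch of Theorem~\ref{cor:gencase}, and then to force $\pi_m(\pm 1)$ to share a sign so that only the unsigned obstruction $\pi_m(1) = \pi_m(-1)$ can fail.
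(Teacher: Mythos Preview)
Your proposal is correct and follows essentially the same route as the paper: the equivalences $2^\circ \Leftrightarrow 3^\circ \Leftrightarrow 4^\circ$ are obtained by specializing Lemma~\ref{dddpart} to $k=0$, and the bridge $1^\circ \Leftrightarrow 2^\circ$ comes from Theorem~\ref{cor:gencase} together with the observation that, since $m$ is even, $\pi_{m,a,b}(a)$ and $\pi_{m,a,b}(b)$ have the same sign so the absolute-value condition collapses to the signed one. Your explicit factorization argument for the sign is a slightly more detailed version of the paper's one-line remark that $\pi_{m,a,b}(b)\pi_{m,a,b}(a)>0$.
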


\begin{proof} We prove the equivalence of the four statements.
\vspace*{1mm}

$1^\circ \Leftrightarrow 2^\circ$ We assume that $\mathcal{A}_{m,N}$ defines a finite element.  Then, by Theorem~\ref{cor:gencase}, it holds that
    \begin{equation}\label{eqsss}
            \left| \pi_{m,a,b}(b) \right| \neq \left| \pi_{m,a,b}(a) \right|.
    \end{equation}
 Since $m$ is even, the polynomial $\pi_{m,a,b}$ satisfies
\begin{equation*}
    \pi_{m,a,b}(b)\pi_{m,a,b}(a) > 0,
\end{equation*}
 so $\pi_{m,a,b}(b)$ and $\pi_{m,a,b}(a)$ have the same sign.
Thus, condition~\eqref{eqsss} is equivalent to 
\begin{equation*}
        \pi_{m,a,b}(b) \neq \pi_{m,a,b}(a).
\end{equation*}

The equivalences $2^\circ \Leftrightarrow 3^\circ \Leftrightarrow 4^\circ$ follow directly from Lemma~\ref{dddpart} in the case $k=0$.
\end{proof}

\smallskip

We now apply Corollary~\ref{ddd} to the family of Jacobi polynomials on $[-1,1]$, with $\alpha,\beta>-1$. 

\begin{example}\label{exampleJacobifirst}
Let $m \in \mathbb{N}$ be an even positive integer, and let $\bigl\{ \pi_n^{(\alpha,\beta)}\bigr\}_{n \in \mathbb{N}_0}$ denote the sequence of Jacobi polynomials on $[-1,1]$, with parameters $\alpha, \beta > -1$. For any $n \in \mathbb{N}_0$ and $t \in [-1,1]$, these polynomials satisfy the identities~\cite{szego1975orthogonal,beuchler2024recursion}
\begin{eqnarray}\label{es1}
 \pi_n^{(\alpha,\beta)}(t) &=& \frac{\Gamma(\alpha + n + 1)}{n! \Gamma(\alpha + \beta + n + 1)}
\sum_{i=0}^{n} \binom{n}{i} 
\frac{\Gamma(\alpha + \beta + n + i + 1)}{\Gamma(\alpha + i + 1)}
\left( \frac{t - 1}{2} \right)^i,\\
\pi_n^{(\alpha,\beta)}(-t)&=&(-1)^n\pi_{n}^{(\beta,\alpha)}(t). \label{es2}
\end{eqnarray}
Evaluating~\eqref{es1} at $t = 1$ and using property~\eqref{gammaprop}, we obtain
\begin{equation*}
    \pi_{m}^{(\alpha,\beta)}(1)=\frac{\Gamma(\alpha+m+1)}{m!\Gamma(\alpha+1)}=\frac{(\alpha+m)\cdots (\alpha+1)}{m!}.
\end{equation*}
By~\eqref{es2}, we also have
\begin{equation*}
    \pi_{m}^{(\alpha,\beta)}(-1)=(-1)^m\pi_{m}^{(\beta,\alpha)}(1)=\pi_{m}^{(\beta,\alpha)}(1)=\frac{\Gamma(\beta+m+1)}{m!\Gamma(\beta+1)}=\frac{(\beta+m)\cdots (\beta+1)}{m!}.
\end{equation*}
Therefore, by the Fundamental Theorem of Calculus, we get
\begin{eqnarray*}
    \int_{-1}^1 \bigl(\pi_{m}^{(\alpha,\beta)}(t)\bigr)^{\prime}{\de}t&=&\pi^{(\alpha,\beta)}_{m}(1)-\pi^{(\alpha,\beta)}_{m}(-1)\\&=&\frac{1}{m!}\left[(\alpha+m)\cdots (\alpha+1)-(\beta+m)\cdots(\beta+1) \right].
\end{eqnarray*}
Hence, we conclude that
\begin{equation*}
\int_{-1}^1 \bigl(\pi_{m}^{(\alpha,\beta)}(t)\bigr)^{\prime} {\de}t
=c_{\alpha,\beta}, 
\end{equation*}
where 
\begin{equation*}
    \begin{cases}
c_{\alpha,\beta} > 0, & \text{if }\alpha > \beta, \\
c_{\alpha,\beta} < 0, & \text{if } \alpha < \beta, \\
c_{\alpha,\beta}=0, & \text{if } \alpha = \beta.
\end{cases}
\end{equation*}
This shows, in particular, that the sign of the integral depends on the values of $\alpha$ and $\beta$. Then, by Corollary~\ref{ddd}, if $m$ is even, the triple $\mathcal{A}_{m,N}$ defines a finite element if and only if $\alpha \neq \beta$. This provides an alternative proof of Corollary~\ref{cor:jacobicase} in the case where $m$ is even. 
\end{example}

As a direct consequence of Theorem~\ref{vrthm1}, we derive a corollary for the sequence of Jacobi polynomials, yielding explicit conditions under which the triple $\mathcal{A}_{m,N}$ defines a finite element. In the following, to avoid ambiguity with the subscripts used for the parameters $\alpha$ and $\beta$, we adopt the standard notation
\begin{equation}\label{derivJac}
    \frac{\de^k}{{\de}t^k} \pi_{m}^{(\alpha,\beta)}(t)
\end{equation}
to denote the derivative of order $k$ of the Jacobi polynomial $\pi_{m}^{(\alpha,\beta)}$.

\begin{corollary}\label{vrthm}
Let $m\in\mathbb{N}$ be an even positive integer, and let $K$ be a polygon with $N$ edges. Let $\bigl\{\pi_n^{(\alpha,\beta)}\bigr\}_{n\in\mathbb{N}_0}$ be the sequence of Jacobi polynomials on $[-1,1]$ with parameters $\alpha,\beta>-1$. Using the same notation as in Theorem~\ref{vrthm1}, the following statements are equivalent:
    \begin{enumerate}
    \item[$1^\circ$] $\alpha\neq \beta$;
    \vspace*{1mm}
    \item[$2^\circ$] The triple $\mathcal{A}_{m,N}$ defines a finite element;
    \vspace*{1mm}
        \item[$3^\circ$] $\pi_m^{(\alpha,\beta)}(1)- \pi_m^{(\alpha,\beta)}(-1)\neq0$;
        \vspace*{1mm}
         \item[$4^\circ$] The coefficient $\gamma_{m-1}^{(m)} \neq 0$;
         \vspace*{1mm}
         \item[$5^\circ$] The polynomial ${\pi}_{m}^{(\alpha,\beta)}(t)$ is not orthogonal to the function $t{C}_{m}^{(\lambda+1)}(t)$ on $[-1,1]$ with respect to the weight function
        \begin{equation*}
            \omega^{(\lambda)}(t) = \left(1 - t^2\right)^{\lambda - 1/2};
        \end{equation*}
        \item[$6^\circ$] The polynomial
        \begin{equation*}
            \frac{\de^{m-1}}{{\de}t^{m-1}}\pi_{m}^{(\alpha,\beta)}(t)
        \end{equation*}
 is not orthogonal to the function $t^{m} C_{m}^{(\lambda+1)}(t)$ on $[-1,1]$ with respect to the weight function $ \omega^{(\lambda)}(t)$;
  \vspace*{1mm}      
        \item[$7^\circ$] The following condition holds
        \begin{equation*}
            \sum_{k=1}^m \xi_{k,m} \neq 0;
        \end{equation*}
        \item[$8^\circ$] The coefficients $c_k$ in the three-term recurrence relation for the monic polynomials $\bigl\{\widetilde{\pi}_n^{(\alpha,\beta)}\bigr\}_{n \in \mathbb{N}_0}$
    \begin{equation}\label{threetermrel}
            \widetilde{\pi}^{(\alpha,\beta)}_n(t) = \left(t - c_n\right) \widetilde{\pi}^{(\alpha,\beta)}_{n-1}(t) - d_n \widetilde{\pi}^{(\alpha,\beta)}_{n-2}(t), \quad n \geq 1,
        \end{equation}
         with $\widetilde{\pi}_{-1}(t) = 0$, $\widetilde{\pi}_0(t) = 1$, satisfy
        \begin{equation*}
            \sum_{k=1}^m c_k \neq 0;
        \end{equation*}
        \item[$9^\circ$] The following condition holds
      \begin{equation*}
\sum_{\substack{\nu = 0 \\ \nu \ \mathrm{odd}}}^{m} \gamma_{\nu}^{(m)} \neq 0;
\end{equation*}
\item[$10^\circ$]  The following condition holds 
\begin{equation*}
    \frac{\de^{m-1}}{{\de}t^{m-1}}\pi_{m}^{(\alpha,\beta)}(1)+ \frac{\de^{m-1}}{{\de}t^{m-1}}\pi_{m}^{(\alpha,\beta)}(-1) \neq 0.
\end{equation*}
    \end{enumerate}
\end{corollary}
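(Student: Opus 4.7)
The plan is to organize the ten statements into two blocks, establish the internal equivalences of each block from results already proved, and close the cycle by a single explicit computation for Jacobi polynomials. Consider first \textbf{Block A: $\{1^\circ, 2^\circ, 3^\circ, 5^\circ, 9^\circ\}$.} With $a=-1$, $b=1$ the change of variables $\varphi$ in~\eqref{rescab} reduces to the identity, so $\pi_{n,a,b}=\pi_n$, and Corollary~\ref{ddd} immediately yields $2^\circ \Leftrightarrow 3^\circ \Leftrightarrow 9^\circ$. The equivalence $3^\circ \Leftrightarrow 5^\circ$ is Theorem~\ref{cor:symmcaseintgen} specialised to $k=0$, $a=-1$, $b=1$, noting that for even $m$ the factor $(-1)^{m+k+1}$ equals $-1$. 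Finally, the link $1^\circ \Leftrightarrow 3^\circ$ is precisely the content of Example~\ref{exampleJacobifirst}, where
\[
\pi_m^{(\alpha,\beta)}(1)-\pi_m^{(\alpha,\beta)}(-1) = \tfrac{1}{m!}\bigl[(\alpha+m)\cdots(\alpha+1)-(\beta+m)\cdots(\beta+1)\bigr]
\]
is shown to vanish iff $\alpha=\beta$.

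\textbf{Block B: $\{4^\circ, 6^\circ, 7^\circ, 8^\circ, 10^\circ\}$.} These five conditions are exactly conditions $3^\circ$, $2^\circ$, $4^\circ$, $5^\circ$, $1^\circ$ of Theorem~\ref{vrthm1}, applied to the sequence $\{\pi_n^{(\alpha,\beta)}\}_{n\in\mathbb{N}_0}$. Their mutual equivalence is therefore immediate from that theorem, and the three-term recurrence used there is of course the Jacobi recurrence~\eqref{threetermrel}.

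\textbf{Linking the two blocks.} To close the cycle I would prove $1^\circ \Leftrightarrow 10^\circ$ by direct computation. The standard formula for iterated derivatives of Jacobi polynomials yields
\[
\frac{\de^{m-1}}{{\de}t^{m-1}}\pi_m^{(\alpha,\beta)}(t) = \frac{\Gamma(2m+\alpha+\beta)}{2^{m-1}\,\Gamma(m+\alpha+\beta+1)}\,\pi_1^{(\alpha+m-1,\beta+m-1)}(t),
\]
and together with the endpoint values $\pi_1^{(\alpha',\beta')}(1)=\alpha'+1$, $\pi_1^{(\alpha',\beta')}(-1)=-(\beta'+1)$, this gives
\[
\frac{\de^{m-1}}{{\de}t^{m-1}}\pi_m^{(\alpha,\beta)}(1)+\frac{\de^{m-1}}{{\de}t^{m-1}}\pi_m^{(\alpha,\beta)}(-1) = \frac{\Gamma(2m+\alpha+\beta)}{2^{m-1}\,\Gamma(m+\alpha+\beta+1)}(\alpha-\beta).
\]
Since $\alpha,\beta>-1$ keeps the Gamma arguments strictly positive, this expression vanishes iff $\alpha=\beta$. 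The main obstacle is identifying a clean bridge between the two blocks: a naive attempt via $9^\circ \Leftrightarrow 4^\circ$ would require showing that a full sum of odd-indexed coefficients is nonzero precisely when the single coefficient $\gamma_{m-1}^{(m)}$ is, which is not transparent; instead, routing through the derivative identity above turns the bridge into one line, after which the whole corollary reduces to bookkeeping from the earlier results.
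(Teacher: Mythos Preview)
Your proof is correct and follows essentially the same route as the paper: both reduce Block~A to Corollary~\ref{ddd} and Theorem~\ref{cor:symmcaseintgen} (with $k=0$), reduce Block~B to Theorem~\ref{vrthm1}, and bridge the two blocks by the Jacobi derivative identity $\frac{\de^{m-1}}{{\de}t^{m-1}}\pi_m^{(\alpha,\beta)}=c_{m-1}^{(\alpha,\beta)}\pi_1^{(\alpha+m-1,\beta+m-1)}$.

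The only organizational difference worth noting is economy. The paper carries out \emph{two} independent Jacobi-specific bridges, $1^\circ\Leftrightarrow4^\circ$ (via $\gamma_{m-1}^{(m)}=\tfrac{c_{m-1}^{(\alpha,\beta)}}{2(m-1)!}(\alpha-\beta)$) and a separate telescoping-sum computation for $1^\circ\Leftrightarrow8^\circ$ showing $\sum_{k=1}^m c_k=(\beta-\alpha)\tfrac{m}{2m+\alpha+\beta}$. Your route is tighter: since $8^\circ$ is already one of the five equivalent conditions in Theorem~\ref{vrthm1}, a single bridge (your $1^\circ\Leftrightarrow10^\circ$, which is the same derivative identity evaluated at the endpoints rather than at $t=0$) suffices, and the telescoping computation becomes unnecessary. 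The paper's extra computation does yield a nice closed form for $\sum c_k$, but it is not needed for the equivalence itself.
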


\begin{proof}
    We prove the equivalence of the ten statements.
 \vspace*{1mm}

$1^\circ \Leftrightarrow 2^\circ$ Since $m$ is even, by Corollary~\ref{cor:jacobicase}, $\mathcal{A}_{m,N}$ defines a finite element if and only if $\alpha \neq \beta$.
\vspace*{1mm}

$2^\circ \Leftrightarrow 3^\circ$  By Corollary~\ref{ddd}, with $a=-1$ and $b=1$, $\mathcal{A}_{m,N}$ defines a finite element if and only if 
\begin{equation*}
    \pi_m^{(\alpha,\beta)}(1)- \pi_m^{(\alpha,\beta)}(-1)\neq0.
\end{equation*}

$1^\circ \Leftrightarrow 4^\circ$ From the monomial expansion, the coefficient $\gamma_{m-1}^{(m)}$ is given by
\begin{equation}\label{gamma111}
    \gamma_{m-1}^{(m)} = \frac{1}{(m-1)!} \frac{\de^{m-1}}{{\de}t^{m-1}} \pi_m^{(\alpha,\beta)}(t) \Big|_{t=0}.
\end{equation}
Applying the derivative identity for Jacobi polynomials (see~\cite{szego1975orthogonal}), it follows that
\begin{equation*}
    \frac{\de}{{\de}t} \pi_m^{(\alpha,\beta)}(t) = c^{(\alpha,\beta)}_1 \pi_{m-1}^{(\alpha+1,\beta+1)}(t), \quad c^{(\alpha,\beta)}_1=\frac{1}{2}\left(m+\alpha+\beta+1\right)\neq 0.
\end{equation*}
Iterating this identity, we find that for any $k = 1,\dots,m$, it results
\begin{equation}\label{gamma11}
    \frac{\de^{k}}{{\de}t^{k}} \pi_m^{(\alpha,\beta)}(t) = c^{(\alpha,\beta)}_{k} \pi_{m-k}^{(\alpha_k,\beta_k)}(t),    \quad c^{(\alpha,\beta)}_{k}=\frac{\Gamma(m+k+\alpha+\beta+1)}{2^k\Gamma(m+\alpha+\beta+1)}\neq0, 
\end{equation}
where
\begin{equation*}
    \alpha_k=\alpha+k, \quad \beta_k=\beta+k.
\end{equation*}
In particular, for $k=m-1$, we have 
\begin{equation}\label{scspc}
    \frac{\de^{m-1}}{{\de}t^{m-1}} \pi_m^{(\alpha,\beta)}(t) = c^{(\alpha,\beta)}_{m-1} \pi_{1}^{(\alpha_m,\beta_m)}(t),
\end{equation}
where
\begin{equation*}
  c^{(\alpha,\beta)}_{m-1}=\frac{\Gamma(2m+\alpha+\beta)}{2^{m-1}\Gamma(m+\alpha+\beta+1)}\neq0, \quad   \alpha_m=\alpha+m-1, \quad \beta_m=\beta+m-1. 
\end{equation*}
As shown in~\cite{szego1975orthogonal}, we get
\begin{equation}\label{gamma1}
     \pi_1^{(\alpha_m,\beta_m)}(0)=\frac{1}{2}\left(\alpha-\beta\right). 
\end{equation}
By combining equations~\eqref{gamma111}, \eqref{scspc}, and~\eqref{gamma1}, we obtain
\begin{equation*}
    \gamma_{m-1}^{(m)}=\frac{c_{m-1}^{(\alpha,\beta)} }{(m-1)!}\pi_1^{(\alpha_m,\beta_m)}(0)=\frac{c_{m-1}^{(\alpha,\beta)}}{2(m-1)!}\left(\alpha-\beta\right). 
    \end{equation*}
Therefore, $\gamma_{m-1}^{(m)} \neq 0$ if and only if $\alpha \neq \beta$. 
\vspace*{1mm}

$1^\circ \Leftrightarrow 8^\circ$ The coefficients $c_k$ in the three-term recurrence relation~\eqref{threetermrel} for monic Jacobi polynomials are given by (see~\cite{milovanovic1997orthogonal})
 \begin{equation*}
  c_1=\frac{\beta-\alpha}{2+\alpha+\beta}, \quad  c_k=\frac{(\beta-\alpha)(\beta+\alpha)}{(2k+\alpha+\beta-2)(2k+\alpha+\beta)}, \quad k\ge 2.
 \end{equation*}
By summing over $k = 1, \dots, m$, we get
 \begin{align*}
     \sum_{k=1}^m c_k&= c_1+\sum_{k=2}^m c_k= \frac{\beta-\alpha}{2+\alpha+\beta}+\sum_{k=2}^m \frac{(\beta-\alpha)(\beta+\alpha)}{(2k+\alpha+\beta-2)(2k+\alpha+\beta)}\\
     &= (\beta-\alpha)\left( \frac{1}{2+\alpha+\beta}+\sum_{k=2}^m \frac{\beta+\alpha}{(2k+\alpha+\beta-2)(2k+\alpha+\beta)}\right).
 \end{align*}
We now prove that the expression
\begin{equation}\label{coslaaap}
   \frac{1}{2+\alpha+\beta}+\sum_{k=2}^m \frac{\beta+\alpha}{(2k+\alpha+\beta-2)(2k+\alpha+\beta)}
\end{equation}
is strictly positive for any $\alpha, \beta > -1$. For any $k \geq 2$, we observe that
\begin{equation*}
\frac{1}{2k+\alpha+\beta-2}-\frac{1}{2k+\alpha+\beta}
=\frac{2}{(2k+\alpha+\beta-2)(2k+\alpha+\beta)},    
\end{equation*}
which allows us to write
\begin{equation*}
\frac{\beta+\alpha}{(2k+\alpha+\beta-2)(2k+\alpha+\beta)}
=\frac{\beta+\alpha}{2}\left(\frac{1}{2k+\alpha+\beta-2}-\frac{1}{2k+\alpha+\beta}\right).    
\end{equation*}
Then, by summing over $k = 2, \dots, m$, we find
\begin{align*}
  \sum_{k=2}^m \frac{\beta+\alpha}{(2k+\alpha+\beta-2)(2k+\alpha+\beta)}&= \frac{\beta+\alpha}{2}\sum_{k=2}^m\left(\frac{1}{2k+\alpha+\beta-2}-\frac{1}{2k+\alpha+\beta}\right)\\
&=\frac{\beta+\alpha}{2}\left(\frac{1}{2+\alpha+\beta}-\frac{1}{2m+\alpha+\beta}\right).
\end{align*}
Substituting this value in~\eqref{coslaaap}, we obtain
\begin{align*}
\frac{1}{2+\alpha+\beta}+&\sum_{k=2}^m\frac{\beta+\alpha}{(2k+\alpha+\beta-2)(2k+\alpha+\beta)}\\
&= 
\frac{1}{2+\alpha+\beta}
+\frac{\beta+\alpha}{2}\left(\frac{1}{2+\alpha+\beta}-\frac{1}{2m+\alpha+\beta}\right)\\
&= \frac{1}{2} - \frac{\beta+\alpha}{2(2m+\alpha+\beta)}\\
&=\frac{m}{2m+\alpha+\beta}>0
\end{align*}
since $m\in\mathbb{N}$ and $\alpha+\beta>-2$. Therefore, we conclude that 
\begin{equation*}
    \sum_{k=1}^m c_k\neq 0
\end{equation*}
if and only if $\alpha\neq \beta$.
\vspace*{1mm}

$3^\circ \Leftrightarrow 5^\circ$ Since $m$ is even, the equivalence follows by Theorem~\ref{cor:symmcaseintgen} with $k = 0$, $a=-1$ and $b=1$.
\vspace*{1mm}

$3^\circ \Leftrightarrow 9^\circ$ The equivalence follows by Lemma~\ref{dddpart} with $k = 0$, $a=-1$ and $b=1$.
\vspace*{1mm}

The equivalences $4^\circ \Leftrightarrow 6^\circ\Leftrightarrow 7^\circ\Leftrightarrow 10^\circ$ follow from Theorem~\ref{vrthm1}.  
\end{proof}

\begin{remark}
For any $\alpha,\beta > -1$, with  $\alpha \neq \beta$, the sequence of Jacobi polynomials $\bigl\{\pi_n^{(\alpha,\beta)}\bigr\}_{n \in \mathbb{N}_0}$ satisfies the first condition of Lemma~\ref{dddpart} for all $k = 0,1, \dots, m-1$. To see this, we fix 
$k\in\{0,1,\ldots, m-1\}$. Using the notation~\eqref{derivJac} and the general derivative formula~\eqref{gamma11}, we have
\begin{equation*}
    \frac{\de^{k}}{{\de}t^{k}} \pi_m^{(\alpha,\beta)}(t) = c^{(\alpha,\beta)}_{k} \pi_{m-k}^{(\alpha_k,\beta_k)}(t), 
\end{equation*}
where
\begin{equation*}
c_k^{(\alpha,\beta)}=\frac{\Gamma(m+k+\alpha+\beta+1)}{2^k\Gamma(m+\alpha+\beta+1)}\neq0, \quad \alpha_k=\alpha+k, \quad \beta_k=\beta+k.
\end{equation*}
Evaluating this expression at the endpoints using~\eqref{jacobi_endpoints}, we obtain
\begin{align*}
 \frac{\de^{k}}{{\de}t^{k}} \pi_m^{(\alpha,\beta)}(1)&=c_{k}^{(\alpha,\beta)}\pi_{m-k}^{(\alpha_k,\beta_k)}(1) =c_{k}^{(\alpha,\beta)} \frac{\Gamma(\alpha+m+1)}{\Gamma(\alpha+k+1)\Gamma(m-k+1)}, \\[2mm]
 \frac{\de^{k}}{{\de}t^{k}} \pi_m^{(\alpha,\beta)}(-1)&=c_{k}^{(\alpha,\beta)}
\pi_{m-k}^{(\alpha_k,\beta_k)}(-1) = c_{k}^{(\alpha,\beta)} (-1)^{m-k} \frac{\Gamma(\beta+m+1)}{\Gamma(\beta+k+1)\Gamma(m-k+1)}.
\end{align*}
Since $m$ is even, using~\eqref{gammaprop}, we obtain
\begin{align*}
    \frac{\de^{k}}{{\de}t^{k}} \pi_m^{(\alpha,\beta)}(1)+&(-1)^{k+1} \frac{\de^{k}}{{\de}t^{k}} \pi_m^{(\alpha,\beta)}(-1)\\
&=\frac{c_{k}^{(\alpha,\beta)}}{\Gamma(m-k+1)}\left(\frac{\Gamma(\alpha+m+1)}{\Gamma(\alpha+k+1)}+(-1)^{m+1}\frac{\Gamma(\beta+m+1)}{\Gamma(\beta+k+1)}\right)\\ 
&=\frac{c_k^{(\alpha,\beta)}}{\Gamma(m-k+1)}\left((\alpha+m)\cdots(\alpha+k+1)-(\beta+m)\cdots(\beta+k+1)\right)
\end{align*}
which is nonzero if and only if $\alpha \ne \beta$.
\end{remark}

\begin{remark}
Let $\left\{\pi_n\right\}_{n \in \mathbb{N}_0}$ be a sequence of orthogonal polynomials on the symmetric interval $[-a,a]$, with $a > 0$, with respect to an even weight function $\omega$. Let $\left\{\widetilde{\pi}_n\right\}_{n \in \mathbb{N}_0}$ denote the corresponding sequence of monic orthogonal polynomials as defined in~\eqref{monicpol}. In this setting, it is known that all the coefficients $c_k$ in the three-term recurrence relation~\eqref{threemonic} vanish, that is,
\[
    c_k=0, \quad k\in\mathbb{N},
\]
see~\cite{milovanovic1997orthogonal}. 
\vspace*{2mm}

The Gegenbauer polynomials, with parameter $\lambda > -1/2$, correspond to Jacobi polynomials with
\begin{equation*}
    \alpha=\beta=\lambda-\frac{1}{2},
\end{equation*}
and are orthogonal with respect to the even weight function
\begin{equation*}
    \omega^{(\lambda)}(t) = \left(1 - t^2\right)^{\lambda - 1/2}.
\end{equation*}
Therefore, in this case, if $m$ is even, condition~$8^\circ$ of Corollary~\ref{vrthm} is not satisfied, and the triple $\mathcal{A}_{m,N}$ does not define a finite element. This provides an alternative proof of Corollary~\ref{cor:symmcase} in the specific case of the Gegenbauer weight.
\end{remark}

\subsection{The case of odd polynomial degree} 

To complete our analysis, we now consider the case where $m$ is odd. In this setting, Theorem~\ref{cor:gencase} implies that if $N$ is odd, then the triple $\mathcal{A}_{m,N}$ defines a finite element. Therefore, in what follows, we assume that $N$ is even.  

As in the previous case, we present a simplified version of Theorem~\ref{cor:symmcaseintgen} that is valid in this specific setting.

\begin{lemma}\label{dddpartodd}
Let $m\in\mathbb{N}$ be an odd positive integer.  Let $\left\{\pi_{n}\right\}_{n \in \mathbb{N}_0}$ be a sequence of orthogonal polynomials on $[-1,1]$, with respect to a weight function $\omega$. For any interval $[a,b]$, let $\left\{\pi_{n,a,b}\right\}_{n\in\mathbb{N}_0}$, with 
$\pi_{n,a,b}(x)=\pi_n(\varphi(x))$, $n\in\mathbb{N}_0$, denote the corresponding sequence of rescaled polynomials on $[a,b]$, defined in~\eqref{scalimps} via the change of variables $\varphi$. Then, for a fixed $k\in\{0,1,\ldots,m\}$, the following statements are equivalent:
 \begin{enumerate}
    \item[$1^\circ$] 
$\pi^{(k)}_{m,a,b}(b)+(-1)^{k}\pi_{m,a,b}^{(k)}(a) \neq 0$;
\vspace*{2mm}

\item[$2^\circ$]The polynomial 
\[x\mapsto \left(\varphi(x)\right)^k \left( {\varphi(x) \pi}^{(k+1)}_{m,a,b}(x)-\frac{2(k+1)}{a-b}\pi^{(k)}_{m,a,b}(x)\right)\] 
is not orthogonal to the constant functions on $[a,b]$, with respect to the constant weight function $\omega(x)=1$;
\vspace*{2mm}

\item[$3^\circ$] Let each polynomial $\pi_n$ be written in the monomial basis as
\begin{equation*}
        \pi_{n}(t) = \gamma_n^{(n)} t^n + \gamma_{n-1}^{(n)} t^{n-1} + \cdots + \gamma_0^{(n)} = \sum_{\iota=0}^n \gamma_{\iota}^{(n)}t^{\iota}, \quad n \in \mathbb{N}_0.
\end{equation*}

Then, the sum of the even-indexed coefficients in the $k$-th derivative of $\pi_m$ is nonzero, that is
\[
         \sum_{\substack{\nu = k \\ \nu \ \mathrm{even}}}^{m} \frac{\nu!}{(\nu-k)!} \gamma_{\nu}^{(m)} \neq 0.
\]
\end{enumerate}
\end{lemma}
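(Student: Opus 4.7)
The plan is to mirror the argument of Lemma~\ref{dddpart}, with the parity shifts dictated by $m$ being odd rather than even. I would prove $1^\circ \Leftrightarrow 2^\circ$ by a direct integration by parts, and then obtain $1^\circ \Leftrightarrow 3^\circ$ as an immediate specialization of Theorem~\ref{cor:symmcaseintgen}.

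For $1^\circ \Leftrightarrow 2^\circ$, I would consider the integral
\begin{equation*}
    I := \int_{a}^b \left(\varphi(x)\right)^{k}\left(\varphi(x)\,\pi_{m,a,b}^{(k+1)}(x)-\frac{2(k+1)}{a-b}\,\pi_{m,a,b}^{(k)}(x)\right)\de x,
\end{equation*}
split it by linearity, and apply integration by parts to $\int_a^b (\varphi(x))^{k+1}\pi_{m,a,b}^{(k+1)}(x)\,\de x$. Using $\varphi'(x)=2/(b-a)$ together with $\varphi(a)=-1$ and $\varphi(b)=1$, the boundary contribution is $(\varphi(b))^{k+1}\pi_{m,a,b}^{(k)}(b)-(\varphi(a))^{k+1}\pi_{m,a,b}^{(k)}(a)=\pi_{m,a,b}^{(k)}(b)+(-1)^{k}\pi_{m,a,b}^{(k)}(a)$, while the surviving interior term $-(k+1)\int_a^b (\varphi(x))^{k}\varphi'(x)\pi_{m,a,b}^{(k)}(x)\,\de x$ cancels exactly with the second summand of $I$. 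Hence $I=\pi_{m,a,b}^{(k)}(b)+(-1)^{k}\pi_{m,a,b}^{(k)}(a)$, and the equivalence follows because orthogonality to the constants on $[a,b]$ with respect to $\omega(x)=1$ is nothing but the vanishing of $I$.

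For $1^\circ \Leftrightarrow 3^\circ$, I would simply invoke Theorem~\ref{cor:symmcaseintgen}. Since $m$ is odd, $m+k+1\equiv k\pmod 2$, so $(-1)^{m+k+1}=(-1)^{k}$, and condition $1^\circ$ of that theorem matches verbatim condition $1^\circ$ of the present lemma. The odd branch of its condition $3^\circ$ is, in turn, condition $3^\circ$ here, which closes the chain.

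The main obstacle, though a minor one, is the careful bookkeeping of the powers of $\varphi$ and of the sign produced at $\varphi(a)=-1$. The shift from the outer factor $\varphi^{k-1}$ in Lemma~\ref{dddpart} to $\varphi^{k}$ here, and the corresponding change of the coefficient $2k/(a-b)$ to $2(k+1)/(a-b)$, is forced by the fact that the target boundary combination now carries sign $(-1)^{k}$ instead of $(-1)^{k+1}$; this is the only place where the parity of $m$ truly enters the integration by parts. Once this recalibration is in place, the proof proceeds in direct parallel with the even case.
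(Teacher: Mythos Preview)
Your proposal is correct and follows essentially the same approach as the paper: the equivalence $1^\circ \Leftrightarrow 2^\circ$ is obtained by the identical integration-by-parts computation on $\int_a^b(\varphi(x))^{k+1}\pi_{m,a,b}^{(k+1)}(x)\,\de x$, and $1^\circ \Leftrightarrow 3^\circ$ is deduced from Theorem~\ref{cor:symmcaseintgen} via the parity observation $(-1)^{m+k+1}=(-1)^k$ when $m$ is odd. The only cosmetic difference is that you make the endpoint values $\varphi(a)=-1$, $\varphi(b)=1$ explicit, whereas the paper leaves them implicit.
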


\begin{proof} 
We fix $k\in\{0,1,\ldots,m\}$ and prove the equivalence of the three statements.
\vspace*{2mm}

$1^\circ \Leftrightarrow 2^\circ$  We consider the following integral
\begin{equation*}
    \int_{a}^b \left(\varphi(x)\right)^k\left(\varphi(x) \pi_{m,a,b}^{(k+1)}(x)-\frac{2(k+1)}{a-b}\pi_{m,a,b}^{(k)}(x)\right){\de}x.
\end{equation*}
By linearity, this is equal to
\begin{equation}\label{snsak1}
    \int_{a}^b \left(\varphi(x)\right)^{k+1}\pi_{m,a,b}^{(k+1)}(x){\de}x-\frac{2(k+1)}{a-b}\int_{a}^b\left(\varphi(x)\right)^k\pi_{m,a,b}^{(k)}(x){\de}x.
\end{equation}
Integrating the first term by parts, we obtain
\begin{align}\label{smal1}
\notag
  \int_{a}^b \left(\varphi(x)\right)^{k+1}&\pi_{m,a,b}^{(k+1)}(x){\de}x  \\ \notag =& \left[ \left(\varphi(x)\right)^{k+1} \pi_{m,a,b}^{(k)}(x)\right]_{x=a}^{x=b}-(k+1)\int_{a}^b\left(\varphi(x)\right)^k\varphi^{\prime}(x) \pi_{m,a,b}^{(k)}(x){\de}x\\
    =& \pi_{m,a,b}^{(k)}(b)+(-1)^{k}\pi_{m,a,b}^{(k)}(a)+\frac{2(k+1)}{a-b}\int_{a}^b \left(\varphi(x)\right)^k \pi_{m,a,b}^{(k)}(x){\de}x,
\end{align} 
 where in the last step we have used that
 \begin{equation*}
     \varphi^{\prime}(x)=\frac{2}{b-a}.
 \end{equation*} 
 Substituting~\eqref{smal1} in~\eqref{snsak1}, we can write
 \begin{equation*}
    \int_{a}^b \left(\varphi(x)\right)^k\left(\varphi(x) \pi_{m,a,b}^{(k+1)}(x)-\frac{2(k+1)}{a-b}\pi_{m,a,b}^{(k)}(x)\right){\de}x= \pi_{m,a,b}^{(k)}(b)+(-1)^{k}\pi_{m,a,b}^{(k)}(a).
\end{equation*}
 Therefore, the integral is nonzero if and only if 
\begin{equation*}
    \pi_{m,a,b}^{(k)}(b)+(-1)^{k}\pi_{m,a,b}^{(k)}(a)\neq 0.
\end{equation*}
The first equivalence is proved. 

Since $m$ is odd, the equivalence $(1)\Leftrightarrow(3)$ follows by Theorem~\ref{cor:symmcaseintgen}. 
\end{proof}

In the special case $k=0$, the previous lemma provides a direct criterion for when the triple $\mathcal{A}_{m,N}$ defines a finite element. This result is especially valuable, as condition (c) in Theorem~\ref{cor:gencase} may be challenging to verify directly, in general.

\begin{corollary}\label{ddd1}
   Let $m\in\mathbb{N}$ be an odd positive integer.  Let $\left\{\pi_{n}\right\}_{n \in \mathbb{N}_0}$ be a sequence of orthogonal polynomials on $[-1,1]$, with respect to a weight function $\omega$. For any interval $[a,b]$, let 
$\left\{\pi_{n,a,b}\right\}_{n\in\mathbb{N}_0}$,   with      
$\pi_{n,a,b}(x)=\pi_n(\varphi(x))$, $n\in\mathbb{N}_0$,
denote the corresponding sequence of rescaled polynomials on $[a,b]$, defined in~\eqref{scalimps} via the change of variables $\varphi$. Then, the following statements are equivalent:
\begin{enumerate}
\item[$1^\circ$] The triple $\mathcal{A}_{m,N}$ defines a finite element;
\vspace*{1mm}
\item[$2^\circ$] $\pi_{m,a,b}(b) + \pi_{m,a,b}(a)\neq 0$;
\vspace*{1mm}
\item[$3^\circ$] The polynomial 
\begin{equation*}
           \widetilde{p}_{m}(x)=\frac{a+b-2x}{a-b}
        \pi_{m,a,b}^{\prime}(x)-\frac{2}{a-b}\pi_{m,a,b}(x)
        \end{equation*}
        is not orthogonal to the constant functions on $[a,b]$, with respect to the constant weight function $\omega(x)=1$;
        \vspace*{1mm}
\item[$4^\circ$] The following relation holds
        \begin{equation*}
         \sum_{\substack{\nu = 0 \\ \nu \ \mathrm{even}}}^{m}  \gamma_{\nu}^{(m)} \neq 0.
        \end{equation*}
    \end{enumerate}
\end{corollary}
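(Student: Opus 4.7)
The plan is to combine Theorem~\ref{cor:gencase} for the equivalence $1^\circ\Leftrightarrow 2^\circ$ with Lemma~\ref{dddpartodd} specialized to $k=0$ for the remaining equivalences $2^\circ\Leftrightarrow 3^\circ \Leftrightarrow 4^\circ$. Since the case $N$ odd has already been handled at the start of the subsection, we may assume $N$ is even; in particular $mN$ is even, so item $2^\circ$ of Theorem~\ref{cor:gencase} applies and tells us that $\mathcal{A}_{m,N}$ defines a finite element if and only if $\left|\pi_{m,a,b}(b)\right|\neq \left|\pi_{m,a,b}(a)\right|$. The task is thus to rewrite this condition as $\pi_{m,a,b}(b)+\pi_{m,a,b}(a)\neq 0$.

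The key observation for this reformulation is a sign argument that uses the parity of $m$. Writing the rescaled polynomial in its factored form via the affine map $\varphi$, i.e.
\begin{equation*}
\pi_{m,a,b}(x)=\gamma_m^{(m)}\prod_{k=1}^{m}\bigl(\varphi(x)-\xi_{k,m}\bigr),
\end{equation*}
and recalling that the zeros $\xi_{k,m}$ lie in $(-1,1)$, we have $\varphi(a)=-1$ and $\varphi(b)=1$, so each factor $1-\xi_{k,m}$ is positive while each factor $-1-\xi_{k,m}$ is negative. Therefore $\pi_{m,a,b}(b)$ and $(-1)^m \pi_{m,a,b}(a)$ carry the same sign; since $m$ is odd, this means $\pi_{m,a,b}(a)$ and $\pi_{m,a,b}(b)$ have opposite signs. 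Consequently the equality $|\pi_{m,a,b}(b)|=|\pi_{m,a,b}(a)|$ is equivalent to $\pi_{m,a,b}(b)=-\pi_{m,a,b}(a)$, i.e.\ to $\pi_{m,a,b}(b)+\pi_{m,a,b}(a)=0$. Negating yields $1^\circ\Leftrightarrow 2^\circ$.

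For the remaining equivalences, I would simply specialize Lemma~\ref{dddpartodd} to $k=0$. Item $1^\circ$ of that lemma becomes exactly $\pi_{m,a,b}(b)+\pi_{m,a,b}(a)\neq 0$, which is $2^\circ$ of the corollary. Item $2^\circ$ of the lemma becomes the statement that
\begin{equation*}
 \varphi(x)\,\pi_{m,a,b}^{\prime}(x)-\frac{2}{a-b}\pi_{m,a,b}(x)=\frac{a+b-2x}{a-b}\pi_{m,a,b}^{\prime}(x)-\frac{2}{a-b}\pi_{m,a,b}(x)=\widetilde{p}_m(x)
\end{equation*}
is not orthogonal to the constants on $[a,b]$ with unit weight, which is $3^\circ$. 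Finally, item $3^\circ$ of the lemma with $k=0$ collapses (since $\nu!/\nu!=1$) to $\sum_{\nu=0,\ \nu\ \mathrm{even}}^{m}\gamma_\nu^{(m)}\neq 0$, which is $4^\circ$.

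The only nontrivial step is the sign argument in the first paragraph; everything else is a bookkeeping application of a previously established lemma and theorem. I do not anticipate any real obstacle, since the heavy analytical work (the integration-by-parts identity that gives $2^\circ\Leftrightarrow 3^\circ$ and the monomial-coefficient expansion that gives $3^\circ\Leftrightarrow 4^\circ$) has already been carried out in Lemma~\ref{dddpartodd}.
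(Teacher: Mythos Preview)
Your proposal is correct and follows essentially the same approach as the paper: the equivalence $1^\circ\Leftrightarrow 2^\circ$ is obtained from Theorem~\ref{cor:gencase} together with the sign observation that, since $m$ is odd, $\pi_{m,a,b}(a)$ and $\pi_{m,a,b}(b)$ have opposite signs, and the remaining equivalences are exactly Lemma~\ref{dddpartodd} specialized to $k=0$. The only cosmetic difference is that you spell out the factored-form justification for the opposite-sign claim, whereas the paper simply asserts $\pi_{m,a,b}(b)\pi_{m,a,b}(a)<0$.
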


\begin{proof}
We prove the equivalence of the four statements.
\vspace*{2mm}

$1^\circ \Leftrightarrow 2^\circ$
    We assume that $\mathcal{A}_{m,N}$ defines a finite element.  Then, by Theorem~\ref{cor:gencase}, it holds that
    \begin{equation}\label{eqsss1}
            \left| \pi_{m,a,b}(b) \right| \neq \left| \pi_{m,a,b}(a) \right|.
    \end{equation}
 Since $m$ is odd, the polynomial $\pi_{m,a,b}$ satisfies
\begin{equation*}
    \pi_{m,a,b}(b)\pi_{m,a,b}(a) < 0,
\end{equation*}
 so $\pi_{m,a,b}(b)$ and $\pi_{m,a,b}(a)$ have the opposite sign.
Thus, condition~\eqref{eqsss1} is equivalent to
\begin{equation*}
        \pi_{m,a,b}(b) \neq -\pi_{m,a,b}(a).
\end{equation*}

The equivalences $2^\circ \Leftrightarrow 3^\circ\Leftrightarrow 4^\circ$  follow directly from Lemma~\ref{dddpartodd} in the case $k=0$. 
\end{proof}

\begin{remark}
For any parameters $\alpha,\beta > -1$, such that $\alpha \neq \beta$, the sequence of Jacobi polynomials $\bigl\{\pi_n^{(\alpha,\beta)}\bigr\}_{n \in \mathbb{N}_0}$ satisfies the first condition of Lemma~\ref{dddpartodd} for all $k = 0, 1,\ldots, m-1$. To see this, we fix $k \in \{0,1, \ldots, m-1\}$. Using the notation~\eqref{derivJac} and the general derivative formula~\eqref{gamma11}, we have
\begin{equation*}
    \frac{\de^{k}}{{\de}t^{k}} \pi_m^{(\alpha,\beta)}(t) = c^{(\alpha,\beta)}_{k} \pi_{m-k}^{(\alpha_k,\beta_k)}(t), 
\end{equation*}
where
\begin{equation*}
 c_k^{(\alpha,\beta)}=\frac{\Gamma(m+k+\alpha+\beta+1)}{2^k\Gamma(m+\alpha+\beta+1)}\neq0, \quad \alpha_k=\alpha+k, \quad \beta_k=\beta+k.
\end{equation*}
Evaluating this expression at the endpoints using~\eqref{jacobi_endpoints}, we obtain
\begin{align*}
 \frac{\de^{k}}{{\de}t^{k}} \pi_m^{(\alpha,\beta)}(1)&=c_{k}^{(\alpha,\beta)}\pi_{m-k}^{(\alpha+k,\beta+k)}(1) =c_{k}^{(\alpha,\beta)} \frac{\Gamma(\alpha+m+1)}{\Gamma(\alpha+k+1)\Gamma(m-k+1)}, \\[2mm]
 \frac{\de^{k}}{{\de}t^{k}} \pi_m^{(\alpha,\beta)}(-1)&=c_{k}^{(\alpha,\beta)}
\pi_{m-k}^{(\alpha+k,\beta+k)}(-1) = c_{k}^{(\alpha,\beta)} (-1)^{m-k} \frac{\Gamma(\beta+m+1)}{\Gamma(\beta+k+1)\Gamma(m-k+1)}.
\end{align*}
Since $m$ is odd, using~\eqref{gammaprop}, we obtain
\begin{align*}
   \frac{\de^{k}}{{\de}t^{k}} \pi_m^{(\alpha,\beta)}(1)+&(-1)^k \frac{d^{k}}{{\de}t^{k}} \pi_m^{(\alpha,\beta)}(-1)\\
&=\frac{c_{k}^{(\alpha,\beta)}}{\Gamma(m-k+1)}\left(\frac{\Gamma(\alpha+m+1)}{\Gamma(\alpha+k+1)}+(-1)^{m}\frac{\Gamma(\beta+m+1)}{\Gamma(\beta+k+1)}\right)\\
&=\frac{c_k^{(\alpha,\beta)}}{\Gamma(m-k+1)}\left((\alpha+m)\cdots(\alpha+k+1)-(\beta+m)\cdots(\beta+k+1)\right),
\end{align*}
which is nonzero if and only if $\alpha \ne \beta$.
\end{remark}


\section{Enrichment process}
\label{sec2}

We now consider the case where the triple $ \mathcal{A}_{m,N}$ does not define a finite element. According to Theorem~\ref{cor:gencase}, this happens if and only if $mN$ is even and
\begin{equation*}
    \left|\pi_m(b)\right|= \left|\pi_m(a)\right|.
\end{equation*}

In this case, we propose an enrichment strategy to enhance $\mathcal{A}_{m,N}$ and ensure that it defines a finite element. This technique is commonly used to improve the approximation properties of finite elements by means of enrichment functions and enriched linear functionals; see, e.g., \cite{Guessab:2017:AUA, Guessab:2022:SAB, DellAccio:2022:AUE, DellAccio2023AGC, DellAccio:2022:ESF, DellAccioCANWA, nudosolo, nudosolo2, DellAccio2025new, dell2025truncated}. In our framework, this approach is specifically designed to ensure the unisolvence of the enriched triple. To this end, without loss of generality, we assume that the basis $\mathcal{B}_m$ of the space $\mathcal{S}_{m-1}(\partial K)$, which defines the degrees of freedom of the triple $\mathcal{A}_{m,N}$, satisfies the following structural property. We adopt a two-index notation for the elements $b_j \in \mathcal{B}_m$, namely
\begin{equation*}
   b_j=b_{\mu\nu}, \quad \mu=1,\ldots,N, \quad \nu=0,1,\ldots,m-1,
\end{equation*}
such that, for any $i = 1, \ldots, N$, the following relation holds
\begin{equation*}
\widetilde{\gamma}_i\left(b_{\mu\nu}\right)(x)={\widehat{\pi}_{\nu}(x)}\delta_{i\mu}, \quad \mu=1,\ldots,N, \quad \nu=0,1,\ldots,m-1,  
\end{equation*}
where $\left\{\widehat{\pi}_n\right\}_{n\in\mathbb{N}_0}$ is the sequence of orthonormal polynomials on $[a,b]$ with respect to the weight function $\omega$. 
Accordingly, we define the inner product
\begin{equation*}
\left\langle f,g \right\rangle_{\partial K}=\sum_{i=1}^N\int_{a}^b \widetilde{\gamma}_i\left(f\right)(x)\widetilde{\gamma}_i\left(g\right)(x)\omega(x) {\de}x, \quad f,g\in  \mathcal{U}(\partial K),
\end{equation*}
under which the basis functions are orthonormal, that is,
\begin{equation}\label{ortbl}
\left\langle b_j,b_l \right\rangle_{\partial K}=\delta_{jl}, \quad j,l=1,\ldots,mN.
\end{equation}
We notice that each linear functional in $\Sigma_{\partial K}$ can equivalently be written as
\begin{equation*}
\mathcal{L}_j(p)=\left\langle p,b_j \right\rangle_{\partial K}, \quad j=1,\dots,mN.
\end{equation*}

\begin{remark}\label{coslpa1}
By definition
\begin{equation*}
b_j \notin  \mathcal{S}_m^0(\partial  K), \quad j = 1, \dots, mN,
\end{equation*}
since $\pi_n(a)\pi_n(b) \neq 0$ for any $n\in\mathbb{N}_0$.
\end{remark}

In this setting, we introduce the enriched linear functional
\begin{equation}\label{deff}
\mathcal{F}:f\in \mathcal{S}_m^0(\partial  K)\rightarrow \sum_{i=1}^{N} \left(\frac{\pi_m(b)}{\pi_m(a)}\right)^{i-1} \int_{a}^b \widetilde{\gamma}_i(f)(x) \pi_{m}(x) \omega(x) {\de}x. 
\end{equation}

To proceed with our analysis, we establish the following key lemma.
\begin{lemma}\label{fffffff}
   Let $f\in \mathcal{S}_m^0(\partial  K)$ be such that
       \begin{align} \label{newss}
        \mathcal{L}_j(f)&=0, \quad j=1,\ldots,mN, \\[1mm] 
        \mathcal{F}(f)&=0. \label{fcond}
    \end{align}
    Then $f=0$.
\end{lemma}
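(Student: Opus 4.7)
The plan is to re-use the first half of the proof of Theorem~\ref{th1} to reduce $f$ to a one-parameter family, and then exploit the extra functional $\mathcal{F}$ to kill the remaining parameter.

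First I would observe that, since $\mathcal{B}_m$ is a basis of $\mathcal{S}_{m-1}(\partial K)$ and the functionals $\mathcal{L}_j$ are weighted $L^2$-inner products against the edgewise-supported $b_{\mu\nu} = \widehat{\pi}_\nu \delta_{i\mu}$, condition~\eqref{newss} is equivalent to
\begin{equation*}
\int_a^b \widetilde{\gamma}_i(f)(x)\, q(x)\, \omega(x)\, \mathrm{d}x = 0 \quad \forall\, q\in\mathbb{P}_{m-1}([a,b]),\ i=1,\dots,N.
\end{equation*}
Thus each $\widetilde{\gamma}_i(f)\in\mathbb{P}_m([a,b])$ is proportional to $\pi_m$, i.e.\ $\widetilde{\gamma}_i(f) = k_i\pi_m$ for suitable constants $k_i\in\mathbb{R}$. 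The continuity of $f$ at each shared vertex then yields, exactly as in the proof of Theorem~\ref{th1}, the recurrence $k_{i+1}=r\, k_i$ with $r:=\pi_m(b)/\pi_m(a)$, so that
\begin{equation*}
k_i = r^{\,i-1} k_1, \quad i=1,\dots,N.
\end{equation*}

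Next I would substitute this expression into $\mathcal{F}(f)$ defined in~\eqref{deff}. Since $\widetilde{\gamma}_i(f) = k_i\pi_m = r^{i-1}k_1\pi_m$, a direct computation gives
\begin{equation*}
\mathcal{F}(f) = k_1 \left(\int_a^b \pi_m(x)^2\,\omega(x)\,\mathrm{d}x\right)\sum_{i=1}^{N} r^{2(i-1)}.
\end{equation*}
The key point is that the lemma is used only in the enrichment context, where $\mathcal{A}_{m,N}$ does \emph{not} define a finite element. By Theorem~\ref{th1} this means $r^N = 1$, and since $r$ is real (being a ratio of real polynomial values at real points, both nonzero because all roots of $\pi_m$ lie in $(a,b)$), we must have $r\in\{-1,+1\}$. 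In either case $r^2 = 1$, and therefore the geometric sum collapses:
\begin{equation*}
\sum_{i=1}^{N} r^{2(i-1)} = N.
\end{equation*}

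Combining the last two displays and using the fact that $\int_a^b \pi_m(x)^2\,\omega(x)\,\mathrm{d}x > 0$ (the squared weighted $L^2$-norm of $\pi_m$) and $N\ge 3$, the hypothesis~\eqref{fcond} forces $k_1 = 0$, whence $k_i = 0$ for all $i$ and consequently $f=0$. The only subtle step is the reality argument that reduces $r^N = 1$ to $r = \pm 1$; this is where the enrichment construction~\eqref{deff} is tailored precisely so that the $r^{2(i-1)}$ factors coming from the geometric recurrence combine with the weights $r^{i-1}$ in $\mathcal{F}$ to give a nonzero total mass, regardless of whether $r=+1$ or $r=-1$.
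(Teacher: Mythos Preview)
Your proof is correct and follows essentially the same approach as the paper's. The one minor difference is that the paper observes directly that $\sum_{i=1}^{N} r^{2(i-1)} > 0$ because each term is the square of a real number and the $i=1$ term equals $1$; your detour through $r^N=1\Rightarrow r=\pm 1$ (and the appeal to the enrichment context) is therefore unnecessary, and dropping it makes the lemma hold without assuming that $\mathcal{A}_{m,N}$ fails to be a finite element.
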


\begin{proof}
As in the proof of Theorem~\ref{th1}, any $f \in  \mathcal{S}_m^0(\partial  K)$ satisfying~\eqref{newss} must be of the form
\begin{equation*}
\widetilde{\gamma}_i(f)(x)=k_i\pi_m(x)=\left(\frac{\pi_m(b)}{\pi_m(a)}\right)^{i-1}k_1 \pi_m(x), \quad x\in[a,b], \quad i=1,\ldots,N, 
\end{equation*}
where $k_1\in\mathbb{R}$.
By applying condition~\eqref{fcond}, we obtain
\begin{equation*}
    0=\mathcal{F}(f)=  \sum_{i=1}^{N} \left(\frac{\pi_m(b)}{\pi_m(a)}\right)^{2(i-1)}k_1 \int_{a}^b \left(\pi_m(x)\right)^2  \omega(x) {\de}x= k_1 \left\lVert \pi_m \right\rVert_{2,\omega}^2\sum_{i=1}^{N} \left(\frac{\pi_m(b)}{\pi_m(a)}\right)^{2(i-1)},
\end{equation*}
where 
\begin{equation*}
    \left\lVert \pi_m\right\rVert_{2,\omega}^2=\left\langle \pi_m,\pi_m\right\rangle_{\omega}=\int_{a}^b \left(\pi_m(x)\right)^2 \omega(x) {\de}x
\end{equation*}
is the weighted $L^2$-norm. Since
\begin{equation*}
    \left\lVert \pi_m \right\rVert_{2,\omega}^2 > 0 \quad \text{and} \quad \sum_{i=1}^{N} \left( \frac{\pi_m(b)}{\pi_m(a)} \right)^{2(i-1)} > 0,
\end{equation*}
it follows that $k_1 = 0$. Therefore $f = 0$.  
\end{proof}

We now fix a basis of $ \mathcal{S}_m^0(\partial  K)$, denoted by
\begin{equation*}
\mathcal{B}^{\prime}_{m}=\left\{q_1,\ldots,q_{mN}\right\}.
\end{equation*}
Since the triple $\mathcal{A}_{m,N}$ does not define a finite element, there exists a nonzero vector 
\begin{equation*}
\mathbf{c}=\left[c_1,\ldots,c_{mN}\right]^T \neq [0,\ldots,0]^T   
\end{equation*}
such that the function
\begin{equation*}
   p=\sum_{l=1}^{mN}c_l q_l\neq 0,
\end{equation*}
satisfies the conditions 
\begin{equation*}
    \mathcal{L}_j(p)=\left\langle p,b_j \right\rangle_{\partial K}=\sum_{i=1}^N\int_{a}^b \widetilde{\gamma}_i\left(p\right)(x)\widetilde{\gamma}_i\left(b_j\right)(x)\omega(x) {\de}x=0, \quad j=1,\dots,mN.
\end{equation*}
Therefore, $\mathbf{c}$ is a nontrivial solution of the homogeneous linear system
\begin{equation*}
    A\mathbf{c}=[0,\dots,0]^T,
\end{equation*}
where
 \begin{equation}\label{pr1}
   A=\begin{bmatrix}
\left\langle q_1,b_1 \right\rangle_{\partial K} & \left\langle q_2,b_1 \right\rangle_{\partial K} & \cdots & \left\langle q_{mN},b_1 \right\rangle_{\partial K}\\
\left\langle q_1,b_2 \right\rangle_{\partial K} & \left\langle q_2,b_2 \right\rangle_{\partial K} & \cdots & \left\langle q_{mN},b_2 \right\rangle_{\partial K}\\
\vdots  & \vdots  & \ddots & \vdots  \\
\left\langle q_1,b_{mN} \right\rangle_{\partial K} & \left\langle q_2,b_{mN} \right\rangle_{\partial K} & \cdots & \left\langle q_{mN},b_{mN} \right\rangle_{\partial K}\\
\end{bmatrix}.
\end{equation}
Consequently, the matrix $A$ is singular.
\vspace*{2mm}

The next lemma plays a central role in the enrichment process.

\begin{lemma}\label{lemmaslsa}
There exists a vector
\begin{equation*}
\boldsymbol{d} = \left[d_1,\ldots,d_{mN}\right]^T \neq [0,\ldots,0]^T
\end{equation*}
such that the linear functional
\begin{equation}\label{defG}
\mathcal{G} = \sum_{j=1}^{mN} d_j \mathcal{L}_j
\end{equation}
annihilates all functions in $ \mathcal{S}_m^0(\partial  K)$, that is,
\begin{equation*}
\mathcal{G}(p) = 0\quad \text{for any}   
\ p\in\mathcal{S}_m^0(\partial  K). 
\end{equation*}
\end{lemma}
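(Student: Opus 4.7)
The plan is to deduce the existence of the functional $\mathcal{G}$ directly from the singularity of the matrix $A$ established in~\eqref{pr1}. Because $\mathcal{A}_{m,N}$ does not define a finite element, we already know that $A\mathbf{c}=\mathbf{0}$ has a nontrivial solution, so $\det A=0$. Since $\det A^T=\det A=0$, the transposed system $A^T \mathbf{d}=\mathbf{0}$ must also admit a nontrivial solution. I will fix such a vector $\mathbf{d}=[d_1,\ldots,d_{mN}]^T \neq \mathbf{0}$ and use it to build the functional $\mathcal{G}$ in~\eqref{defG}.

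The next step is to translate $A^T \mathbf{d} = \mathbf{0}$ into a statement about $\mathcal{G}$ acting on the basis $\mathcal{B}^{\prime}_m = \{q_1, \ldots, q_{mN}\}$. By definition of $A$, the $l$-th entry of $A^T \mathbf{d}$ equals $\sum_{j=1}^{mN} d_j \langle q_l, b_j\rangle_{\partial K} = \sum_{j=1}^{mN} d_j \mathcal{L}_j(q_l) = \mathcal{G}(q_l)$. Hence $A^T \mathbf{d}=\mathbf{0}$ is equivalent to $\mathcal{G}(q_l)=0$ for every $l=1,\ldots,mN$.

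Finally, since $\mathcal{B}^{\prime}_m$ is a basis of $\mathcal{S}_m^0(\partial K)$, any $p\in \mathcal{S}_m^0(\partial K)$ can be written as $p=\sum_{l=1}^{mN} \alpha_l q_l$ for suitable scalars $\alpha_l$, and the linearity of $\mathcal{G}$ yields $\mathcal{G}(p)=\sum_{l=1}^{mN} \alpha_l \mathcal{G}(q_l)=0$. This gives the desired annihilation property on the whole space $\mathcal{S}_m^0(\partial K)$ and concludes the proof. No genuine obstacle is expected here: the argument is essentially the elementary linear-algebra fact that a square matrix and its transpose share the same rank, used to pass from a nontrivial element in $\ker A$ (which encodes the failure of unisolvence of $\mathcal{A}_{m,N}$) to a nontrivial element in $\ker A^T$ (which encodes a linear dependence among the functionals $\mathcal{L}_1,\ldots,\mathcal{L}_{mN}$ on the finite-dimensional space $\mathcal{S}_m^0(\partial K)$).
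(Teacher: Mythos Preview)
Your argument is correct and essentially identical to the paper's own proof: both rely on the singularity of $A$ (established just before the lemma) to obtain a nontrivial $\boldsymbol{d}\in\ker(A^T)$, then identify the $l$-th component of $A^T\boldsymbol{d}$ with $\mathcal{G}(q_l)$ and extend by linearity using the basis $\mathcal{B}'_m$. The only cosmetic difference is the order of presentation---you start from the existence of $\boldsymbol{d}$ and move forward, while the paper reduces the claim to finding $\boldsymbol{d}$ and then invokes singularity.
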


\begin{proof}
Since $\mathcal{B}^{\prime}_{m}=\left\{q_1,\dots,q_{mN}\right\}$ is a basis of $\mathcal{S}_m^0(\partial K)$, it is sufficient to prove that 
\begin{equation*}
\mathcal{G}\left(q_l\right) = \sum_{j=1}^{mN} d_j \mathcal{L}_j\left(q_l\right) =0 , \quad l = 1, \dots, mN,
\end{equation*}
or, equivalently 
\begin{equation*}
    \sum_{j=1}^{mN} d_j \left\langle q_l,b_j\right\rangle_{\partial K}= 0, \quad l = 1, \dots, mN.
\end{equation*}
These conditions define a linear system for the components of $\boldsymbol{d}$, which can be written as
\begin{equation*}
  \left\{\begin{array}{cl}
d_1 \left\langle q_1, b_1\right\rangle_{\partial K}+\dots+d_{mN} \left\langle q_1, b_{mN}\right\rangle_{\partial K}&=0, \\[2mm]
d_1 \left\langle q_2, b_1\right\rangle_{\partial K}+\cdots+d_{mN} \left\langle q_2, b_{mN}\right\rangle_{\partial K}&=0, \\[2mm]
&\ \vdots \\
d_1 \left\langle q_{mN}, b_1\right\rangle_{\partial K}+\cdots+d_{mN} \left\langle q_{mN}, b_{mN}\right\rangle_{\partial K}&=0,
\end{array}\right.
\end{equation*}
or in matrix form as  
\begin{equation*}
    A^{T}\boldsymbol{d}=[0,\dots,0]^T, 
\end{equation*}
where $A$ is the matrix defined in~\eqref{pr1}. Since $A$ is singular, its transpose $A^T$ is also singular. It follows that the kernel of $A^T$ is nontrivial and hence there exists a nonzero vector $\boldsymbol{d} \in \ker\left(A^T\right)$, which proves the claim.  
\end{proof}

In the following, we consider the linear functional $\mathcal{G}$ defined in~\eqref{defG} and assume, without loss of generality, that
\begin{equation*}
    d_1\neq 0.
\end{equation*}
Under this assumption and in order to enrich our finite element space, we define
\begin{equation}\label{fhat}
\widetilde{f}=b_{1}.
\end{equation}
By Remark~\ref{coslpa1}, it follows that
\begin{equation*}
\widetilde{f}\notin{\mathcal{S}}^0_m(\partial K).
\end{equation*}
We then define the enriched space
\begin{equation*}
\widetilde{\mathcal{S}}^0_m(\partial K)= \mathcal{S}_m^0(\partial  K)\oplus \bigl\{\widetilde{f}\bigr\},
\end{equation*}
so that
\begin{equation*}
\operatorname{dim}\left(\widetilde{\mathcal{S}}^0_m(\partial K)\right)=mN+1.
\end{equation*}

\begin{remark}
    Using the property~\eqref{ortbl}, we observe that 
    \begin{equation}\label{dsaaa}
        \mathcal{G}\bigl(\widetilde{f}\,\bigr)= \mathcal{G}\left( b_{1}\right)=\sum_{j=1}^{mN}d_j \mathcal{L}_j\left( b_{1}\right)=\sum_{j=1}^{mN}d_j \left\langle b_1, b_j\right\rangle_{\partial K}=\sum_{j=1}^{mN}d_j \delta_{1j}=d_{1}\neq 0. 
    \end{equation}
\end{remark}

In the following, we assume that the functionals $\mathcal{L}_j$, $j = 1, \ldots, mN$, and $\mathcal{F}$ admit extensions to the space $\widetilde{\mathcal{S}}^0_m(\partial K)$, and we write
\begin{equation*}
    \mathcal{L}_j:\widetilde{\mathcal{S}}^0_m(\partial K)\rightarrow\mathbb{R},  \quad \mathcal{F}:\widetilde{\mathcal{S}}^0_m(\partial K)\rightarrow\mathbb{R}, \quad j=1,\dots,mN.
\end{equation*}
Under this assumption, we define the enriched triple
\begin{equation*}
\widetilde{\mathcal{A}}_{m,N}=\bigl(\partial K, \widetilde{\mathcal{S}}^0_m(\partial K), \widetilde{\Sigma}_{\partial K}\bigr),
\end{equation*}
where
\begin{equation*}
\widetilde{\Sigma}_{\partial K}=\left\{\mathcal{L}_j, \mathcal{F}\, :\, j=1,\ldots, mN\right\}.
\end{equation*}

Finally, we establish the following key result.

\begin{theorem}
The triple $\widetilde{\mathcal{A}}_{m,N}$ defines a finite element.
\end{theorem}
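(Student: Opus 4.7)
The plan is to check unisolvence directly by counting and then splitting. Since $\dim \widetilde{\mathcal{S}}^0_m(\partial K) = mN+1 = \operatorname{card}\bigl(\widetilde{\Sigma}_{\partial K}\bigr)$, it suffices to show that any $p \in \widetilde{\mathcal{S}}^0_m(\partial K)$ satisfying
\begin{equation*}
\mathcal{L}_j(p)=0,\quad j=1,\ldots,mN,\qquad \mathcal{F}(p)=0,
\end{equation*}
must be the zero function.

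First I would decompose $p$ along the direct sum $\widetilde{\mathcal{S}}^0_m(\partial K)= \mathcal{S}_m^0(\partial K)\oplus \bigl\{\widetilde{f}\,\bigr\}$, writing uniquely
\begin{equation*}
p = p_0 + \alpha\,\widetilde{f},\qquad p_0\in \mathcal{S}_m^0(\partial K),\ \alpha\in\mathbb{R}.
\end{equation*}
The goal is to isolate the coefficient $\alpha$ via the auxiliary functional $\mathcal{G}=\sum_{j=1}^{mN}d_j\mathcal{L}_j$ constructed in Lemma~\ref{lemmaslsa}. Using the hypothesis $\mathcal{L}_j(p)=0$ for every $j$, linearity gives
\begin{equation*}
\mathcal{G}(p)=\sum_{j=1}^{mN}d_j\,\mathcal{L}_j(p)=0.
\end{equation*}
On the other hand, since $\mathcal{G}$ annihilates $\mathcal{S}_m^0(\partial K)$, we have $\mathcal{G}(p_0)=0$, whereas by~\eqref{dsaaa}, $\mathcal{G}\bigl(\widetilde{f}\,\bigr)=d_1\neq 0$. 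Therefore
\begin{equation*}
0=\mathcal{G}(p)=\mathcal{G}(p_0)+\alpha\,\mathcal{G}\bigl(\widetilde{f}\,\bigr)=\alpha\, d_1,
\end{equation*}
which forces $\alpha=0$, and hence $p=p_0\in \mathcal{S}_m^0(\partial K)$.

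Once $p$ is known to lie in the unenriched space, the remaining hypotheses $\mathcal{L}_j(p)=0$ for $j=1,\ldots,mN$ and $\mathcal{F}(p)=0$ reduce exactly to the situation treated by Lemma~\ref{fffffff}, from which we conclude $p=0$. This proves unisolvence and hence that $\widetilde{\mathcal{A}}_{m,N}$ defines a finite element. The argument is essentially algebraic once $\mathcal{G}$ and $\widetilde{f}$ are in place; the only delicate point is the decoupling step, and the whole construction of $\mathcal{G}$ from $\ker(A^T)$ and the normalization $d_1\neq 0$ was designed precisely so that $\mathcal{G}\bigl(\widetilde{f}\,\bigr)\neq 0$, making the isolation of $\alpha$ immediate. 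I would therefore expect no genuine obstacle, provided the assumed extensions of $\mathcal{L}_j$ and $\mathcal{F}$ to $\widetilde{\mathcal{S}}^0_m(\partial K)$ remain linear, which is exactly the standing hypothesis preceding the statement.
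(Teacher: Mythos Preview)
Your proof is correct and follows essentially the same approach as the paper's own argument: decompose along the direct sum, use the functional $\mathcal{G}$ (which annihilates $\mathcal{S}_m^0(\partial K)$ but not $\widetilde{f}$) to force $\alpha=0$, and then invoke Lemma~\ref{fffffff} to conclude. The only cosmetic addition is your explicit mention of the dimension count, which the paper leaves implicit.
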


\begin{proof}
Let $f \in \widetilde{\mathcal{S}}^0_m(\partial K)$ be such that
\begin{equation}\label{condaa}
\mathcal{L}_j(f) = 0, \quad \mathcal{F}(f) = 0, \quad j = 1,\ldots, mN.
\end{equation}
By definition, we can write
\begin{equation*}
f = p + \alpha \widetilde{f}, \quad p \in  \mathcal{S}_m^0(\partial  K), \quad \alpha \in \mathbb{R}.
\end{equation*}
Since $\mathcal{G}$ is a linear combination of the functionals $\mathcal{L}_j$, $j=1,\ldots,mN$, condition~\eqref{condaa} implies that
\begin{equation*}
\mathcal{G}(f) =\sum_{j=1}^{mN} d_j \mathcal{L}_j(f)=0.
\end{equation*}
On the other hand, since $\mathcal{G}\bigl(\widetilde{f}\bigr) \neq 0$ by~\eqref{dsaaa}, using Lemma~\ref{lemmaslsa} and the linearity of $\mathcal{G}$, we obtain
\begin{equation*}
0 = \mathcal{G}(f) = \mathcal{G}(p) + \alpha \mathcal{G}\bigl(\widetilde{f}\bigr) = \alpha \mathcal{G}\bigl(\widetilde{f}\bigr),
\end{equation*}
which implies that $\alpha = 0$. Therefore,
\begin{equation*}
f = p \in  \mathcal{S}_m^0(\partial  K).
\end{equation*}
By Lemma~\ref{fffffff}, the only function $f\in  \mathcal{S}_m^0(\partial  K)$ satisfying
\begin{equation*}
    \mathcal{L}_j(f)=0, \quad \mathcal{F}(f)=0, \quad j=1,\dots,mN,
\end{equation*}
is $f=0$.  This proves the unisolvence of the triple $\widetilde{\mathcal{A}}_{m,N}$. The proof is completed.  
\end{proof}

\section{Families of nonconforming finite elements}\label{sec3}

Let $K \subset \mathbb{R}^2$ be a polygon with $N$ edges. We denote by
\begin{equation*}
    \left(K, \mathbb{V}_m(K), \Sigma_K \right)
\end{equation*}
the standard Lagrange finite element of degree $m \in \mathbb{N}$, where $\mathbb{V}_m(K)$ denotes a suitable polynomial space on $K$, depending on its geometry, and 
\begin{equation*}
\Sigma_K=\left\{\mathcal{I}_r \, :\, r=1,\ldots,R\right\},    
\end{equation*}
is the associated set of degrees of freedom~\cite{Guessab:2022:SAB}. We define the subspace of functions vanishing on the boundary as
\begin{equation*}
    \mathbb{V}_{m,0}(K)= \mathbb{V}_m(K) \cap H_0^1(K).
\end{equation*}
Since $\mathbb{V}_{m,0}(K)$ has lower dimension, there exists a proper subset 
\begin{equation*}
\Sigma_{K,0}=\left\{\mathcal{I}_s^{\prime} \, :\, s=1,\ldots,S\right\} \subset \Sigma_K, \quad S<R,    
\end{equation*}
such that the triple
\begin{equation}\label{aus211}
    \left(K, \mathbb{V}_{m,0}(K), \Sigma_{K,0} \right)
\end{equation}
defines a finite element.
\smallskip

To extend the finite element $\mathcal{A}_{m,N}$, defined on the boundary $\partial K$, to the full domain $K$, we distinguish two cases:
\begin{itemize}
    \item \textit{If $\mathcal{A}_{m,N}$ defines a finite element}, then we set
    \begin{equation*}
        \widehat{\mathbb{V}}_m(K) = \mathbb{V}_{m,0}(K)\oplus  \mathcal{S}_m^0(\partial  K), \quad \widehat{\Sigma}_K = \Sigma_{K,0} \cup \Sigma_{\partial K},
    \end{equation*}
    where $\Sigma_{\partial K}$ is defined in~\eqref{partialK}.
    \vspace*{2mm}

    \item \textit{If $\mathcal{A}_{m,N}$ does not define a finite element}, then we set
    \begin{equation*}
        \widehat{\mathbb{V}}_m(K) = \mathbb{V}_{m,0}(K)\oplus  \mathcal{S}_m^0(\partial  K)\oplus \bigl\{\widetilde{f}\bigr\}, \quad  \widehat{\Sigma}_K = \Sigma_{K,0} \cup \Sigma_{\partial K} \cup \{ \mathcal{F} \},
    \end{equation*}
    where $\mathcal{F}$ is the functional defined in~\eqref{deff} and $\widetilde{f}$ is the enrichment function defined in~\eqref{fhat}.
\end{itemize}

In both cases, we define
\begin{equation*}
\widehat{\mathcal{A}}_{m,N}=\bigl(K, \widehat{\mathbb{V}}_m(K), \widehat{\Sigma}_K\bigr).
\end{equation*}
Finally, the following important theorem can be proved.

\begin{theorem}
The triple $\widehat{\mathcal{A}}_{m,N}$ defines a finite element.
\end{theorem}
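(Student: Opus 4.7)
The plan is to reduce the unisolvence of the composite triple $\widehat{\mathcal{A}}_{m,N}$ to the two unisolvences already at our disposal: that of the interior Lagrange triple $(K,\mathbb{V}_{m,0}(K),\Sigma_{K,0})$ from~\eqref{aus211} and that of the boundary triple $\mathcal{A}_{m,N}$ (in the first case) or $\widetilde{\mathcal{A}}_{m,N}$ (in the second case, established in Section~\ref{sec2}). First I would verify the dimension count: in the first case, $\dim\bigl(\widehat{\mathbb{V}}_m(K)\bigr)=\dim\bigl(\mathbb{V}_{m,0}(K)\bigr)+\dim\bigl(\mathcal{S}_m^0(\partial K)\bigr)=S+mN=\operatorname{card}\bigl(\widehat{\Sigma}_K\bigr)$, and in the second case one extra dimension from $\widetilde{f}$ is matched by the extra functional $\mathcal{F}$, so the equality persists. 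This reduces the theorem to proving the injectivity of the evaluation map $f\mapsto\bigl(\phi(f)\bigr)_{\phi\in\widehat{\Sigma}_K}$.

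Next, I would take $f\in\widehat{\mathbb{V}}_m(K)$ such that $\phi(f)=0$ for every $\phi\in\widehat{\Sigma}_K$ and decompose $f$ according to the direct sum structure. In the first case, write $f=f_0+g$ with $f_0\in\mathbb{V}_{m,0}(K)$ and $g\in\mathcal{S}_m^0(\partial K)$; since $f_0$ vanishes on $\partial K$, the boundary trace of $f$ coincides with $g$. Because each $\mathcal{L}_j\in\Sigma_{\partial K}$ is defined via the edge restrictions $\widetilde{\gamma}_i$, we have $\mathcal{L}_j(f)=\mathcal{L}_j(g)=0$ for $j=1,\dots,mN$, so the unisolvence of $\mathcal{A}_{m,N}$ yields $g=0$. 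Consequently $f=f_0\in\mathbb{V}_{m,0}(K)$, and the interior conditions $\mathcal{I}_s'(f_0)=0$ combined with the unisolvence of $(K,\mathbb{V}_{m,0}(K),\Sigma_{K,0})$ force $f_0=0$, giving $f=0$.

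In the second case I would follow the same scheme, writing $f=f_0+g+\alpha\widetilde{f}$ with $f_0\in\mathbb{V}_{m,0}(K)$, $g\in\mathcal{S}_m^0(\partial K)$, $\alpha\in\mathbb{R}$. Since the boundary functionals $\mathcal{L}_j$ and $\mathcal{F}$ depend only on the boundary trace of $f$, which equals $g+\alpha\widetilde{f}\in\widetilde{\mathcal{S}}_m^0(\partial K)$, the vanishing of $\mathcal{L}_1(f),\dots,\mathcal{L}_{mN}(f),\mathcal{F}(f)$ together with the unisolvence of $\widetilde{\mathcal{A}}_{m,N}$ proved in Section~\ref{sec2} produces $g=0$ and $\alpha=0$. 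We are then reduced to $f=f_0\in\mathbb{V}_{m,0}(K)$ with all interior degrees of freedom vanishing, and again $f_0=0$.

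The main obstacle I expect is justifying the decoupling on which the whole argument rests: namely, that the boundary functionals $\mathcal{L}_j,\mathcal{F}$ see nothing beyond the boundary trace, and that the interior functionals $\mathcal{I}_s'$ annihilate the boundary and enrichment components. Concretely, this requires specifying how elements of $\mathcal{S}_m^0(\partial K)$ and the enrichment function $\widetilde{f}$ are extended to $K$ inside the direct sum, and checking that this extension is compatible with the interior nodal evaluations in $\Sigma_{K,0}$ (in the standard Lagrange setting, one fixes the interior nodal values to be zero, making $\mathcal{I}_s'$ vanish on every boundary-supported summand). Once this compatibility, together with the validity of the direct sum decomposition $\widehat{\mathbb{V}}_m(K)=\mathbb{V}_{m,0}(K)\oplus\mathcal{S}_m^0(\partial K)\,(\oplus\{\widetilde{f}\})$, is settled, the remainder of the proof is the clean two-step reduction sketched above.
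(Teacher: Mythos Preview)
Your proposal is correct and follows essentially the same route as the paper's proof: decompose $f$ along the direct sum, use that $f_0\in\mathbb{V}_{m,0}(K)$ has zero boundary trace to reduce the boundary conditions to the $\mathcal{S}_m^0(\partial K)$ (or $\widetilde{\mathcal{S}}_m^0(\partial K)$) component, apply the boundary unisolvence to kill that component, and then finish with the interior unisolvence. One small point: the obstacle you flag about $\mathcal{I}_s'$ annihilating the boundary and enrichment summands is not actually needed in this sequential argument, since once $g=0$ (and $\alpha=0$) you already have $f=f_0$ and apply $\mathcal{I}_s'$ to $f_0$ directly---the paper, like your main argument, never uses that decoupling direction.
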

\begin{proof}
We prove the result under the assumption that $\mathcal{A}_{m,N}$ defines a finite element; the other case can be treated analogously. 

Let $f \in \widehat{\mathbb{V}}_m(K)$ be such that
\begin{align*}
\mathcal{L}_j(f) &= 0, \quad \mathcal{L}_j\in\Sigma_{\partial K}, \quad  j = 1, \dots, mN, \\[2mm]
\mathcal{I}^{\prime}_s(f)&=0,  \quad \mathcal{I}^{\prime}_s\in\Sigma_{K,0}, \quad s=1,\dots,S.
\end{align*}
By definition of $\widehat{\mathbb{V}}_m(K)$, there exist $g_0 \in \mathbb{V}_{m,0}(K)$ and $p \in  \mathcal{S}_m^0(\partial  K)$ such that
\begin{equation*}
f = g_0+p.
\end{equation*}
Since $g_0$ vanishes on $\partial K$, by using the linear functionals of $\Sigma_{\partial K}$, we have
\begin{equation*}
0=\mathcal{L}_j(f)=\mathcal{L}_j\left(g_0\right)+\mathcal{L}_j(p)=\mathcal{L}_j(p), \quad j=1,\dots,mN.
\end{equation*}
As $\mathcal{A}_{m,N}$ defines a finite element, these conditions imply that $p = 0$. Thus 
\begin{equation*}
    f = g_0.
\end{equation*} 
 
Applying now the conditions on the interior degrees of freedom, we obtain
\begin{equation*}
0=\mathcal{I}^{\prime}_s(f)=\mathcal{I}_s^{\prime}\left(g_0\right), \quad s=1,\dots, S.
\end{equation*}
Since~\eqref{aus211} defines a finite element, it results that $g_0 = 0$, and therefore $f = 0$. 
This proves that 
\begin{equation*}
   \widehat{\mathcal{A}}_{m,N}=\bigl(K, \widehat{\mathbb{V}}_m(K), \widehat{\Sigma}_K\bigr)
\end{equation*}
defines a finite element over $K$.  
\end{proof}

\section{Numerical results}
\label{sec4}

In this section, we present numerical experiments aimed at assessing the performance and accuracy of the proposed nonconforming approximation methods. All tests are performed on the square domain $\Omega = [-1,1]^2$, using a set of benchmark functions that exhibit different analytic behaviors. The test functions considered are:
 \begin{eqnarray*}
    f_1(x,y)&=&\sqrt{x^2+y^2}, \ \ f_2(x,y)=e^{-4\left(x^2+y^2\right)}\sin\left(\pi(x+y)\right),\\
    f_3(x,y)&=&\sin(2\pi x)\sin(2\pi y),\ \, 
    f_4(x,y)=\sin\left(4\pi(x+y)\right),\ \,
    f_5(x,y)=\frac{1}{25(x^2+y^2)+1},\\ 
 f_6(x,y)&=&0.75\exp\biggl(-\frac{(9(x+1)/2-2)^2}{4}-\frac{(9(y+1)/2-2)^2}{4}\biggr)\\[-2pt]
 &&+0.75\exp\biggl(-\frac{(9(x+1)/2+1)^2}{49}-\frac{(9(y+1)/2+1)}{10}\biggr)\\[-2pt]
	&&+ 0.5\exp\biggl(-\frac{(9(x+1)/2-7)^2}{4}-\frac{(9(y+1)/2-3)^2}{4}\biggr)\\
	&&-0\widehat.2\exp\biggl(-(9(x+1)/2-4)^2-(9(y+1)/2-7)^2\biggr).
\end{eqnarray*}
The function $f_6$ is the well-known Franke function, widely used as a benchmark for approximation methods~\cite{franke1982scattered}.

  The proposed nonconforming approximation schemes are implemented on both triangular and quadrilateral meshes.  All experiments are carried out using the \texttt{Matlab} software. The degrees of freedom are computed using high-order Gaussian quadrature rules to ensure adequate numerical precision. For quadrilateral elements, the approximation spaces are constructed using scaled Legendre polynomial bases, generated via the \texttt{legpoly} routine from the Chebfun package~\cite{hale2014fast}.

\subsection{Nonconforming approximation methods using triangular finite elements}\label{ssec5.1}

We start by considering the family of regular Friedrichs--Keller triangulations~\cite{Knabner}, denoted by
$$\mathcal{T}_{n}^{\mathrm{FK}}=\left\{t_i\,:\, i=1,\dots,2(n+1)^2\right\},$$ which consists of $2(n+1)^2$ triangular elements (see Fig.~\ref{triang}).

We compare the behavior of the approximation errors in $L^1$-norm produced by the nonconforming approximation methods $\widehat{\mathcal{A}}_{1,3}$ and $\widehat{\mathcal{A}}_{2,3}$ across different triangular meshes, both constructed using Jacobi orthogonal polynomials on $[-1,1]$ with parameters $\alpha = 3$ and $\beta = 2$. The results, shown in Fig.~\ref{triangL11}, demonstrate the expected error decay, with $\widehat{\mathcal{A}}_{2,3}$ exhibiting superior accuracy on finer meshes.
\begin{figure}[htb]
    \centering
    \includegraphics[width=\linewidth]{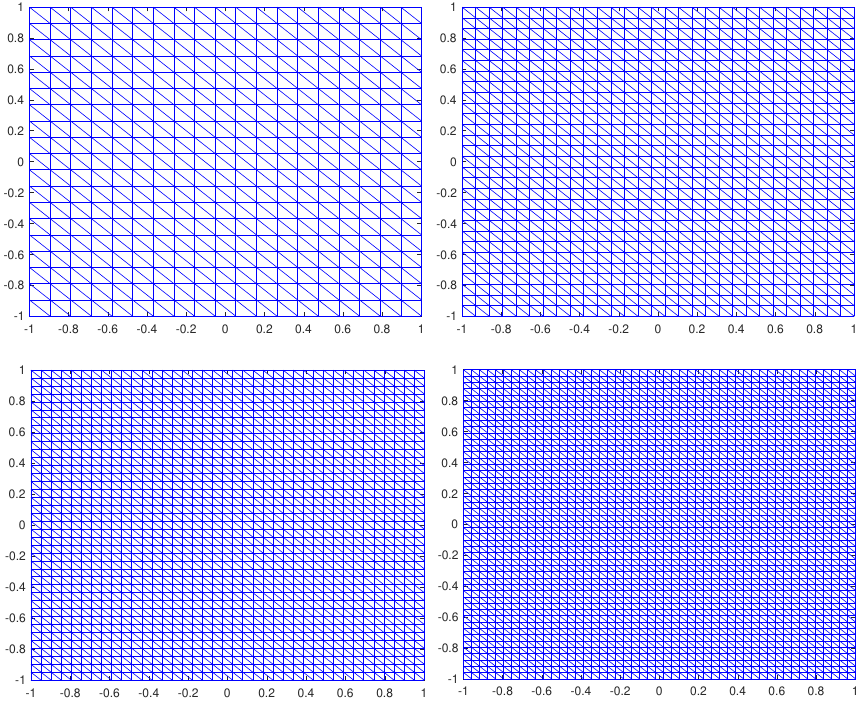}
  \caption{Regular Friedrichs--Keller triangulations $\mathcal{T}_n^{\mathrm{FK}}$, for $n = 20, 30, 40, 50$.}\label{triang}
\end{figure}

Next, we examine the influence of different Jacobi parameters. To this end, in Table~\ref{tab1}, we report the errors in $L^1$-norm produced by the methods $\widehat{\mathcal{A}}_{1,3}$ and $\widehat{\mathcal{A}}_{2,3}$ on the triangulation $\mathcal{T}_{30}^{\mathrm{FK}}$, using different parameters $\alpha,\beta$. (Numbers in parentheses indicate decimal exponents.)
The results indicate that while the errors are comparable across the tested values, no single pair $(\alpha,\beta)$ performs optimally for all test functions. This suggests that, in practical applications, the choice of Jacobi parameters may be tailored to the specific features of the target function.
\begin{figure}
    \centering
    \includegraphics[width=0.95\linewidth]{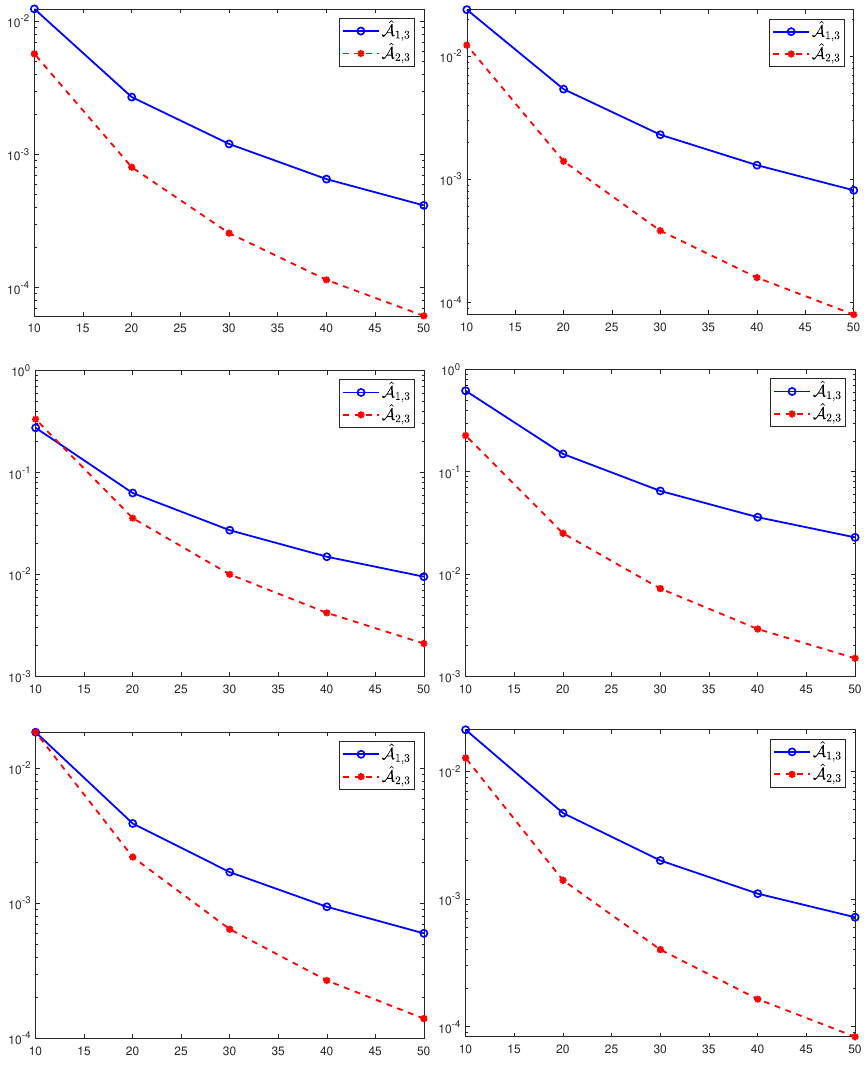}
    \caption{Convergence of the approximation error in $L^1$-norm for the functions $f_1$ -- $f_6$ produced by the nonconforming methods  $\widehat{\mathcal{A}}_{1,3}$ (blue line) and $\widehat{\mathcal{A}}_{2,3}$ (red dashed line), obtained using the Jacobi orthogonal polynomials with parameters $\alpha=3$ and $\beta=2$. The errors are computed over Delaunay triangulations with an increasing number of triangles. The plots are ordered  as follows:  
$f_1$ (top left), $f_2$ (top right),  
$f_3$ (middle left), $f_4$ (middle right),  
$f_5$ (bottom left), and $f_6$ (bottom right).}
    \label{triangL11}
\end{figure}

To further illustrate the behavior of the methods, in Fig.~\ref{f1lin} we present visual reconstructions of the test functions $f_1$ -- $f_6$, produced by $\widehat{\mathcal{A}}_{1,3}$ and $\widehat{\mathcal{A}}_{2,3}$ on the triangulation $\mathcal{T}_{30}^{\mathrm{FK}}$, using Jacobi polynomials with $\alpha = 3$ and $\beta = 2$. These values are not chosen to be \emph{optimal} in terms of minimizing the approximation error. Rather, numerical experiments show that the error initially depends on the choice of Jacobi parameters, but tends to stabilize once the parameters become sufficiently large. Therefore, the values $\alpha = 3$ and $\beta = 2$ are used as representative parameters beyond which the approximation quality remains essentially unchanged.
\begin{table}[h!] 
\caption{Approximation error in $L^1$-norm produced by the nonconforming methods  $\widehat{\mathcal{A}}_{1,3}$ and $\widehat{\mathcal{A}}_{2,3}$ on the triangulation $\mathcal{T}^{\mathrm{FK}}_{30}$, obtained using the Jacobi orthogonal polynomials on $[-1,1]$ with different parameters $\alpha,\beta$.}
 {\tabcolsep 2pt\renewcommand{\arraystretch}{1.1}
\begin{tabular}{c|ccccc|ccccc}
\hline
\multicolumn{1}{c|}{} & \multicolumn{5}{c|}{\(\widehat{\mathcal{A}}_{1,3}\)} & \multicolumn{5}{c}{\(\widehat{\mathcal{A}}_{2,3}\)} \\
\cline{2-6}\cline{7-11} 
$(\alpha,\beta)$ & $(1/2,1)$ & $(1,3/2)$ & $(2,1)$ & $(0,2)$ & $(3,2)$ & $(1/2,1)$ & $(1,3/2)$ & $(2,1)$ & $(0,2)$ & $(3,2)$ \\
\hline
$f_1$ & $1.2(-3)$ & $1.2(-3)$ & $1.2(-3)$ & $1.7(-3)$ & $1.2(-3)$ & $2.7(-4)$  & $3.2(-4)$ & $2.0(-4)$  & $1.1(-4)$ & $2.5(-4)$ \\
$f_2$ & $2.3(-3)$ & $2.4(-3)$  & $2.4(-3)$ & $3.6(-3)$  & $2.3(-3)$ & $4.3(-4)$ & $5.1(-4)$ & $3.0(-4)$ & $1.9(-4)$ & $3.8(-4)$ \\
$f_3$ & $2.8(-2)$ & $2.7(-2)$ & $2.8(-2)$ & $4.1(-2)$  & $2.7(-2)$ & $1.2(-2)$ & $1.4(-2)$ & $7.6(-3)$  & $4.0(-3)$ & $9.9(-3)$\\
$f_4$ & $6.8(-2)$ & $6.5(-2)$ & $6.9(-2)$  & $1.0(-1)$ & $6.5(-2)$ & $7.6(-3)$ & $7.3(-3)$ & $7.5(-3)$ & $8.2(-3)$ & $7.2(-3)$ \\
$f_5$ & $1.6(-3)$ & $1.6(-3)$ & $1.7(-3)$ & $2.6(-3)$ & $1.7(-3)$ & $7.3(-4)$ & $8.8(-4)$ & $5.0(-4)$ & $2.8(-4)$ & $6.4(-4)$\\
$f_6$ & $2.1(-3)$ & $2.0(-3)$ & $2.1(-3)$ & $3.2(-3)$ & $2.0(-3)$ & $4.5(-4)$ & $5.4(-4)$ & $3.1(-4)$ & $1.9(-4)$ & $4.0(-4)$  \\
\hline
\end{tabular}}
\label{tab1}
\end{table}

\begin{figure}
    \centering
    \includegraphics[width=0.85\linewidth]{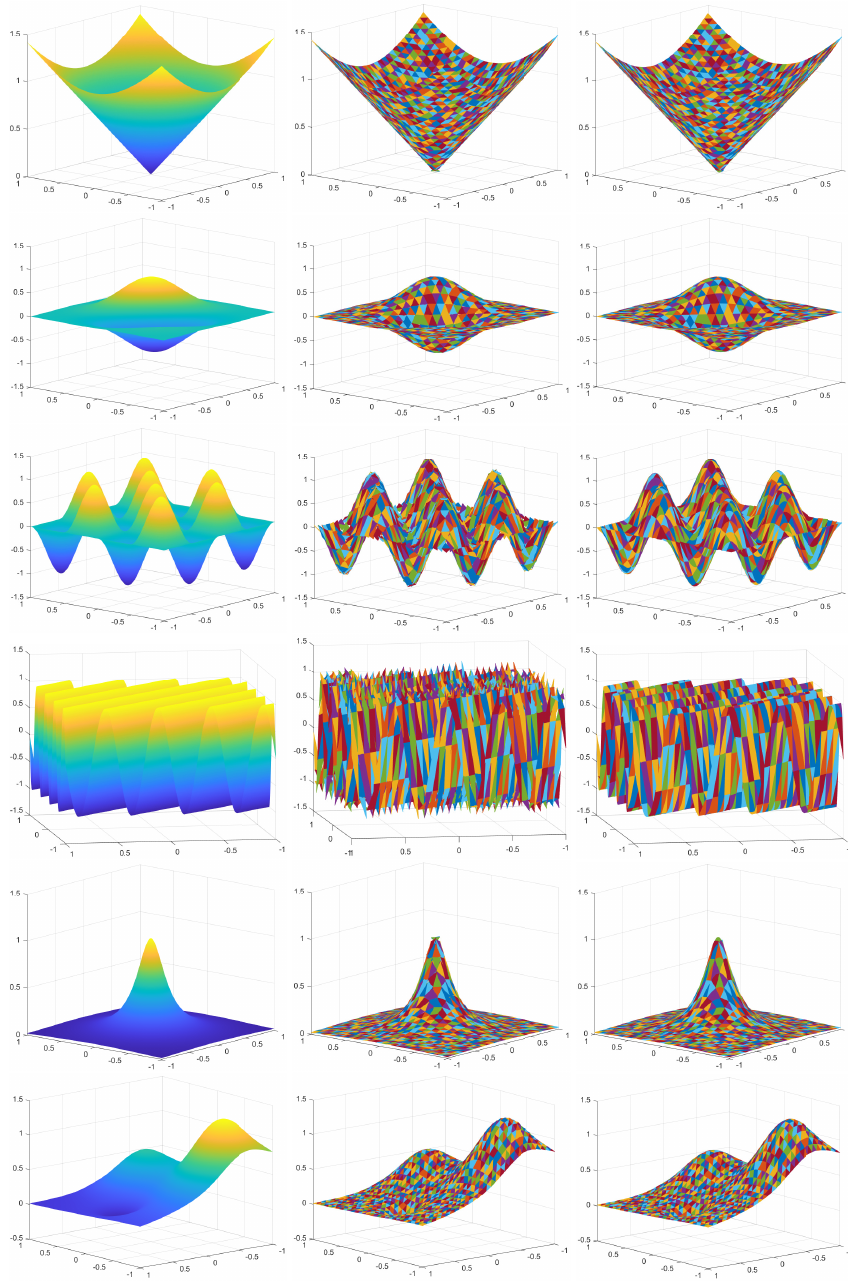}
    \caption{From top to bottom: plots of the functions $f_1, \ldots, f_6$ (left), and their reconstructions using the nonconforming methods  $\widehat{\mathcal{A}}_{1,3}$ (center) and $\widehat{\mathcal{A}}_{2,3}$ (right).}
    \label{f1lin}
\end{figure}

\subsection{Nonconforming approximation methods using quadrilateral finite elements}

We now consider uniform Cartesian partitions 
$\mathcal{Q}_{n}=\left\{s_i\,:\, i=1,\ldots,(n-1)^2\right\}$
each consisting of $(n-1)^2$ square elements; see Fig.~\ref{quad}. 
\begin{figure}
    \centering
    \includegraphics[width=\linewidth]{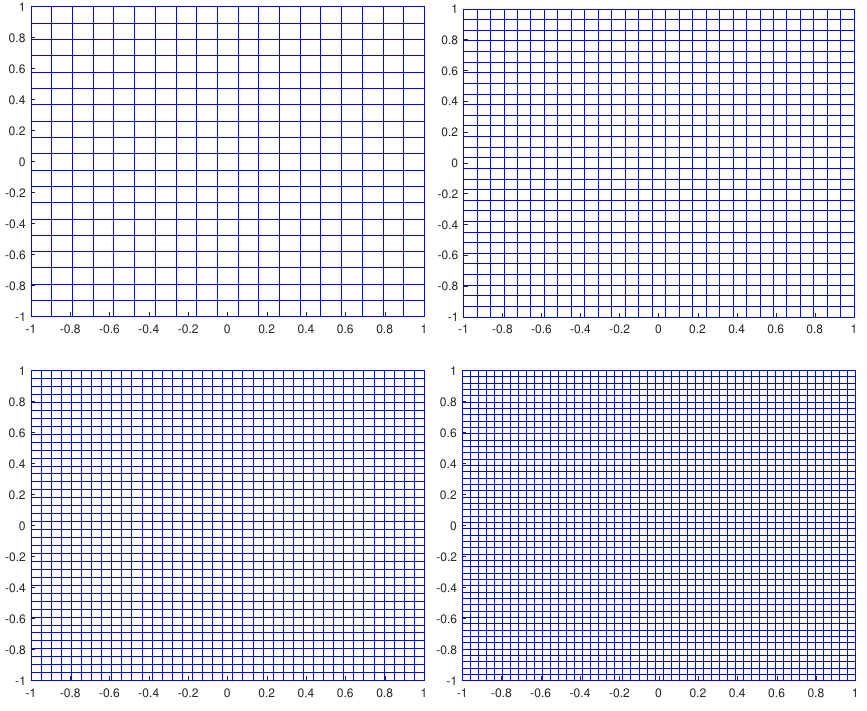}
    \caption{Successive uniform Cartesian partitions $\mathcal{Q}_{n}$, with $n=20,30,40,50$.}
    \label{quad}
\end{figure}
\begin{table}[h!] 
\caption{Approximation error in $L^1$-norm produced by the nonconforming methods  $\widehat{\mathcal{A}}_{1,4}$ and $\widehat{\mathcal{A}}_{2,4}$ on the partition $\mathcal{Q}_{30}$, obtained using the Jacobi orthogonal polynomials on $[-1,1]$ with different parameters $\alpha,\beta$.}
 {\tabcolsep 2pt\renewcommand{\arraystretch}{1.1}
\begin{tabular}{c|ccccc|ccccc}
\hline
\multicolumn{1}{c|}{} & \multicolumn{5}{c|}{\(\widehat{\mathcal{A}}_{1,4}\)} & \multicolumn{5}{c}{\(\widehat{\mathcal{A}}_{2,4}\)} \\
\cline{2-6}\cline{7-11} 
$(\alpha,\beta)$ & $(1/2,1)$ & $(1,3/2)$ & $(2,1)$ & $(0,2)$ & $(3,2)$ & $(1/2,1)$ & $(1,3/2)$ & $(2,1)$ & $(0,2)$ & $(3,2)$ \\
\hline
$f_1$ & $2.4(-3)$ & $2.9(-3)$ & $2.0(-3)$ & $1.7(-3)$ & $2.5(-3)$ & $5.3(-5)$  & $5.3(-5)$ & $5.1(-5)$  & $5.2(-5)$ & $5.1(-5)$ \\
$f_2$ & $6.6(-3)$ & $7.9(-3)$  & $5.5(-3)$ & $4.5(-3)$  & $6.8(-3)$ & $1.5(-4)$ & $1.5(-4)$ & $1.5(-4)$ & $1.5(-4)$ & $1.4(-4)$ \\
$f_3$ & $2.3(-2)$ & $2.2(-2)$ & $2.2(-2)$ & $2.8(-2)$  & $2.1(-2)$ & $1.0(-3)$ & $1.0(-3)$ & $9.8(-4)$ & $1.0(-3)$ & $9.4(-4)$\\
$f_4$ & $1.4(-1)$ & $1.4(-1)$ & $1.4(-1)$  & $1.8(-1)$ & $1.3(-1)$ & $1.0(-2)$ & $1.0(-2)$ & $9.9(-3)$ & $1.0(-2)$ & $9.6(-3)$ \\
$f_5$ & $3.5(-3)$ & $4.5(-3)$ & $2.9(-3)$ & $1.9(-3)$ & $3.8(-3)$ & $1.1(-4)$ & $1.0(-4)$ & $1.0(-4)$ & $1.0(-4)$ & $9.8(-5)$\\
$f_6$ & $4.4(-3)$ & $5.3(-3)$ & $3.6(-3)$ & $2.7(-3)$ & $4.6(-3)$ & $9.5(-5)$ & $9.2(-5)$ & $8.9(-5)$ & $9.3(-5)$ & $8.6(-5)$\\
\hline
\end{tabular}}
\label{tab2}
\end{table}

\begin{figure}
    \centering
    \includegraphics[width=0.95\linewidth]{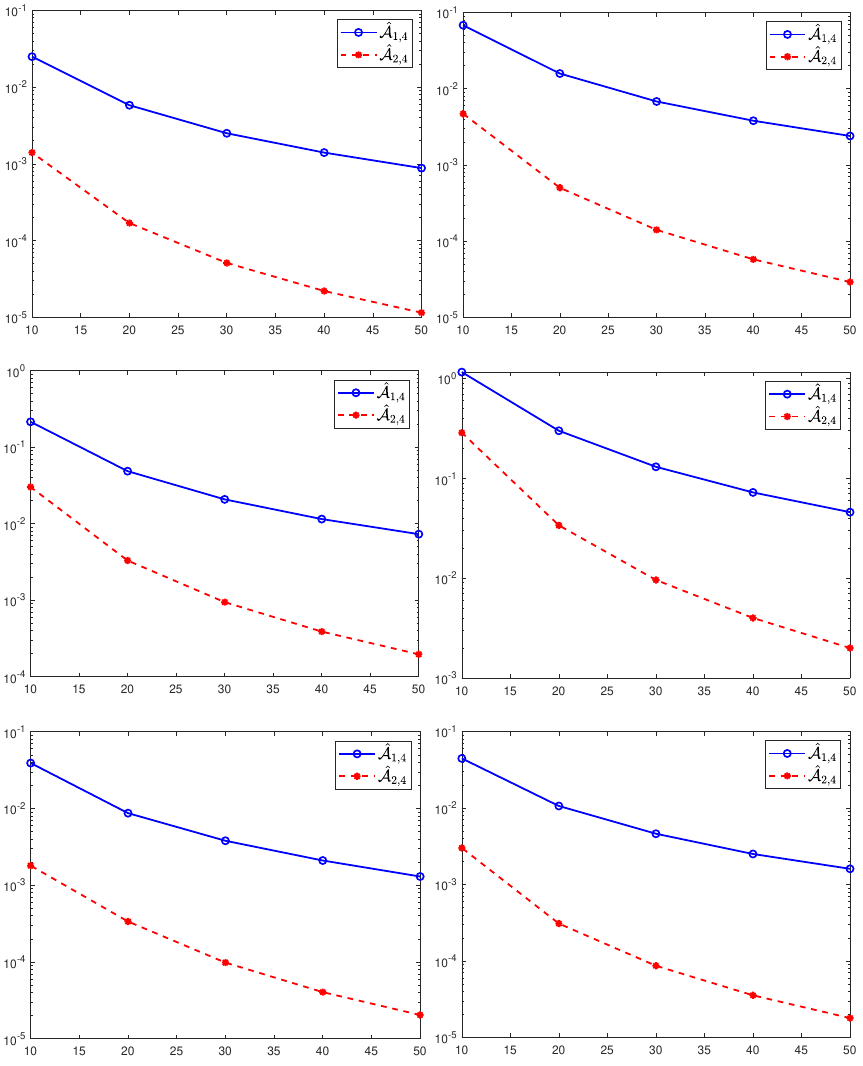}
    \caption{Convergence of the approximation error in $L^1$-norm for the functions $f_1$ -- $f_6$ produced by the nonconforming methods  $\widehat{\mathcal{A}}_{1,4}$ (blue line) and $\widehat{\mathcal{A}}_{2,4}$ (red dashed line) obtained using the Jacobi orthogonal polynomials with parameters $\alpha=3$ and $\beta=2$. The errors are computed over Cartesian partitions with an increasing number of squares. The plots are ordered  as follows:  
$f_1$ (top left), $f_2$ (top right),  
$f_3$ (middle left), $f_4$ (middle right),  
$f_5$ (bottom left), and $f_6$ (bottom right).}
    \label{quad1}
\end{figure}

\begin{figure}
    \centering
    \includegraphics[width=0.85\linewidth]{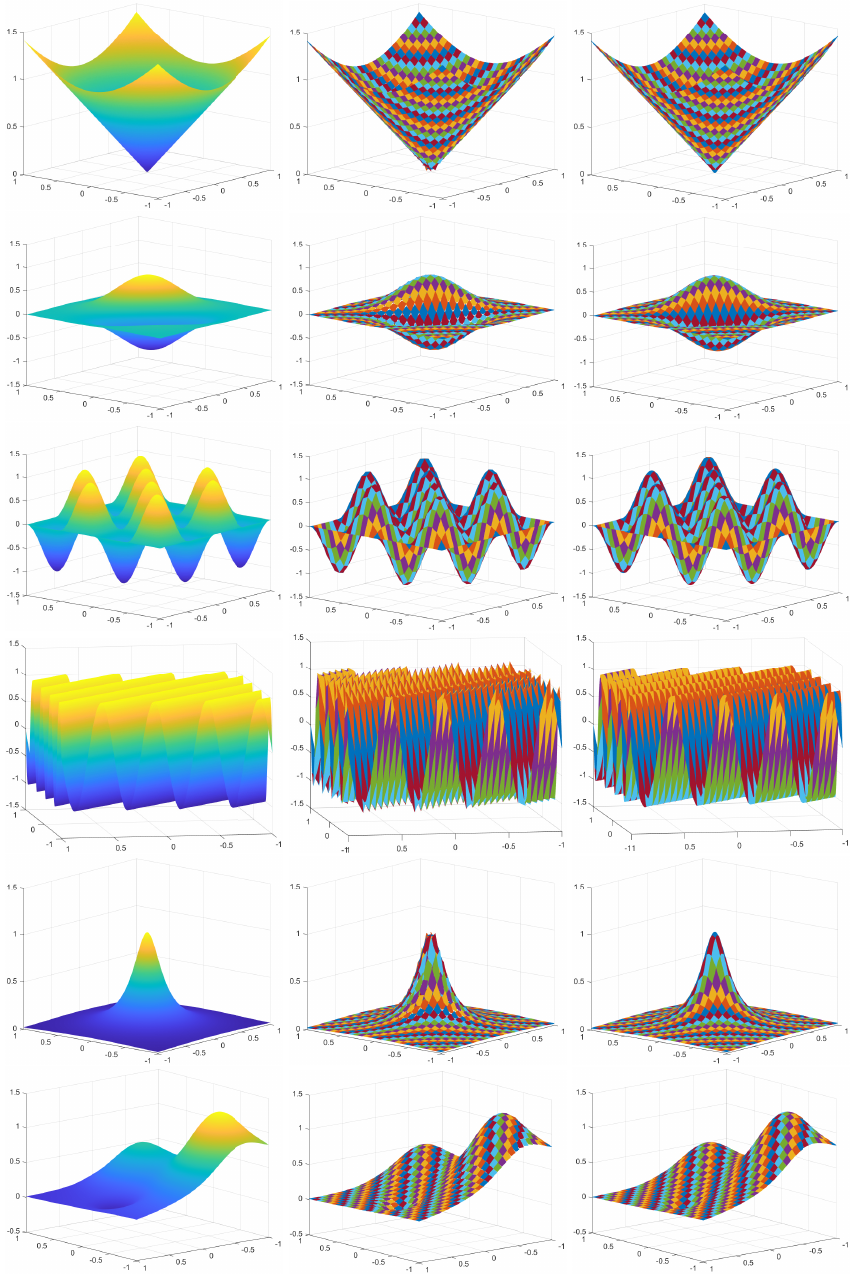}
    \caption{From top to bottom: plots of the functions $f_1, \ldots, f_6$ (left), and their reconstructions using the nonconforming methods  $\widehat{\mathcal{A}}_{1,4}$ (center) and $\widehat{\mathcal{A}}_{2,4}$ (right).}
    \label{f1lins}
\end{figure}

We compare the performance of the nonconforming approximation methods $\widehat{\mathcal{A}}_{1,4}$ and $\widehat{\mathcal{A}}_{2,4}$, both constructed using Jacobi orthogonal polynomials on  $[-1,1]$ with parameters $\alpha = 3$ and $\beta = 2$. As shown in Fig.~\ref{quad1}, both methods exhibit a decreasing trend of the error in $L^1$-norm as the mesh is refined. In analogy with the triangular case, the enriched method $\widehat{\mathcal{A}}_{2,4}$ demonstrates a faster convergence rate. Moreover, in the quadrilateral setting, the difference in accuracy between the two methods becomes more significant with finer meshes.

Next, we examine the influence of different Jacobi parameters. To this end, in Table~\ref{tab2}, we compute the errors in $L^1$-norm 
produced by $\widehat{\mathcal{A}}_{1,4}$ and $\widehat{\mathcal{A}}_{2,4}$ on the partition $\mathcal{Q}_{30}$, using different parameters $\alpha,\beta$. The behavior closely mirrors that observed in the triangular case: the optimal choice of parameters may depend on the specific properties of the target function. 

Finally, to better highlight the efficiency of the proposed method, in Fig.~\ref{f1lins} we show the reconstructions of the functions $f_1$ -- $f_6$ produced by $\widehat{\mathcal{A}}_{1,4}$ and $\widehat{\mathcal{A}}_{2,4}$ on the partition $\mathcal{Q}_{30}$, again using the Jacobi polynomials with parameters $\alpha = 3$ and $\beta = 2$.


\section{Conclusions and Future Work}\label{sec5}
In this paper, we have introduced new families of nonconforming approximation schemes capable of reproducing polynomials of degree~$m$ on general polygonal meshes. The construction is based on degrees of freedom defined through weighted moments of orthogonal polynomials, making the framework particularly suitable for function reconstruction problems where only integral data are available and pointwise values are inaccessible.  A central theoretical contribution is the complete unisolvence analysis, which establishes that the well-posedness of the approximation space depends on the parity of the product between the polynomial degree~$m$ and the number of polygon edges~$N$. In the degenerate cases where unisolvence fails, we have proposed an enrichment strategy involving an additional linear functional and a suitably designed enrichment function. This guarantees that the interpolation scheme remains unisolvent for all combinations of $m$ and $N$. Numerical experiments conducted on representative test cases confirm both the accuracy and robustness of the proposed method. The flexibility of using general orthogonal polynomials and custom weight functions enables the design of application-specific reconstruction schemes, particularly when prior structural information on the data is available. Several families of nonclassical orthogonal polynomials, such as certain sequences of associated Jacobi polynomials, also satisfy the first condition of Theorem~1. Their structural properties and potential benefits in reconstruction contexts will be the subject of a forthcoming study. Future directions include the extension of the framework to three-dimensional polyhedral meshes, the treatment of time-dependent reconstruction problems, and the use of nonclassical or data-adapted weight functions to further enhance accuracy in applications such as signal processing and inverse problems.


\section*{Funding}
This research has been achieved as part of RITA ``Research
 ITalian network on Approximation'' and as part of the UMI group ``Teoria dell'Approssimazione
 e Applicazioni''. The research was supported by GNCS-INdAM 2025 project ``Polinomi, Splines e Funzioni Kernel: dall'Approssimazione Numerica al Software Open-Source''. The work of F. Nudo has been funded by the European Union NextGenerationEU under the Italian National Recovery and Resilience Plan (PNRR), Mission 4, Component 2, Investment 1.2 ``Finanziamento di progetti presentati da giovani ricercatori'',\ pursuant to MUR Decree No. 47/2025. The research was supported by the grant  Bando Professori visitatori 2022 which has allowed the visit of Prof. Allal Guessab to the Department of Mathematics and Computer Science of the University of Calabria in the spring 2022. The  research of G.V. Milovanovi\'c   was supported in part by the Serbian Academy of Sciences and Arts, Belgrade (Grant No. $\Phi$-96).

 \section*{Conflict of Interest}
The authors declare that they have no conflict of interest.


\end{document}